\setlist{nosep} 
\newcommand{\N}{\mathbb{N}}
\newcommand{\Z}{\mathbb{Z}}
\newcommand{\R}{\mathbb{R}}
\newtheorem{theo}{Theorem}
\newtheorem{prop}{Proposition}[section]
\newtheorem{lem}[prop]{Lemma}
\newtheorem{coro}[prop]{Corollary}
\newtheorem{remark}[prop]{Remark}
\newtheorem{claim}[prop]{Claim}
\newtheorem{defi}[prop]{Definition}
\theoremstyle{plain}
\numberwithin{equation}{section}
\newcommand{\then}{\Longrightarrow} 
\def\t0{\rightarrow 0} 
\def\ti{\rightarrow \infini} 
\newcommand{\f}{\frac}
\newcommand{\infini}{\infty}
\newcommand{\ep}{\varepsilon}
\newcommand{\tens}{\otimes}
\newcommand{\hal}{\frac{1}{2}}
\def\div{\mathrm{div} \, } 
\def\1{\mathbf{1}} 
\def \mc{\mathcal}
\def \ep{\epsilon}
\renewcommand{\epsilon}{\varepsilon}
\def\Zd{\Z^d} 
\def\Supp{\Sigma} 
\def\meseq{\mu_{V}} 
\def \ZNbeta{Z_{N,\beta}} 
\def \um{\underline{m}} 
\def \om{\overline{m}} 
\def\({\left(}
\def\){\right)}
\def\yg{|y|^\gamma}
\def \W{\mathbb{W}} 
\def\tW{\widetilde{\mc{W}}} 
\def\ttW{\widetilde{\W}} 
\def \bttW{\overline{\W}} 
\def \WN{w_N} 
\def\config{\mathrm{Config}} 
\def\Elec{\mathrm{Elec}}
\def\Lip{\mathrm{Lip}_1} 
\def\Loc{\mathrm{Loc}}
\def\dconfig{d_{\config}}
\def \probas{\mathcal{P}}
\def \Pelec{P^{\mathrm{elec}}} 
\def\P{\mathbb{P}} 
\def \Pst{P} 
\def \bPst{\bar{P}} 
\def \Pgot{\mathfrak{P}}
\def \bPgot{\overline{\Pgot}}
\def \PNbeta{\P_{N, \beta}} 
\def \PgNbeta{\mathbf{P}_{N,\beta}} 
\def \PgN2{\mathbf{P}_{N,2}} 
\def \QN{\mathbb{Q}_{N,\beta}} 
\def \bQpN{\bar{\mathfrak{Q}}_{N,\beta}} 
\def \sineb{\mathrm{Sine}_{\beta}} 
\def \bsineb{\overline{\sineb}} 
\def \HN{\mathcal{H}_N}
\def\Esp{\mathbf{E}} 
\def \Ent{\mathrm{Ent}}   
\def \ERS{\mathsf{ent}} 
\def \bERS{\overline{\mathsf{ent}}} 
\def \Leb{\mathbf{Leb}}
\def \Poisson{\mathbf{\Pi}}
\def \B{\mathbf{B}} 
\def \bR{\bar{R}} 
\def \bS{\bar{S}}
\def \conf{\mathrm{Conf}} 
\def \cc{c_1}
\def \dist{d}
\def \dist{\mathrm{dist}}
\def\cc{c_1}
\def\cC{c_2}
\renewcommand{\Supp}{\Sigma}
\def\muv{\meseq}
\def\I{\mathcal I}
\def\mr{\mathbb{R}}
\def\nab{\nabla}
\def\indic{\mathbf{1}}
\def\Xint#1{\mathchoice
   {\XXint\displaystyle\textstyle{#1}}%
   {\XXint\textstyle\scriptstyle{#1}}%
   {\XXint\scriptstyle\scriptscriptstyle{#1}}%
   {\XXint\scriptscriptstyle\scriptscriptstyle{#1}}%
   \!\int}
\def\XXint#1#2#3{{\setbox0=\hbox{$#1{#2#3}{\int}$}
     \vcenter{\hbox{$#2#3$}}\kern-.5\wd0}}
\def\dashint{\Xint-}
\def \Op{\mathcal{O}_{R, \eta}}
\newcommandx \Sp{\mathcal{S}^{M,\epsilon}_{R, \eta}}
\newcommandx \So[2][1=e,2=M]{\mathcal{S}^{#2,#1,\epsilon}_{R, \eta,-}}
\renewcommandx \H{F^{M,\epsilon}_{R, \eta}}
\def \carr{\square} 
\def \barcarr{\overline{\carr}}
\def \Int{\mathrm{Int}}
\def \Ext{\mathrm{Ext}}
\def \bM{\dot{\mathfrak{M}}}
\def \HM{\widehat{\mathfrak{M}}}
\def \btW{\overline{\mc{W}}}
\def \Pelec{\Pst^{\mathrm{elec}}}
\def \bPelec{\bPst^{\mathrm{elec}}}
\def \Old{\mathrm{Old}}
\def \New{\mathrm{New}}
\def  \comeg{c_{\omega,\Sigma}}
\def \cmuvum{r_{\mu_V, \um}}
\def\Aabs{A^{\rm{abs}}}
\def \Amod{A^{\rm{mod}}}
\def \Aext{A^{\rm{ext}}}
\def\Cabs{\mathcal{C}^{\rm{abs}}}
\def\Cmod{\mathcal{C}^{\rm{mod}}}
\def \Ctot{\mathcal{C}^{\rm{tot}}}
\def \Creg{\mathcal{C}^{\rm{reg}}}
\def \Cext{\mathcal{C}^{\rm{ext}}} 
\def \Emod{E^{\rm{mod}}}
\def \Eext{E^{\rm{ext}}}
\def \Etot{E^{\rm{tot}}}
\def \Escr{E^{\rm{scr}}}
\def \Ereg{E^{\rm{reg}}}
\def\Egen{E^{\rm{gen}}}
\def \Egene{E_\eta^{\rm{gen}}}
\def \Cgen{\mathcal{C}^{\rm{gen}}}
\def \Cscr{\mathcal{C}^{\rm{scr}}}
\def \Phiscr{\Phi^{\rm{scr}}}
\def \Phireg{\Phi^{\rm{reg}}}
\def \Phimod{\Phi^{\rm{mod}}}
\def \Phigen{\Phi^{\rm{gen}}}
\def\g{\mathsf{g}}
\def \V{V}
\def \I{\mathcal{I}}
\def \LogU{\textsf{Log1}}
\def \LogD{\textsf{Log2}}
\def \Riesz{\textsf{Riesz}}
\def \emp{\mu^{\mathrm{emp}}_N}
\def \XN{\vec{X}_N}
\def \Emp{\mathrm{Emp}}
\def \bEmp{\overline{\Emp}}
\def \C{\mathcal{C}}
\def \bP{\bar{P}}
\def \KNbeta{K_{N, \beta}}
\def \cV{\mathsf{c}_V}
\def \hmuv{\mathsf{h}^{\mu_V}}
\def\namedlabel#1#2{\begingroup
    #2%
    \def\@currentlabel{#2}%
    \phantomsection\label{#1}\endgroup
}
\def \k{\mathsf{k}}
\def \d{\mathsf{d}}
\def \s{\mathsf{s}}
\def \Hmu{H^{\mu}}
\def \f{\mathsf{f}}
\def \p{\mathsf{p}}
\def \hpN{H'_{N}}
\def \hpNe{H'_{N, \eta}}
\def \cds{\mathsf{c}_{\d,\s}} 
\def \c{\cds}
\def\Rd{\R^\d} 
\def \Lploc{L^\p_{\mathrm{loc}}}
\def \drd{\delta_{\Rd}}
\def \ent{\mathrm{ent}}
\def \bPstx{\bar{P}^{x}}
\def \dis{\mathrm{Dis}}
\def \Numb{\mathrm{Num}}
\def \QNbeta{\mathbb{Q}_{N,\beta}}
\def \fbeta{\mathcal{F}^{1}_{\beta}} 
\def \fbarbeta{\overline{\mathcal{F}}^{\meseq}_{\beta}} 
\def \Comp{\mathrm{Comp}}
\def \Screen{\mathrm{Screen}}
\def \tcarr{\check{\carr}} 
\def \bQ{\bar{Q}}
\def \EK{E^{(K)}}
\def \x{\mathsf{x}}
\def \mNR{m_{N,R}}
\def \DiscrAv{\mathrm{DiscrAv}}
\def \ContAv{\mathrm{ContAv}}
\def \Sigmatil{\Sigma'_{\mathrm{til}}}
\def \calA{\mathcal{A}}
\def \Ntil{N_{\mathrm{til}}}
\def \Sigmaum{\Sigma_{\um}}
\def \Gammaj{\Gamma^{(j)}}
\def \Gammapj{\Gamma^{' (j)}}
\def \ll{\ell}
\def \llj{\ell_{j}}
\def \alphaj{\alpha_j}
\def \v{v}
\def \Nn{\mathcal{N}}
\def \Nnint{\mathcal{N}^{\mathrm{int}}}
\def \Dint{\mathcal{D}^{\mathrm{int}}}
\def \bPum{\bar{P}_{\um}}
\def \Intep{\Int_{\epsilon}}
\def \Extep{\Ext_{\epsilon}}
\def \Uj{\mathcal{U}_j}
\begin{document}
\title{Large Deviation Principle  for Empirical Fields of Log and Riesz Gases}
\author{Thomas Lebl\'e and Sylvia Serfaty}
\begin{abstract}
We study a system of $N$ particles with logarithmic, Coulomb or Riesz pairwise interactions, confined by an external potential.  We examine a microscopic quantity, the tagged empirical field, for which we prove a large deviation principle at speed $N$. The rate function is the sum of an entropy term, the specific relative entropy, and an energy term, the renormalized energy introduced in previous works, coupled by the temperature.

We deduce a variational property of the sine-beta processes which arise in random matrix theory. We also give a next-to-leading order expansion of the free energy of the system, proving the existence of the thermodynamic limit.
\end{abstract}
\date{\today}
\maketitle

{\bf MSC classifications: } 82B05, 82B21, 82B26, 15B52.

\setcounter{tocdepth}{1}
\tableofcontents

\section{Introduction}

\subsection{General setting} \label{sec-gensetting}
We consider a system of $N$ points (or particles) in the Euclidean space $\R^\d$ ($\d \geq 1$) with logarithmic, Coulomb or Riesz pairwise interactions, confined by an external potential $\V$ whose amplitude is chosen to be proportional to $N$. For any $N$-tuple of positions $\XN = (x_1, \dots, x_N)$ in $(\R^{\d})^N$ we associate the energy given by
\begin{equation} \label{HN}
\HN(\XN) := \sum_{1 \leq i \neq j \leq N} \g(x_i-x_j) +  \sum_{i=1}^N N \V(x_i).
\end{equation}
The interaction kernel $\g$ is given by either  
\begin{align*} 
& (\LogU \ \text{case}) \quad \g(x) = -\log |x| , \quad \text{in dimension } \d=1, \\
 & (\LogD \ \text{case}) \quad \g(x) = - \log |x| , \quad  \text{in dimension } \d=2, \\
 & (\Riesz \ \text{case}) \quad \g(x) = |x|^{-\s}, \quad   \text{with }\max(0, \d-2)\leq \s<\d, \text{ in dimension $\d \geq 1$}.
\end{align*} 
\LogU \ (resp. \LogD) corresponds to a one-dimensional (resp. two-dimensional) logarithmic interaction, we will call \LogU, \LogD \ the \textit{logarithmic cases}. \LogD \ is also the Coulomb interaction in dimension $2$. For $\d \geq 3$, taking $\s = \d-2$ in the \Riesz\ cases corresponds to a Coulomb interaction in higher dimension, while $\max(\d-2, 0) < \s < \d$ corresponds to more general Riesz interactions. Whenever the parameter $\s$ appears, it will be with the convention that $\s = 0$ in the logarithmic cases. The potential $V$ is a confining potential, growing fast enough at infinity, on which we shall make assumptions later.

For any $\beta > 0$, we consider the canonical Gibbs measure at inverse temperature $\beta$, given by the following density
\begin{equation}\label{gibbs}
d\PNbeta(\XN) = \frac{1}{\ZNbeta} \exp \left( -\frac{\beta}{2} N^{-\frac{\s}{\d}} \HN(\XN)  \right) d\XN,
\end{equation}
where $d\XN = \prod_{i=1}^N dx_i$ is the Lebesgue measure on $(\Rd)^N$, and $\ZNbeta$ is the normalizing constant, called the \textit{partition function}.
In \eqref{gibbs} the inverse temperature $\beta$ appears with a factor $\frac{1}{2}$ in order to match existing convention in random matrix theory. In the \Riesz \  cases, the temperature scaling $\beta N^{-\s/\d}$ is chosen to obtain non-trivial results.

\subsection{The macroscopic behavior: empirical measure}
It is well-known since \cite{choquet} (see e.g. \cite{safftotik} for the logarithmic cases, or \cite[Chap.2]{serfatyZur} for a simple proof in the general case) that under suitable assumptions on $V$, we have
\begin{equation*}
\min \HN = N^2 \I_V (\meseq) +o(N^2), 
\end{equation*}
where $\I_V$ is the mean-field energy functional defined on the set of Radon measures by
\begin{equation}\label{MFener}
\I_V (\mu) : = \iint_{\Rd\times \Rd} \g(x-y) \, d\mu(x)\, d\mu(y) + \int_{\Rd} V(x)\, d\mu(x).
\end{equation}
There is a unique minimizer of $\I_V$ on the space $\probas(\R^{\d})$ of probability measures on $\R^{\d}$, it is called the \textit{equilibrium measure} and we denote it by $\meseq$. We will always assume that $\meseq$ is a measure with a H\"older continuous density on its support, we abuse notation by denoting its density $\meseq(x)$ and we also assume that its support $\Supp$ is a compact set with a nice boundary. We allow for several connected components of $\Supp$ (also called the \textit{multi-cut regime} in the case \LogU). The precise assumptions are listed in Section \ref{sec:assumptions}. 

A convenient macroscopic observable is given by the empirical measure of the particles: if $\XN$ is in $(\R^{\d})^N$ we form
\begin{equation} \label{def:empmeasure}
\emp(\XN) := \frac{1}{N} \sum_{i=1}^N \delta_{x_i},
\end{equation}
which is a probability measure on $\R^{\d}$. The minimisation of $\I_V$ determines the macroscopic (or global) behavior of the system in the following sense:
\begin{itemize}
\item Minimisers of $\HN$ are such that $\emp(\XN)$ converges to $\meseq$ as $N \to \infty$.
\item In fact $\emp(\XN)$ converges weakly to $\meseq$ as $N \ti$ almost surely under the canonical Gibbs measure $\PNbeta$.
\end{itemize}
In other words, not only the minimisers of the energy, but almost every (under the Gibbs measure) sequence of particles is such that the empirical measure converges to the equilibrium measure. Since $\meseq$ does not depend on the temperature, \textit{the asymptotic macroscopic behavior of the system is independent of $\beta$.}

\subsection{The microscopic behavior: empirical fields}
In contrast, several observations (e.g. by numerical simulation, see the figure below) suggest that the behavior of the system at microscopic scale\footnote{Since the $N$ particles are typically confined in a set of order $O(1)$, the microscopic, inter-particle scale is $O(N^{-1/{\d}})$.} depends heavily on $\beta$. 
\begin{figure}[h!] 
\begin{minipage}[c]{.46\linewidth}
\begin{center}
\includegraphics[scale=0.12]{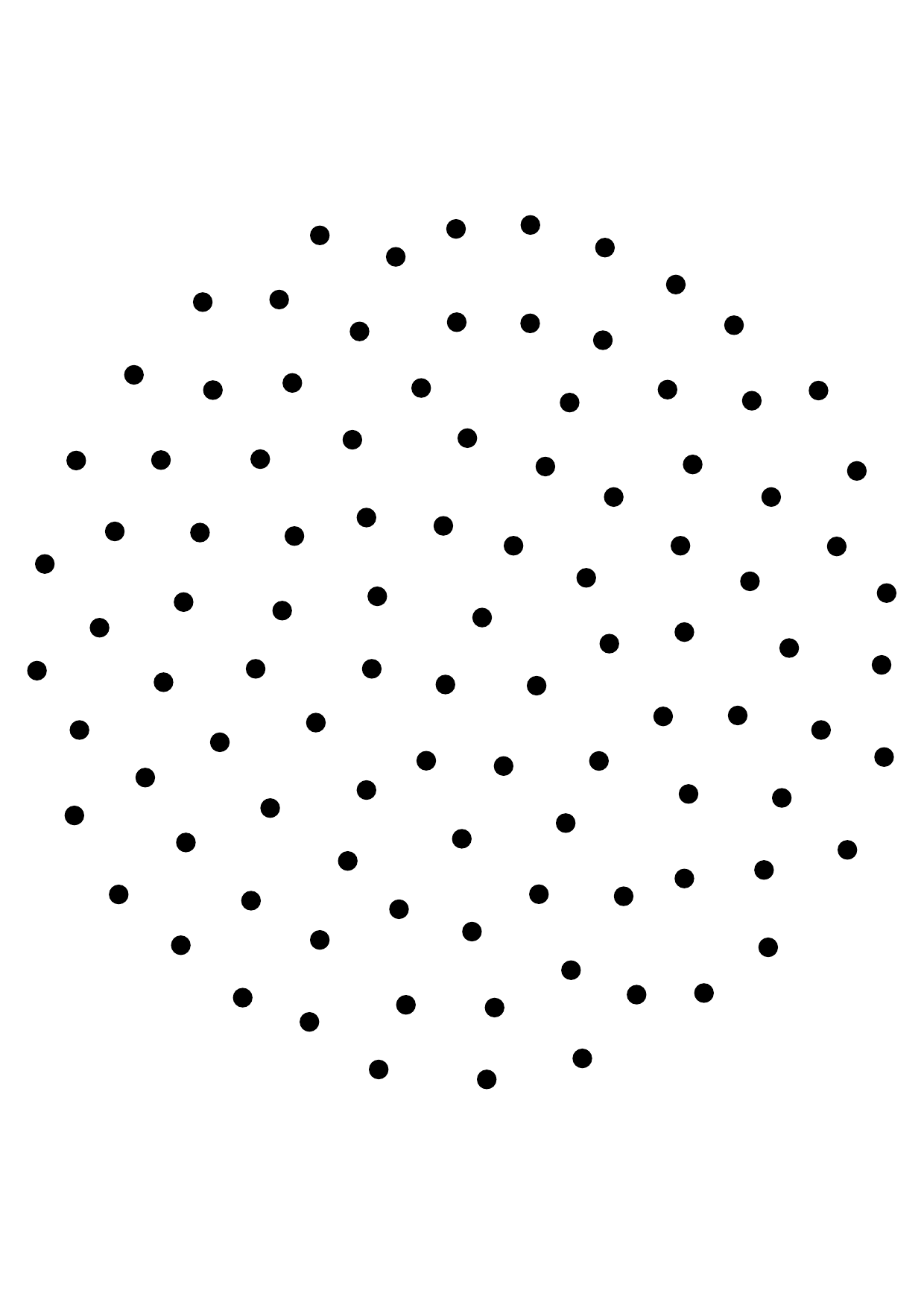} 
\end{center}
\end{minipage}
\begin{minipage}[c]{.46\linewidth}
\begin{center}
\includegraphics[scale=0.13]{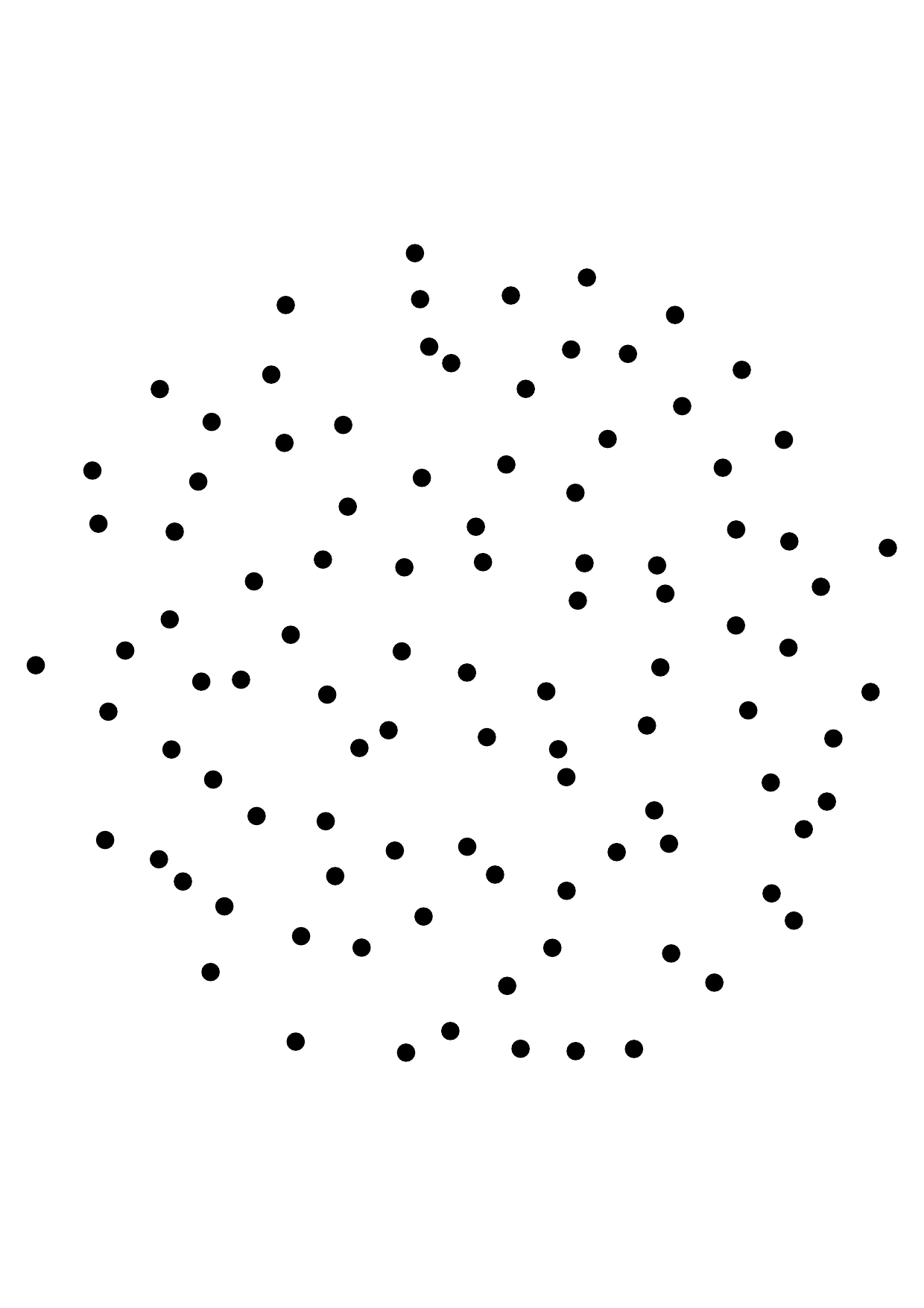} 
\end{center}
\end{minipage}
\vspace{-0.5cm}
\caption{Case \LogD \ with $N = 100$ and $V(x) = |x|^2$, for $\beta = 400$ (left) and $\beta = 5$ (right).}
\end{figure}
In order to investigate it, we choose a microscopic observable which encodes the \textit{averaged microscopic behavior} of the system: the (tagged) empirical field, that we will now define. In the following, $\Sigma$ is the support of the equilibrium measure, and $\config$ denotes the space of point configurations. 

Let $\XN = (x_1, \dots, x_N)$ in $(\R^{\d})^N$ be fixed. 
\begin{itemize}
\item We define $\XN'$ as the finite configuration rescaled by a factor $N^{1/{\d}}$ 
\begin{equation} \label{def:ompN}
\XN' := \sum_{i=1}^N \delta_{N^{1/{\d}} x_i}.
\end{equation}
It is a point configuration (an element of $\config$), which represents the $N$-tuple of particles $\XN$ seen at microscopic scale.

\item We define the \textit{tagged empirical field}\footnote{Bars will always indicated tagged quantities} $\bEmp_N(\XN)$ as
\begin{equation}
\label{def:bEmp}
\bEmp_N(\XN) :=  \frac{1}{|\Sigma|} \int_{\Sigma} \delta_{\left(x,\,  \theta_{N^{1/{\d}} x} \cdot \XN' \right)} dx,
\end{equation}
where $\theta_x$ denotes the translation by $- x$. It is a probability measure on $\Sigma \times \config$.
\end{itemize}
For any $x$ in $\Sigma$, the term $\theta_{N^{1/{\d}}x} \cdot \XN'$ is an element of $\config$ which represents the $N$-tuple of particles $\XN$ centered at $x$ and seen at microscopic scale (or, equivalently, seen at microscopic scale and then centered at $N^{1/{\d}} x$). In particular any information about this point configuration in a given ball (around the origin) translates to an information about $\XN'$ around $x$. We may thus think of $\theta_{N^{1/{\d}}x} \cdot \XN'$ as encoding the behavior of $\XN'$ around $x$.

The \textit{empirical field} is the measure 
\begin{equation} \label{empiricalfield}
\frac{1}{|\Sigma|} \int_{\Sigma} \delta_{\theta_{N^{1/{\d}}x} \cdot \XN'} dx,
\end{equation}
it is a probability measure on $\config$ which encodes the behaviour of $\XN'$ around each point $x$ in $\Sigma$. 

The \textit{tagged empirical field} $\bEmp_N(\XN)$ defined in \eqref{def:bEmp} is a finer object, because for each $x$ in $\Sigma$ we keep track of the centering point $x$ as well as of the microscopic information $\theta_{N^{1/{\d}}x} \cdot \XN'$ around $x$. It yields a measure on $\Sigma \times \config$ whose first marginal is the Lebesgue measure on $\Sigma$ and whose second marginal is the (non-tagged) empirical field defined in \eqref{empiricalfield}. Keeping track of this additional information allows one to test $\bEmp_N(\XN)$ against functions $F(x, \C)$ which may be of the form
$$
F(x, \C) = \chi(x) \tilde{F}(\C),
$$
where $\chi$ is a smooth function localized in a small neighborhood of a given point of $\Sigma$, and $\C \mapsto \tilde{F}(\C)$ is a bounded continuous function on the space of point configurations. Using such test functions, we may thus study the microscopic behavior of the system after a small average (on a small, macroscopic domain of $\Sigma$).

Our main goal in this paper is to characterize the typical behavior of the tagged empirical field under $\PNbeta$. 

\subsection{Main result: large deviation principle and thermodynamic limit}
We let again $\config$ be the space of point configurations in $\R^{\d}$, endowed with the topology of vague convergence, and we consider the space $\probas(\Sigma \times \config)$ with the topology of weak convergence.

For any $\bP$ in $\probas(\Sigma \times \config)$ we will define two terms:
\begin{enumerate}
\item The renormalized energy $\bttW(\bP, \meseq)$  of $\bP$ with background $\meseq$ (see Section \ref{sec:defenergie}).
\item The specific relative entropy $\bERS[\bP|\Poisson^1]$ of $\bP$ with respect to the Poisson point process $\Poisson^1$ of intensity $1$ (see Section \ref{sec:defentropy}).
\end{enumerate}

For any $\beta > 0$, we define a free energy functional $\fbarbeta$ as
\begin{equation}
\label{def:bfbeta} \fbarbeta(\bP) := \frac{\beta}{2} \bttW(\bP,\meseq) + \bERS[\bP|\Poisson^1].
\end{equation}

For any $N, \beta$ we let $\bPgot_{N,\beta}$ be  the push-forward of the canonical Gibbs measure $\PNbeta$ by the \textit{tagged empirical field map} $\bEmp_N$ as in \eqref{def:bEmp} (in other words, $\bPgot_{N, \beta}$ is the law of the tagged empirical field when the particles are distributed according to $\PNbeta$).

We may now state our main result, under some assumptions on $V$ that  will be given in Section \ref{sec:def}.
\begin{theo}[Large Deviation Principle for the tagged empirical fields] \label{TheoLDP} Assume that \ref{H1}--\ref{H6} are satisfied. For any $\beta>0$ the sequence $\{\bPgot_{N,\beta}\}_N$ satisfies a large deviation principle at speed $N$ with good rate function $\fbarbeta - \inf \fbarbeta$.
\end{theo}
In particular, in the limit $N\to \infty$, the law $\bPgot_{N,\beta}$ concentrates on minimizers of $\fbarbeta$. One readily sees the effect of the temperature: in the minimization  there is a competition between the renormalized energy term $ \bttW( \cdot, \meseq)$, which is expected to favor very ordered configurations, and the entropy term which in contrast favors disorder (it is minimal for a Poisson point process).

As a by-product of the large deviation principle we obtain the order $N$ term in the expansion of the partition function.
\begin{coro}[Next-order expansion and thermodynamic limit]
\label{corothermo}
 Under the same assumptions, we have, as $N \to \infty$:
\begin{itemize}
\item In the logarithmic cases \LogU \ and \LogD, 
 \begin{equation} \label{expansionlog}
 \log \ZNbeta=  - \frac{\beta}{2} N^2 \mathcal \I_V(\meseq) +\frac{\beta}{2} \frac{N \log N}{\d} - N \min \fbarbeta  +N(|\Sigma|-1) + No_N(1).
\end{equation}
\item In the \Riesz \ cases
 \begin{equation}\label{expansionriesz}
 \log \ZNbeta= - \frac{\beta}{2} N^{2-\frac{\s}{\d} } \mathcal \I_V(\meseq) - N \min \fbarbeta  +N(|\Sigma|-1)  +  No_N(1).
 \end{equation} 
\end{itemize} 
The logarithmic cases enjoy a scaling property which allows to re-write the previous expansion as
\begin{multline} \label{logz}
 \log \ZNbeta =- \frac{\beta}{2} N^2 \mathcal \I_V(\meseq) +\frac{\beta}{2}\frac{N \log N}{\d} 
 -N C(\beta, \d)
 \\- N \left( 1 -\frac{\beta}{2 \d}\right) \int_\Sigma \meseq(x)\log \meseq(x) \, dx + N o_N(1),
 \end{multline} 
where $C(\beta, \d)$ is a constant depending only on $\beta$ and the dimension, but \textit{independent of the potential} $V$.
\end{coro}

In the \Riesz \ cases, by a similar scaling argument, we get\footnote{See \eqref{def:fbetasansbarre} for the definition of $\fbeta$.}
\begin{multline*}
\log \ZNbeta = - \frac{\beta}{2} N^{2-\frac{\s}{\d} } \mathcal \I_V(\meseq) - N \int_\Sigma  \meseq(x) \min \mathcal{F}^1_{\beta \meseq(x)^{\s/\d}}  - N \int_\Sigma \meseq(x)\log \meseq(x)\, dx \\ + N o_N(1).
\end{multline*}
Here $\beta$ and $\meseq$ are coupled, and at each point $x \in \Sigma$ there is  an \textit{effective temperature} depending on the equilibrium density $\meseq(x)$.

\subsection{Variational property of the sine-beta process}
In the particular case of \LogU \ with a quadratic potential $V(x) = x^2$, the equilibrium measure is known to be  Wigner's semi-circular law whose density is given by 
\begin{equation*}
x \mapsto \frac{1}{2\pi} \1_{[-2,2]} \sqrt{4-x^2}.
\end{equation*}
The limiting process at microscopic scale around a point $x \in (-2,2)$ (let us emphasize that here there is \textit{no averaging}) has been identified for any $\beta > 0$ in \cite{vv} and \cite{MR2484278}. It is called the \textit{sine-$\beta$ point process} and we denote it by $\sineb(x)$ (so that $\sineb(x)$ has intensity $\frac{1}{2\pi} \sqrt{4-x^2}$). 
For $\beta > 0$ fixed, the law of these processes do not depend on $x$ up to rescaling and we denote by $\sineb$ the corresponding process with intensity $1$. 

A corollary of our main result is a new variational property of $\sineb$.
 \begin{coro}[Sine-beta process]
\label{bsinebmin} 
For any $\beta > 0$, the point process $\sineb$ minimizes 
\begin{equation} \label{def:fbetasansbarre}
\fbeta(P) := \frac{\beta}{2} \ttW(P,1) + \ERS[P|\Poisson^1]
\end{equation}
among stationary point processes of intensity $1$ in $\R$.  
\end{coro}
The objects $\ttW$ and $\ERS$ are defined in Sections \ref{sec:defenergie} and \ref{sec:defentropy} respectively. They are the non-averaged  versions of $\bttW$ and $\bERS$. Corollary \ref{bsinebmin} is proven in Section \ref{proofSinGin}. The main interest of this result is to give a one-parameter family of free energy functionals which are minimized by $\sineb$.

The main other setting in which the limiting Gibbsian point process is identified is the case \LogD \ with quadratic external potential, which gives rise to the so-called \textit{Ginibre point process} (see \cite{ginibre,bors}). We can also prove that this process minimizes a similar free energy functional among stationary point processes of intensity $1$ in $\R^2$. However, the Ginibre point process does not come within a family indexed by $\beta$ and its properties are already very well-known, so we omit the proof here.

\subsection{Motivation} The main motivation for studying such systems comes from statistical physics and random matrix theory. 

In all cases of interactions, the systems governed by the Gibbs measure $\PNbeta$ are considered as difficult systems in statistical mechanics because the interactions are truly long-range, singular, and the points are not constrained to live on a lattice. The \LogD \ case is a two-dimensional \textit{Coulomb gas} or \textit{one-component plasma} (see e.g. \cite{alastueyjancovici}, \cite{jlm}, \cite{sm} for a physical treatment). Two-dimensional Coulomb interactions are also at the core of the fractional quantum Hall effects \cite{girvin2005introduction,stormer1999fractional}, Ginzburg-Landau vortices \cite{sandier2008vortices} and vortices in superfluids and Bose-Einstein condensates.  The \Riesz \ case with $\d = 3, \s = \d-2$ corresponds to higher-dimensional Coulomb gases, which can be seen as a toy (classical) model for matter (see e.g. \cite{PenroseSmith,jlm,LiLe1,LN}). 

The \LogU \ case corresponds to a one-dimensional \textit{log-gas} or \textit{$\beta$-ensemble}, and is of particular importance because of its connection to Hermitian random matrix theory (RMT), we refer to \cite{forrester} for a comprehensive treatment. In the most studied cases $\beta = 1, 2, 4$ with $V$ quadratic, the canonical Gibbs measure $\PNbeta$ coincides with the joint law of the $N$ eigenvalues of the so-called GOE, GUE, GSE ensembles.  The connection between the law of the eigenvalues of  random matrices and Coulomb gases was first noticed in  the foundational papers \cite{wigner,dyson}.

The general \Riesz \ case can be seen as a generalization of the Coulomb case, and motivations for its study are numerous in the physics literature (in solid state physics, ferrofluids, elasticity), see for instance \cite{mazars,bbdr,campa2009statistical,torquato2016hyperuniformity}. This case also corresponds to systems with Coulomb interaction constrained to a lower-dimensional subspace. Another motivation for studying such systems is the topic of approximation theory\footnote{In that context, the systems are usually studied on a $\d$-dimensional sphere or torus.}, as varying $\s$ from $0$ to $\infty$ connects \textit{Fekete points} to best  packing problems. We refer to the forthcoming monograph \cite{bhslivre}, the review papers \cite{sk,bhs} and references therein.

As always in statistical mechanics, one would like to understand if there are phase transitions for particular values of the (inverse) temperature $\beta$. For the systems studied here, one may expect what physicists call a liquid for small $\beta$, and a crystal for large $\beta$. Such a transition, occuring at finite $\beta$, has been conjectured in the physics literature for the \LogD \ case (see e.g. \cite{bst,caillol1982monte,choquard1983cooperative}) but its precise nature is still unclear (see e.g. \cite{stishov} for a discussion). Recent progress in computational physics concerning such phenomenon in two-dimensional systems (see e.g. \cite{kapfer2015two}) suggests a possibly very subtle transition between the liquid and solid phase.

\subsection{Related works}
The case \LogU \ has been most intensively studied, for general values of $\beta$ and general potentials. This culminated with very detailed results, including precise asymptotic expansions of the partition function \cite{bg1,bg2,sh2}, characterizations of the point  processes at the microscopic level  \cite{vv,MR2484278}, universality and rigidity results \cite{bey1,bey2,bfg, li2016rigidity}. The case \LogD \ has been mostly studied for $V$ quadratic or analytic in the case $\beta=2$, which is determinantal, see e.g. \cite{ginibre,bors, ridervirag,ahm,ahm2}, however the general $\beta$ case has recently attracted some attention, see \cite{BBNY, bauerschmidt2016two} and below. The Coulomb cases without temperature (formally $\beta=\infty$) are well understood with rigidity results on the number of points in microscopic boxes \cite{aoc,rns,petrache2016equidistribution,lieb2017local}. 

In all  cases, at the macroscopic scale, a large deviation principle for the law of the empirical measure $\emp(\XN)$ holds under the Gibbs measure 
\begin{equation*}
\frac{1}{\ZNbeta} \exp\left(-\frac{\beta}{2}\HN( \XN ) \right) d\XN,
\end{equation*}
i.e. \eqref{gibbs} with a different temperature scaling in the \Riesz \ cases. This LDP takes place at speed $N^2$ with rate function given by
$\frac{\beta}{2} (\mathcal \I_V - \I_V(\meseq))$, and was proven in \cite{hiaipetz,bg} (for \LogU), \cite{bz,bodguionnet} (for \LogD), and \cite{chafai2014first} in a general setting including \Riesz \ (see also \cite[Chap.2]{serfatyZur}). Our main result, Theorem \ref{TheoLDP}, can be understood as a next-order LDP on a microscopic quantity, or ``type-III" LDP. The idea of using large deviations methods for such systems already appeared in \cite{bbdr} where results of the same flavor but at a more formal level  are presented. 

The existence of a thermodynamic limit (as in Corollary \ref{corothermo}) had been known for a long time for the two and three dimensional Coulomb cases \cite{LN,sm,PenroseSmith}.  Our formula \eqref{logz} is to be compared with the results of \cite{sh2,bg1,bg2} in  the \LogU \ case, where  asymptotic expansions of $\log \ZNbeta$ are pushed much further, at the price of quite strong assumptions on the regularity of the potential $V$.  In the \LogD \ case, our result can be compared to the formal result of \cite{wz}. In both logarithmic cases, we recover  in \eqref{logz}  the   cancellation of the order $N$ term when $\beta=4$ in dimension 2 and $\beta =2$ in dimension $1$, as was observed in \cite[Part.II, Sec.II]{dyson} and \cite{wz}. 

\medskip

Our approach is in line with the ones of \cite{sandier20151d} for the case \LogU, \cite{sandier20152d} for the case \LogD, \cite{rougerie2016higher} for the general Coulomb cases and \cite{petrache2014next} for the general \Riesz \ case, and we borrow some tools from these papers.  They focused on the analysis of the microscopic behavior of minimizers, which formally corresponds to $\beta=\infty$, but the understanding of $\HN$ also allowed to deduce information on $\PNbeta$ for finite $\beta$, in terms of an asymptotic expansion the partition function and a qualitative description of the limit of $\PNbeta$, which are sharp only as $\beta \to \infty$. Our goal here is to obtain a complete LDP at speed $N$, valid for all $\beta$. 
\medskip

Several subsequent works by the authors rely strongly on the results of the present paper. 
\begin{itemize}
\item In \cite{leble2016logarithmic}, an alternative (more explicit) definition of the renormalized energy is introduced and used to study the limit $\beta \to 0$ of the minimisers of the free energy functional (proving convergence to the Poisson point process). For \LogU \ and \Riesz \  in dimension $\d = 1$, we also study  the limit $\beta \to +\infty$ and prove a rigorous crystallization result.
\item In \cite{loiloc}, focusing on \LogD , the result of Theorem \ref{TheoLDP} is pushed further to arbitrary mesoscopic averaging scales. It yields (non-optimal) local laws and rigidity estimates (see \cite{BBNY} for a similar, independent result with optimal rigidity). 
\item In \cite{leble2016fluctuations}, we prove a central limit theorem for the fluctuations of linear statistics in the \LogD \ case, for $\beta > 0$ arbitrary, under mild regularity assumptions on the test functions and the potential, see \cite{bauerschmidt2016two} for a similar, independent result. In \cite{bls} the approach is also implemented in the \LogU \ case, for possibly \textit{critical} potentials.
The analysis of \cite{leble2016fluctuations,bls} uses in a crucial way the expansion of the partition function as given in \eqref{logz}.
\item  The two-dimensional Coulomb system  with particles of opposite signs, also called classical Coulomb gas or \textit{two-component plasma} is  a fundamental model of statistical mechanics, related to the sine-Gordon or XY models, and to the celebrated Kosterlitz-Thouless phase transition. The approach of the present paper is extended and adapted to that setting in \cite{2D2CP}.
\item Finally, the case of \textit{hypersingular} Riesz interactions $\s > \d$, which are essentially not long-range, is treated in \cite{hardin2017large}.
\end{itemize}

\subsection{Outline of the proof, plan of the paper}
The starting point of the analysis is the following \textit{splitting formula}, obtained in \cite{petrache2014next} for the greatest generality. For any $\XN = (x_1, \dots, x_N)$ we have
\begin{align}
\label{splitlog} \text{(\LogU, \ \LogD)}  \quad & \HN(\XN)= N^2 \mathcal \I_V (\meseq)
 - \frac{N \log N}{\d}  + N\WN(\XN, \meseq) +2 N \sum_{i=1}^N \zeta(x_i), \\ 
\label{splits}\text{(\Riesz)}  \quad &
\HN(\XN)= N^2 \mathcal \I_V(\meseq) +  N^{1+\s/\d}\WN(\XN, \meseq) +2 N \sum_{i=1}^N \zeta(x_i),
 \end{align}
where $\WN$ is a next-order energy which will be defined later, and $\zeta $ is an effective confining term. In this paragraph, for simplicity, we will work as if $\zeta$ was $0$ on $\Sigma$ and $+ \infty$ on the complement $\Sigma^c$.

Using \eqref{splitlog}, \eqref{splits}, one can factor out some constant terms from the energy and the partition function, and reduce $\PNbeta$ to 
 \begin{equation} \label{gibequiv}
 d\PNbeta(\XN)= \frac{1}{K_{N,\beta}} \exp\left(-\frac{\beta N}{2} \WN(\XN, \meseq)\right) \1_{\Sigma^N}(\XN) d\XN,
 \end{equation}
 where $\KNbeta$ is a new partition function.
 
To prove a LDP, the standard method consists in evaluating  the logarithm of $\bPgot_{N,\beta} ( B(\bar{P},\ep))$, where $\bar{P}$ is a given element of $\probas (\Supp\times\config)$ and $B(\bar{P},\ep)$ is a ball of small radius $\ep$ around it, for a distance that metrizes the weak topology.

We may write 
 \begin{equation*}
 \bPgot_{N,\beta} (B(\bar{P},\ep)) \simeq \frac{1}{K_{N,\beta}}\int_{\bEmp_N(\XN) \in B(\bar{P},\ep)} \exp\left( -\frac{\beta N}{2} \WN(\XN, \meseq) \right) \prod_{i=1}^N\indic_{\Sigma}(x_i) dx_i, 
 \end{equation*}
and thus we obtain formally 
\begin{multline}\label{formel}
\lim_{\ep\to 0} \log \bPgot_{N,\beta}( B(\bar{P}, \ep))= -\log K_{N,\beta} -\frac{\beta N}{2} \WN(\bar{P}, \meseq)\\ + \lim_{\ep \to 0}\log \left|\{\XN \in \Sigma^N, \bEmp_N(\XN) \in B(\bar{P},\ep)\}\right|.
\end{multline}
Extracting this way the exponential of a function is the idea of Varadhan's integral lemma (cf. \cite[Theorem 4.3.1]{dz}), and works when the function (here $\WN(\cdot, \meseq)$) is continuous. In similar contexts to ours, this idea is used e.g. in \cite{Georgii1,georgiz}.

In \eqref{formel} the term in the second line is the logarithm of the volume of point configurations whose associated tagged empirical field is close to $\bar{P}$. By classical large deviations theorems, such a quantity is expected to be the entropy of $\bar{P}$. More precisely since we are dealing with  empirical fields, we need to use the specific relative entropy (as e.g. in \cite{Georgii1}), which is a relative entropy \textit{per unit volume} (as opposed to the usual relative entropy, which in this context would only take the values $0$ or $+ \infty$). 
 
 The most problematic term in \eqref{formel} is the second one in the right-hand side, $\WN(\bar{P}, \meseq)$, which really makes no sense. The idea is that it should be close to $\bttW(\bar{P}, \meseq)$ which is the well-defined infinite-volume quantity appearing in the rate function \eqref{def:bfbeta}.  If we were dealing with a continuous function of $\bar{P}$ then the replacement of $\WN(\bar{P}, \meseq) $ by $\bttW(\bar{P}, \meseq)$ would be fine. 
 However there are three difficulties:
 \begin{enumerate}
 \item $\WN(\cdot, \meseq)$ depends on $N$ and we need to take the limit $N\to \infty$,
 \item  this  limit cannot be uniform because the interaction becomes infinite when two points approach each other,
 \item $\WN$ is not adapted to our topology, which retains only local information on the point configurations, while $\WN(\cdot, \meseq)$ contains long-range interactions and does not depend only on the local arrangement of the points but on the global configuration.
 \end{enumerate}
Thus, the approach outlined above cannot work directly. Instead, we look again at the ball $B(\bar{P}, \ep)$ and show that we can find therein a logarithmically  large enough volume of configurations for which we can replace $\WN(\XN, \meseq)$ by $\bttW(\bar{P}, \meseq)$. This will give a lower bound on $\log \bPgot_{N,\beta}( B(\bar{P}, \ep))$, while the upper bound is in fact much easier to deduce from the previously known results of \cite{petrache2014next}. The second obstacle above, related to the discontinuity of the energy near the diagonals of $(\Rd)^N$, is handled by truncating the interaction at small distances and controlling the error, which is shown to be small often enough (namely,  the volume of the configurations where it is small is large enough at a logarithmic scale).
 
The third point above (the fact that the total energy is nonlocal in the data of the configuration) is the most delicate one.  The way we circumvent it is via the \textit{screening procedure} developed in \cite{gl13,sandier20152d,sandier20151d,rougerie2016higher,petrache2014next}.  Roughly speaking, we can always modify a bit each configuration in order to make the energy that it generates additive (hence local) in space, while not changing the empirical field too much nor losing too much logarithmic volume in \smallskip phase-space.

The paper is organized as follows:
\begin{itemize}
\item  Section \ref{sec:def} contains our assumptions, the definitions of the renormalized energy and of the specific relative entropy, as well as some  notation.
\item In Section \ref{sec:prelim} we present preliminary results on the renormalized energy, some borrowed from previous works.
\item Section \ref{sec-main} contains the proofs of the large deviation principle and its corollaries, assuming two intermediate results.
\item  In Section \ref{sec:screening} we adapt the screening procedure of previous works to the present setting, and we prove that it can be applied with high probability. We introduce the regularization procedure which deals with the singularity of the interaction at short distances. 
\item  In Section \ref{sec:construction} we complete the proof of the main technical result, by showing that given a random point configuration we can often enough screen and regularize it to have the right energy.
\item  In Section \ref{preuvesanov} we prove a second intermediate result, a large deviation principle for empirical fields under a reference measure without interactions. 
\item  In Section \ref{annex}, we collect miscellaneous additional proofs.
\end{itemize}

\subsection{Open questions}
Let us conclude our introduction by gathering some open questions related to the present work.
\begin{itemize}
\item  Our result naturally raises two questions: the first is to better understand  $\bttW(\cdot, \meseq)$ and its minimizers  and the second is to better understand the specific relative entropy, about which not much is  known in general. Identifying the minimizers of   $\fbarbeta $ or identifying some of their properties seems to be a difficult problem. 
\item  It easy to see that if $\beta$ is finite, the minimizer of our free energy functional cannot be a periodic point process and in particular it cannot be the point process associated to some lattice (or crystal). Hence there is no crystallization in the strong sense at finite temperature, i.e. the particles cannot concentrate on an exact lattice. However some weaker crystallization could occur at finite $\beta$ e.g. if  the connected two-point correlation function $\rho_{2} -1$ of minimizers decays more slowly to $0$ as $\beta$ gets larger. Hints towards such a behavior of $\rho_2-1$ for \LogU \ may be found in \cite{ForresterCbeta} where an explicit formula for the two-point correlation function is computed for the limiting point process associated to the $\beta$-Circular Ensemble (which according to \cite{na14} turns out to  also be $\sineb$).
\item The uniqueness of minimizers of $\fbarbeta$ is expected to hold for \LogU, but the one-dimensional Riesz case is unclear. In higher dimensions, it is natural to ask whether the rotational invariance of $\fbarbeta$ accounts for all the degeneracy.
\item It is conjectured that the triangular (or Abrikosov) lattice has minimal energy in the \LogD \ case (see \cite{blanc2015crystallization} for a survey). Can we at least prove that any minimizer of the renormalized energy has infinite specific relative entropy, which would be a first hint towards their conjectural “ordered” nature?
\item Is there a limit to the Gibbsian point process defined as the push-forward of $\PNbeta$ by $\XN \mapsto \sum_{i=1}^N \delta_{N^{1/{\d}}x_i}$? Convergence is only known for \LogU \ (the limit is the $\sineb$ process mentioned above) and for $\LogD$ in the $\beta =2$ case.
\end{itemize}
 \vspace{0.3cm}
 
{\it Acknowledgements :} We would like to thank Paul Bourgade, Percy Deift, Tamara Grava, Jean-Christophe Mourrat, Nicolas Rougerie  and Ofer Zeitouni for useful comments.

\section{Assumptions and main definitions} \label{sec:def}
\subsection{The assumptions}\label{sec:assumptions}
Let us state our assumptions on $V$ and the associated equilibrium measure $\meseq$.
Before doing so, we recall that a compact set $K$ is said to have positive $\g$-capacity if there exists a probability measure $\mu$ on $K$ such that 
$$\iint \g(x-y)\, d\mu(x)\, d\mu(y)<+\infty,$$ if not it has zero $\g$-capacity.
A general set $E$  has positive $\g$-capacity if it contains a compact set that does. 
\begin{description}
\item[\namedlabel{H1}{(H1)} - Regularity of $V$] The potential $V$ is lower semi-continuous and bounded below, and the set 
$$\{x: V(x)<\infty\}$$ 
has positive $\g$-capacity. 
\item[\namedlabel{H2}{(H2)} - Growth assumption]  We have
\begin{align*}
(\LogU, \ \LogD)\quad & \lim_{|x|\to \infty} \frac{V(x)}{2}- \log |x|= + \infty, \\
(\Riesz) \quad & \lim_{|x|\to\infty} V(x)=+\infty. 
\end{align*}
\end{description}
Assumptions \ref{H1}, \ref{H2} imply, by the results of \cite{frostman}, that the functional $\I_V$ defined in \eqref{MFener} has a unique minimizer (denoted by $\meseq$) among probability measures on $\R^{\d}$, which furthermore has a compact support (denoted by $\Sigma$). 

We make the following additional assumptions: 
\begin{description}
\item[\namedlabel{H4}{(H3)} - Regularity of the equilibrium measure] $\muv$  has a density which is $ C^{0, \kappa}(\Sigma)$ for some $0<\kappa\le 1$. In particular, there exists $\om > 0$ such that 
$$
\meseq(x) \leq \om \text{ on $\Sigma$.}
$$
\item[\namedlabel{H5}{(H4)} - Regularity of the boundary] Let $\mathring{\Sigma}$ be the interior of $\Sigma$
$$
\mathring{\Sigma} := \{x \in \Rd, \meseq(x) > 0\},
$$
and let $\Gamma := \partial \mathring{\Sigma}$ be its boundary. We assume that there is a finite number of connected components of $\Gamma$, and we enumerate them as
$$
\Gamma := \bigcup_{j \in J} \Gammaj.
$$
Each $\Gammaj$ is a $ C^1$ submanifold of dimension $\ll_j \le \d-1$.  For any $j \in J$, there exist constants $\cc, \cC,\alpha_j \ge 0$ and a neighborhood $\Uj$ of $\Gammaj$ in $\Sigma$ such that  
\begin{align}
\label{conda1} & \cc  \,\dist (x, \Gammaj)^{\alpha_j} \le \muv(x) \le \cC\,\dist (x, \Gammaj)^{\alpha_j} \text{ on } \Uj \\ 
\label{conda2} & \muv \in C^{0, \min(\alpha_j, 1)}(\Uj).
\end{align}
Moreover, if $\alpha_j \geq 1$, we impose that
\begin{equation}
\label{conda3}
|\nab \muv(x)| \le \cC \dist (x, \Gammaj)^{\alpha_j - 1} \text{ in } \Uj.
\end{equation}
\end{description}
These assumptions include the case of the Wigner's semi-circle law arising for a quadratic potential in the \LogU \ case. We also know that in the Coulomb cases, a quadratic potential gives rise to an equilibrium  measure  which is a multiple of a characteristic function of a ball, also covered by our assumptions with $\alpha=0$. Finally, in the Riesz case, it was noticed in \cite[Corollary 1.4]{chafai2014first}  that any compactly supported radial profile can be obtained as the equilibrium measure associated to some potential. Our assumptions are thus never empty. They also allow to treat a variety of so-called {\it critical cases}, for instance those where $\mu_V$ vanishes in the bulk of its support.  We will also comment more on known regularity results and sufficient conditions at the end of Section \ref{sec22} below.

Our last assumption is an integrability condition, ensuring the existence of the partition function.
\begin{description}
\item[\namedlabel{H6}{(H5)}] Given $\beta$, for $N$ large enough, we have
\begin{align*}  
(\LogU, \ \LogD) &\int \exp\left(-\beta N^{1-\frac{\s}{\d}} \left( \frac{V(x)}{2}- \log |x|\right)\right) \, dx <\infty  \\ 
(\Riesz) & \int \exp\left(-\frac{\beta}{2}  N^{1-\frac{\s}{\d}}  V(x)\right) \, dx < + \infty.
\end{align*}\end{description}
It is easy to see that \ref{H6} is satisfied as soon as $V$ grows fast enough at infinity.

\subsection{The effective confinement term}\label{sec22}
This paragraph is devoted to defining the effective confinement term $\zeta$ appearing above. Under Assumptions \ref{H1}, \ref{H2}, the result of Frostman cited above ensures that, introducing the $\g$-potential generated by $\meseq$
\begin{equation}
\label{def:hpotential}
\hmuv(x) :=\int_{\R^{\d}} \g(x-y) d\meseq(y),
\end{equation}
and letting 
\begin{equation} 
\label{def:cV}
\cV :=\I_V(\meseq) - \hal \int_{\Rd}  V(x) d\meseq(x),
\end{equation} 
we have\footnote{Here it is only important to know that quasi-everywhere implies Lebesgue almost-everywhere.} 
\begin{align*}
&\hmuv  +\tfrac{V}{2} \ge \cV \text{ quasi-everywhere (q.e.) on } \R^{\d}, \\
&\hmuv  +\tfrac{V}{2} = \cV \text{ q.e. on } \Sigma.
\end{align*}
We may now define the function $\zeta$ that appeared before, as
\begin{equation}\label{defzeta}
\zeta:= \hmuv +\tfrac{V}{2} - \cV.
\end{equation}
We let $\omega$ be the zero set of $\zeta$, we have the inclusion
\begin{equation} \label{defomega}
\Sigma \subset \omega := \{\zeta = 0\}.
\end{equation}
In the Coulomb cases the function $\hmuv$ can be viewed as the solution to an obstacle problem (see for instance \cite[Sec. 2.5]{serfatyZur}) and in the \Riesz \, and \LogU \, cases, as the solution to a fractional obstacle problem (see \cite{css}). The set $\omega$  corresponds to the {\it contact set} or {\it coincidence set} of the obstacle problem, and $\Supp$ is the set where the obstacle is \textit{active}, sometimes called the {\it droplet}. 
 Thanks to this connection, the regularity of $\meseq$ can be implied by that of $V$. Let us summarize the known facts for the Coulomb case (see \cite{css} for the fractional case):
\begin{itemize}
\item If $V$ is  $C^{1,1}$ then the density of the equilibrium measure is given by
$$
d\meseq(x) = \frac{1}{4\pi} \Delta V(x) \mathbf{1}_{\Sigma}(x) dx.
$$
In particular, if $V\in C^{2, \kappa}$ then $\meseq$ has a $C^{0,\kappa}$ density on its support. If $\Delta V>0$ near $\omega$  then $\Sigma $ and $\omega$ coincide.
\item The points of the boundary $\partial \omega$ of the coincidence set can be either \textit{regular}, i.e. $\partial \omega$ is locally the graph of a $C^{1, \kappa}$ function, or \textit{singular}, i.e. $\partial \omega$ is locally cusp-like (this classification was introduced in \cite{MR1658612}). Singular points are nongeneric and we implicitly assume that they are absent by assuming \ref{H5}, for technical reasons which  might be bypassed.
\item If $V$ is $C^{3, \kappa}$, then $\partial \omega$ is locally $C^{2, \kappa}$ around each regular point (see \cite[Thm. I]{caffarelli1976smoothness}).
\item In the setting of a bounded domain with zero Dirichlet boundary condition, if $V$ is strictly convex (which implies that $\Sigma$ and $\omega $ coincide) and of class $C^{k+1,\kappa}$ on $\R^2$, it was shown (see \cite[Section 4]{kinderlehrer1978variational}) that $\Sigma$ is connected and that $\partial \Sigma$ is $C^{k, \kappa}$ with no singular points. 
\end{itemize}

\subsection{The extension representation for the fractional Laplacian}
\label{sec:deflap}
In general, the kernel $\g$ is not the convolution kernel of a local operator, but rather of a fractional Laplacian. Here we use the \textit{extension representation} of \cite{cafsil}: by adding one space variable  $y\in \R$ to the space $\R^\d$, the nonlocal operator can be transformed into a local operator of the form $\div (\yg \nab \cdot)$.

In what follows, $\k$ will denote the dimension extension. We will take $\k=0 $ in the Coulomb cases (i.e. \LogD \ and \Riesz \ with $\d \ge 3, \ \s = \d-2$), for which $\g$ itself is the kernel of a local operator. In all other cases, we will  take $\k=1$. Points in the space $\mr^\d$ will be denoted by $x$, and points in the extended space $\mr^{\d+\k}$ by $X$, with $X=(x,y)$, $x\in \mr^\d$, $y\in \mr^\k$. We will often identify $\R^\d \times \{0\}$ and $\R^\d$.


If $\gamma$ is chosen such that 
\begin{equation}\label{gs}
\d-2+\k+ \gamma  =\s,
\end{equation}
then, given a probability measure $\mu$ on $\R^\d$, the $\g$-potential generated by $\mu$, defined in $\R^\d$ by
\begin{equation*}
\mathsf{h}^{mu}(x) := \int_{\R^{\d}} \g(x-\tilde{x}) \, d\mu(\tilde{x})
\end{equation*}
can be extended to a function $\Hmu$ on $\R^{\d+\k}$ defined by 
\begin{equation*}
\Hmu(X) := \int_{\R^{\d+\k}} \g(X - \tilde{X}) \, d\mu(\tilde{X}) = \int_{\R^{\d}} \g(X - (\tilde{x},0)) \, d\mu(\tilde{x}), 
\end{equation*}
and this function satisfies 
\begin{equation}
\label{divh}
- \div (\yg \nab H^\mu)=  \cds {\mu} \delta_{\R^\d}
\end{equation}
where by $\delta_{\R^\d}$ we mean the uniform  measure on $\R^\d\times \{0\}$. The corresponding values of the constants $\cds$ are given in \cite[Section 1.2]{petrache2014next}.  In particular, the potential $\g$ seen as a function of $\R^{\d+\k}$ satisfies
 \begin{equation}\label{eqg}
 - \div (\yg \nab \g)= \cds \delta_0.
 \end{equation}


To summarize, we will take 
\begin{itemize}
\item $\k=0, \gamma=0$ in the Coulomb cases.
\item $\k=1, \gamma = 0$ for \LogU.
\item $\k=1, \gamma = \s - \d + 2 - \k$ in the remaining \Riesz \ cases.
\end{itemize}
We may note that our  assumption $\d-2\le \s<\d$  implies  that $\gamma$ is always in $(-1,1)$. We refer to \cite[Section 1.2]{petrache2014next} for details about this extension representation.

\subsection{Truncating the interaction, spreading the charges}
We briefly recall the procedure used in \cite{petrache2014next}, following \cite{rougerie2016higher}, for truncating the interaction or, equivalently, spreading out the point charges.

For any $\eta \in (0,1)$, we define
\begin{equation}
\label{def:truncation} \g_{\eta} := \min(\g, \g(\eta)), \quad \f_{\eta} := \g - \g_{\eta}.
\end{equation}
We also define  
\begin{equation} \label{defde}
\delta_0^{(\eta)}:= - \frac{1}{\cds} \div (\yg \g_{\eta}), 
\end{equation}
it is a positive  measure supported on $\partial  B(0, \eta)$.

\subsection{The next-order energy: finite configurations}
\label{sec:defW}
In this section, we define the quantity $\WN$ appearing in the splitting formulas \eqref{splitlog}, \eqref{splits}. 

Let $N \geq 1$ and let $\XN$ be a $N$-tuple of points in $\R^{\d}$. We introduce the following blown-up (or zoomed) quantities:
\begin{itemize}
\item For any $i \in \{1, \dots, N\}$ we let $x'_i := N^{1/\d} x_i$.
\item We define $\meseq'$ as $\meseq'(x) := \meseq(N^{-1/\d}x)$. In particular, $\meseq'$ is a positive, absolutely continuous measure of total mass $N$, with support $N^{1/\d} \Sigma$.
\end{itemize}
We let $\hpN$ be the $\g$-potential (in the \textit{extended} space $\R^{\d+\k}$) generated by the $N$ points $x'_1, \dots, x'_N$ and the measure $\meseq'$ seen as a negative density of charges
\begin{equation} \label{def:hpN}
\hpN(X) := \int_{\R^\d} \g\(X - (\tilde{x}, 0) \right) \left(\sum_{i=1}^N \delta_{x'_i} - d \meseq'\right) (\tilde{x}).
\end{equation}
We will call $\nab \hpN$ the \textit{local electric field}.
Let us observe that, from \eqref{eqg}, we have
\begin{equation}\label{HNeq}
-\div\left(\yg \nab \hpN\right)= \cds \left( \sum_{i=1}^N \delta_{x_i'}-\meseq' \delta_{\R^{\d}} \right)
\end{equation}
Next, we define the truncated version of $\hpN, \nabla \hpN$. We can equivalently let $\hpNe$ be
$$
\hpNe(X) := \int_{\R^\d} \g \(X - Y \right) \left(\sum_{i=1}^N \delta_{x'_i}^{(\eta)} - \meseq'\delta_{\R^{\d}} \right) (Y),
$$
or define it as
\begin{equation} \label{def:hpNe}
\hpNe(X) := \hpN(X) - \sum_{i=1}^N \f_{\eta}(X - x'_i),
\end{equation}
with the notation of \eqref{def:truncation}. We observe that
\begin{equation} \label{propnabhpNe}
- \div\left(\yg \nab \hpNe \right)= \cds \left( \sum_{i=1}^N \delta^{(\eta)}_{x_i'}-\meseq' \delta_{\R^{\d}} \right),
\end{equation}
with $\delta_x^{(\eta)}$ as in \eqref{defde}.

Finally, we let the next-order energy $\WN$ be
\begin{equation}
\label{defWN}
\WN(\XN, \meseq) := \frac{1}{N \c} \lim_{\eta\to 0}\left( \int_{\R^{\d+\k} } \yg |\nab \hpNe|^2 - N \cds \g(\eta)\right).
\end{equation}
It is proven in \cite{petrache2014next} that the limit in \eqref{defWN} exists and that with this definition, the splitting formulas \eqref{splitlog}, \eqref{splits} hold. With the factor $\frac{1}{N}$ the quantity $\WN(\cdot, \meseq)$ is expected to be typically of order $1$.

Let us emphasize two aspects of \eqref{defWN}. First, $\WN$ is defined as a single integral of a quadratic quantity (the local \textit{electric field}) instead of a double integral analogous to the summation $\sum_{i \neq j} \g(x_i - x_j)$ appearing in the original energy. This is due to the fact that (after extension of the space) the interaction $\g$ is the kernel of a local operator, and relies on a simple integration by parts. Secondly, $\WN$ is defined through a truncation procedure, letting the truncation parameter $\eta$ to $0$ and substracting a divergent quantity $\cds \g(\eta)$ for each particle, this is the \textit{renormalization} feature (hence the name \textit{renormalized energy}).

\subsection{Point configurations, point processes, electric fields} \label{sec-ppp}
Before defining the relevant \textit{limit objects} (the energy and entropy terms appearing in the rate function of our large deviation principle), we introduce the functional setting as well as some notation.

\subsubsection{Generalities.}
If $X$ is a Polish space and $d_X$ a compatible distance, we endow the space $\probas(X)$ of Borel probability measures on $X$ with the distance:
\begin{equation} \label{DudleyDistance}
d_{\probas(X)}(P_1, P_2) : = \sup \left\lbrace \left| \int_X F (dP_1 - dP_2) \right|,  F \in \Lip(X) \right\rbrace
\end{equation}
where $\Lip(X)$ denotes the set of functions $F : X \rightarrow \R$ that are $1$-Lipschitz with respect to $d_X$ and such that $\|F\|_{\infty} \leq 1$. It is well-known that this metrizes the topology of weak convergence on $\probas(X)$.

If $P \in \probas(X)$ is a probability measure, we denote by $\Esp_P \left[ \cdot \right]$ the expectation under $P$.

For any $x \in \Rd$ and $R>0$ we denote $\carr_R(x)$ the hypercube of center $x$ and sidelength $R$ (all the hypercubes will have their sides parallel to the axes of $\Rd$). If $x$ is not specified we let $\carr_R = \carr_R(0)$.

\subsubsection{Configurations of points.} Let us list some basic definitions.

If $A$ is a Borel set of $\Rd$ we denote by $\config(A)$ the set of locally finite point configurations in $A$ or equivalently the set of non-negative, purely atomic Radon measures on $A$ giving an integer mass to singletons (see \cite{dvj}). The mass $|\mathcal {C}|(A)$ of $\mc{C}$ on $A$ corresponds to the number of points of the point configuration in $A$. We mostly use $\C$ for denoting a point configuration and we will write $\C$ for $\sum_{p \in \C} \delta_p$.

We endow  $\config := \config(\Rd)$ with the topology induced by the topology of weak convergence of Radon measure (also known as vague convergence or convergence against compactly supported continuous functions), and we define the following distance on $\config$
\begin{equation} \label{dconfig}
\dconfig(\C,\C') := \sum_{k \geq 1} \frac{1}{2^k} \frac{\sup \left\lbrace \left| \int_{\carr_k} f d\C - d\C'\right|, f \in \Lip(\Rd) \right\rbrace}{|\C|(\carr_k) + |\C'|(\carr_k)}.
\end{equation}
The subsets $\config(A)$ for $A \subset \Rd$ inherit the induced topology and distance.


We say that a function $F : \config \rightarrow \R$ is local when there exists $k \geq 1$ such that for any $\C \in \config$ it holds
 \begin{equation} \label{funlocal}
F \left(\mathcal{C}\right) = F \left(\mathcal{C} \cap \carr_k \right).
\end{equation}
We denote by $\Loc_k(\config)$ the set of functions that satisfies \eqref{funlocal}.

\begin{lem} \label{Ldistconfig} The following properties hold: 
\begin{itemize}
\item The topological space $\config$ is Polish.
\item The distance $\dconfig$ is compatible with the topology on $\config$. 
\item For any $\delta > 0$ there exists an integer $k$ such that
\begin{equation*}
\sup_{F \in \Lip(\config)} \sup_{\C \in \config} |F(\C) - F(\C \cap \carr_k)| \leq \delta.
\end{equation*} 
\end{itemize}
\end{lem}
Lemma \ref{Ldistconfig} is proven in Section \ref{sec:preuveLdistconfig}.  

The additive group $\Rd$ acts on $\config$ by translations $\{\theta_t\}_{t \in \Rd}$ as follows: 
\begin{equation*}
\mathcal{C} = \{x_i, i \in I\} \mapsto  \theta_t \cdot \mathcal{C} := \{x_i - t, i \in I\}.
\end{equation*}
We will use the same notation for the action of $\Rd$ on Borel sets of $\Rd$: if $A$ is Borel and $t \in \Rd$, we denote by $\theta_t \cdot A$ the translation of $A$ by the vector $-t$.

For any finite configuration $\C$ with $N$ points we consider the subset of $(\Rd)^N$ of $N$-tuples corresponding to $\C$ (by allowing all the point permutations). If $\calA$ is a family of finite configurations with $N$ points we denote by $\Leb^{\otimes N}(\calA)$ the Lebesgue measure of the corresponding subset of $(\Rd)^N$. 


\subsubsection{Point processes.} Strictly speaking, elements of $\config$ are \textit{point processes} and elements of $\probas(\config)$ are \textit{laws of point processes}. However, in this paper, in order to lighten the sentences, we make the following confusion: elements of $\config$ are called \textit{point configurations} (as above), a point process is defined as an element of $\probas(\config)$, and a tagged point process is a probability measure on $\Lambda \times \config$ where $\Lambda$ is some Borel set of $\R^{\d}$ with non-empty interior (usually $\Lambda$ will be $\Sigma$, the support of the equilibrium measure).

We impose by definition that the first marginal of a tagged point process $\bPst$ is the Lebesgue measure on $\Lambda$, normalized so that the total mass is $1$. As a consequence, we may consider the disintegration measures\footnote{We refer e.g. to \cite[Section 5.3]{AGS} for a definition.} $\{\bPst^x\}_{x \in \Lambda}$ of $\bPst$. For any $x \in \Lambda$, $\bPst^x$ is a probability measure on $\config$ and we have, for any $F \in C^0\left(\Lambda \times \config\right)$ 
\begin{equation*}
\Esp_{\bPst}[F] = \frac{1}{|\Lambda|} \int_{\Lambda} \Esp_{\bPst^x}[F(x, \cdot)] dx.
\end{equation*}

We denote by $\probas_s(\config)$ the set of translation-invariant (or stationary) point processes. We also call stationary a tagged point process such that the disintegration measure $\bPst^x$ is stationary for (Lebesgue-)a.e. $x \in \Lambda$ and we denote by $\probas_s(\Lambda \times \config)$ the set of stationary tagged point processes.  

If $P$ is stationary, we define its intensity as the quantity $\Esp_{P} \left[ \Numb_1 \right]$, where $\Numb_1(\C)$ denotes the number of points in the unit hypercube.

We will denote by $\probas_{s,1}(\config)$ the set of stationary point processes of intensity $1$ and by $\probas_{s,1}(\Lambda \times \config)$ the set of stationary tagged point processes such that
$$
\int_{x \in \Lambda} \Esp_{\bPst^x} [\Numb_1] dx = 1.
$$


\begin{remark} \label{memetopologie} We endow $\probas(\config)$ with the  topology of weak convergence of probability measures. Another natural topology on $\probas(\config)$ is \textit{convergence of the finite distributions} \cite[Section 11.1]{dvj2}, sometimes also called \textit{convergence with respect to vague topology for the counting measure of the point process}. These topologies coincide as stated in \cite[Theorem 11.1.VII]{dvj2}.
\end{remark}

\subsubsection{Electric fields.} Let $\p \in (1,2)$ be fixed, with 
\begin{equation}
\label{def:pmax}
\p < \p_{\rm{max}} := \min\left(2, \frac{2}{\gamma +1}, \frac{\d+\k}{\s+1}\right).  
\end{equation}

We define the class of electric fields as follows: let $\C$ be a point configuration and $m \geq 0$, let $E$ be a vector field in $\Lploc(\R^{\d+\k}, \R^{\d+\k})$, we say that $E$ is an \textit{electric field} compatible with $(\C, m)$ if\footnote{Compare with \eqref{HNeq}.}
\begin{equation}\label{eqclam}
-\div (\yg E) = \cds \left(\C - m \drd \right).
\end{equation}
We denote by $\Elec(\C, m)$ the set of such vector fields, by $\Elec_m$ the union over all configurations for fixed $m$, and by $\Elec$ the union over $m \geq 0$.

For any $E \in \Elec_m$, there exists a unique underlying configuration $\C$ such that $E$ is compatible with $(\C, m)$, we denote it by $\conf_m(\C)$.

We define an \textit{electric field process} as an element of $\probas(\Lploc(\R^{\d+\k}, \R^{\d+\k}))$ concentrated on $\Elec$, usually denoted by $\Pelec$. We say that $\Pelec$ is \textit{stationary} when it is invariant under the (push-forward by) translations $\theta_x \cdot E := E(\cdot - x)$ for any $x \in \R^{\d} \subset \R^{\d} \times \{0\}^k$. We say that $\Pelec$ is compatible with $(P, m)$, where $P$ is a point process, provided $\Pelec$ is concentrated on $\Elec_m$ and the push-forward of $\Pelec$ by the map $\conf_m$ coincides with $P$.

 Finally, we define a \textit{tagged electric field process} as an element of $\probas(\Sigma \times \Lploc(\R^{\d+\k}, \R^{\d+\k}))$ concentrated on $\Sigma \times \Elec$, usually denoted by  $\bPelec$, whose first marginal is the normalized Lebesgue measure on $\Sigma$. We say that $\bPelec$ is \textit{stationary} if for a.e. $x\in \Sigma$, the disintegration measure $\bar{P}^{\mathrm{elec},x}$ is stationary (in the previous sense).
%

\subsection{The renormalized energy: definition for infinite objects} \label{sec:renominfi}

\subsubsection{For an electric field.} Let $\C$ be a point configuration, $m \geq 0$ and let $E$ be in $\Elec(\C, m)$. We define the renormalized energy of $E$, following \cite{petrache2014next}, as follows.

For any $\eta \in (0,1)$ we define the truncation of $E$ as
\begin{equation} \label{defEeta1}
E_{\eta}(X) := E(X) - \sum_{x \in \C} \nabla \f_{\eta}(X-(x,0)),
\end{equation}
where $\f_{\eta}$ is as in \eqref{def:truncation}.

The renormalized energy of $E$ with background $m$ is obtained by first defining
\begin{equation} \label{Weta}
\mc{W}_\eta(E,m) := \limsup_{R \ti} \frac{1}{R^{\d}} \int_{\carr_R\times \R^k} \yg |E_{\eta}|^2 - m \cds \g(\eta),
\end{equation}
and finally\footnote{The existence of the limit as $\eta \to 0$ is proven in \cite{petrache2014next}.}
\begin{equation}\label{defW}
\mc{W}(E,m) := \lim_{\eta\to 0} \mc{W}_\eta(E,m).
\end{equation}

The name \textit{renormalized energy} (originating from \cite{bbh} in the context of two-dimensional Ginzburg-Landau vortices) reflects the fact that the integral of $\yg |E|^2 $ is infinite, and is computed in a renormalized way by first applying a truncation and then removing the appropriate divergent part $\cds\g(\eta)$.

\subsubsection{For an electric field process.}
If $\Pelec \in \probas(\Elec)$, and $m \geq 0$ we define 
\begin{equation} \label{deftW}
\tW_\eta(\Pelec, m):= \Esp_{\Pelec} \left[ \mc{W}_{\eta}( \cdot, m) \right] \qquad \tW(\Pelec, m) :=  \Esp_{\Pelec} \left[ \mc{W}( \cdot, m) \right].
\end{equation}

Let $\bPelec \in \probas(\Sigma \times \Elec)$ be a tagged electric field process such that for a.e. $x \in \Sigma$,  the disintegration measure $\bar{P}^{\mathrm{elec},x}$ is concentrated on $\Elec_{\meseq(x)}$. We define 
\begin{equation}\label{238b}
\mc{W} (\bPelec, \meseq):= \int_{\Sigma} \tW( \bar{P}^{\mathrm{elec},x}, \meseq(x)) dx.
\end{equation}

\subsubsection{For a point configuration.} 
%

Let $\C$ be a point configuration and $m \geq 0$. We define the renormalized energy of $\C$ with background $m$ as
\begin{equation*}
\W(\mc{C},m) := \frac{1}{\cds} \inf\{\mc{W}(E,m) \ | \ E \in \Elec(\C, m) \}
\end{equation*}
with the convention $\inf (\varnothing) = +\infty$.

In Section \ref{sec:proofinfatteint}, we prove the following:
\begin{lem}\label{infatteint}
Let $\C, m$ be fixed. If $\k =0$, two elements of $\Elec(\C, m)$ with finite energy differ by a constant vector field, and if $\k=1$, there is at most one element in $\Elec(\C,m)$ with finite energy. In all cases, the $\inf$ in the definition of $\W(\C,m)$ is a uniquely achieved minimum.
\end{lem}

\subsubsection{For a point process.} \label{sec:defenergie}
Let $\Pst$ be in $\probas(\config)$ and $m \geq 0$. We define its renormalized energy with background $m$ as
\begin{equation*}
\ttW(\Pst, m) := \Esp_{P}\left[\W(\cdot, m)\right].
\end{equation*}

Finally, if $\bPst \in \probas(\Sigma \times \config)$ is a tagged point process, we define its \textit{renormalized energy with background measure $\meseq$} as 
\begin{equation*}
\bttW(\bPst, \meseq) := \int_{\Sigma} \ttW(\bPst^{x}, \meseq(x)) dx.
\end{equation*}

\subsection{The specific relative entropy} \label{sec:defentropy}
We conclude by defining the second term appearing in the rate function $\fbarbeta$ (see \eqref{def:bfbeta}), namely the specific relative entropy. 

For any $m \geq 0$, we denote by $\Poisson^m$ the (law of the) Poisson point process of intensity $m$ in $\Rd$, which is an element of $\probas_s(\config)$. Let $P$ be in $\probas_s(\config)$. The specific relative entropy of $P$ with respect to $\Poisson^1$ is defined as
\begin{equation} \label{def:ERS}
\ERS[P|\Poisson^1] := \lim_{R \ti} \frac{1}{R^{\d}} \ent[P_{\carr_R}|\Poisson^1_{\carr_R}],
\end{equation}
where $P_{\carr_R}, \Poisson^1_{\carr_R}$ denote the restriction of the processes to the hypercube $\carr_R$. Here, $\ent[\cdot|\cdot]$ denotes the \textit{usual} relative entropy of two probability measures defined on the same probability space, namely
\begin{equation*}
\ent[\mu|\nu] := \int \frac{d\mu}{d\nu} \log \left(\frac{d\mu}{d\nu}\right)  d\nu,
\end{equation*}
if $\mu$ is absolutely continuous with respect to $\nu$, and $+ \infty$ otherwise. 

\begin{lem} \label{lem:ERS} The following properties are known:
\begin{itemize}
\item The limit in \eqref{def:ERS} exists for $P$ stationary.
\item The map $P \mapsto \ERS[P |\Poisson^1]$ is affine and lower semi-continuous on $\probas_s(\config)$.
\item The sub-level sets of $\ERS[\cdot| \Poisson^1]$ are compact in $\probas_s(\config)$ (it is a \textit{good} rate function).
\item We have $\ERS[P| \Poisson^1] \geq 0$ and it vanishes only for $P = \Poisson^1$.
\end{itemize}
\end{lem}
\begin{proof}
We refer to \cite[Chapter 6]{seppalainen} for a proof. The first point follows from sub-additivity, the thrid and fourth ones from usual properties of the relative entropy. The fact that $\ERS[\cdot|\Poisson^1]$ is an \textit{affine} map, whereas the classical relative entropy is strictly convex, is due to the infinite-volume limit taken in \eqref{def:ERS}.  
\end{proof}

Now, if $\bar{P}$ is in $\probas_{s}(\Sigma \times \config)$, we define the tagged relative specific entropy as
\begin{equation}
\label{def:bERS} \bERS[\bPst|\Poisson^1] := \int_{\Sigma} \ERS[\bPstx|\Poisson^1] dx.
\end{equation}

\section{Preliminaries on the energy}\label{sec:prelim}
\subsection{Connection between next-order and renormalized energy}
The renormalized energy, defined in Section \ref{sec:renominfi} for infinite objects, was derived in previous works  as a certain limit of the next-order energy $\WN$ (defined for finite $N$) which appears in the \textit{splitting} identities \eqref{splitlog}, \eqref{splits}. In particular, we have the following lower bound.
\begin{prop} \label{prop:LowerBoundenergies} 
Let $\{\XN\}_N$ be a sequence of $N$-tuples of points in $\Rd$, and assume that $\{\WN(\XN, \meseq)\}_N$ is bounded. Then, up to extraction, the sequence $\{\bEmp_N(\XN)\}_N$ converges to some $\bP$ in $\probas_s(\Sigma \times \config)$, and we have
\begin{equation} \label{gliminf}
\liminf_{N \ti} \WN(\XN, \meseq) \geq \bttW(\bP, \meseq).
\end{equation}
\end{prop}
\begin{proof}
This follows from \cite[Proposition 5.2]{petrache2014next}  and our definitions.
\end{proof}

\subsection{Discrepancy estimates}
In this section we give estimates to control the discrepancy between the number of points in a domain and the expected number of points according to the background intensity, in terms of the energy. These estimates show that local non-neutrality of the configurations has an energy cost, which in turn implies that stationary point processes of finite energy must have small discrepancies. For simplicity we only consider processes of intensity $1$, but the results extend readily to the general case of intensity $m > 0$.

For a given point configuration $\C$, we denote by $\Numb_{R}(x)$ the \textit{number of points} of a configuration in $\carr_{R}(x)$ and by $\dis_R(x)$ the \textit{discrepancy} in $\carr(x,R)$, defined as
\begin{align}
\label{def:Numb} &\Numb_R(x)(\C) := |\C|(\carr_R(x)) \\
\label{def:Dis} &\dis_R(x)(\C) := \Numb_R(x)(\C)  - R^{\d}. 
\end{align}
If  $x$ is not specified, we let $\Numb_R(\C) = \Numb_R(0)(\C), \dis_R(\C) := \dis_R(0)(\C)$.
\begin{lem} \label{LemmeDiscr} Let $\Pst$ be in $\probas_s(\config)$.
If $\ttW(P, 1)$ is finite, then $\Pst$ has intensity $1$. Moreover, we have
\begin{equation}\label{borneDiscr}
\Esp_{\Pst}\left[\dis_R^2 \right] \leq C (C+  \ttW(\Pst,1)) R^{\d+\s},
\end{equation}
where $C$ depends only on $\d, \s$.
\end{lem}
\begin{proof}
We postpone the proof of Lemma \ref{LemmeDiscr}  to Section \ref{sec:proofLemDiscr}.
\end{proof}

In particular, in the \LogD \ case, \eqref{borneDiscr} yields
\begin{equation*}
\Esp_{\Pst} \left[ \dis_R^2\right] = O(R^{2}),
\end{equation*}
hence the variance of the number of points for a process of finite energy is comparable to that of a Poisson point process. It is unclear to us whether this estimate is sharp or not. 

In the \LogU \ case, the same argument can be used (see again Section \ref{sec:proofLemDiscr} for a proof) to get the following.
\begin{remark} \label{rem:Discr1d}
Let $P$ be  in $\probas_{s,1}(\config)$ such that $\ttW(P,1)$ is finite, in the $\LogU$ case. Then we have 
\begin{equation} \label{discrcas1d}
\liminf_{R \ti} \frac{1}{R} \Esp_{P} [ \dis_R^2 ] = 0.
\end{equation}
In particular the Poisson point process $\Poisson^1$ has infinite renormalized energy for $\d=1,\s=0$.
\end{remark}

\subsection{Almost monotonicity of the energy and truncation error}
The following lemma, taken from \cite{petrache2014next}, expresses the fact that the limit $\eta \to 0$ defining $\WN$ as in \eqref{defWN} is almost monotonous. It also provides an estimate on the truncation error.
\begin{lem}\label{prodecr}
Let $\XN$ be a $N$-tuple in $\Rd$ and let $\hpN$ be as in \eqref{def:hpN}. For any $0 < \tau < \eta < \hal$, we have
\begin{multline*}
- C N \|\muv\|_{L^\infty}  \eta^{\frac{\d-\s}{2}} \le \left(\int_{\R^{\d+\k}}\yg |\nab H'_{N,\tau}|^2 - N \cds \g(\tau) \right) -
\left(\int_{\R^{\d+\k}}\yg |\nab \hpNe|^2 - N \cds  \g(\eta) \right)
\\ \le  C N \|\muv\|_{L^\infty} \eta^{\frac{\d-\s}{2}} +
\cds \sum_{i\neq j,  |x_i-x_j|\le 2\eta}  \g(x_i-x_j).
\end{multline*}
for some constant $C$ depending only on $\d$ and $\s$. 

In particular, sending $\tau \t0$ we get
\begin{multline}\label{truncaestimate}
o_\eta(1) \le  \WN(\XN, \meseq) -  \left(\frac{1}{\cds N} \int_{\R^{\d+\k}}\yg |\nab H'_{N,\eta}|^2 -  \g(\eta) \right)\\ \le 
   o_\eta(1) + \frac{1}{N} \sum_{i\neq j,  |x_i-x_j|\le 2\eta} \g(x_i-x_j).
\end{multline} 
where the error term $o_\eta(1)$ is independent of the configuration.
\end{lem}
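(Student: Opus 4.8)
The plan is to reduce everything to a pointwise statement about a single pair of points, then sum. First I would recall the precise content of \cite[Lemma 2.3]{PetSer}: for a fixed point $p$ and $1/2>\eta>\tau>0$, the quantity $\int \yg|\nab(f_\tau-f_\eta)(X-p)|^2$, which measures the change in self-energy when passing from truncation level $\eta$ to level $\tau$ at the point $p$, equals $\cds(\g(\tau)-\g(\eta))$ up to an error controlled by the interaction of $p$ with the neighbouring points and with the background; the monotonicity of $\g$ enters precisely to sign the ``missing mass'' terms in the right way. The key algebraic identity is that $H'_{N,\tau}=H'_{N,\eta}-\sum_i(f_\tau-f_\eta)(\cdot-x_i')$, so that expanding the square gives
\begin{equation}\label{expansion-plan}
\int \yg|\nab H'_{N,\tau}|^2 - \int \yg|\nab H'_{N,\eta}|^2 = \sum_i \int \yg|\nab(f_\tau-f_\eta)(X-x_i')|^2 + \text{cross terms},
\end{equation}
and the cross terms are handled by integrating by parts against the defining equation \eqref{HNeq} for $H'_{N,\eta}$, using that $f_\tau-f_\eta$ is supported in $B(0,\eta)$. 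This is where the $\c\sum_{i\ne j,\,|x_i-x_j|\le 2\eta}\g(|x_i-x_j|)$ term appears (pairs at distance $\le 2\eta$ contribute a genuine interaction through the overlapping supports) and where the $CN\|\muv\|_{L^\infty}\eta^{(d-s)/2}$ term appears (the interaction of each truncated peak with the H\"older-bounded background measure $\muv'$ over a ball of radius $\eta$, of which there are $N$).

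Second, I would assemble the bound. Each diagonal term in \eqref{expansion-plan} is exactly $\cds(\g(\tau)-\g(\eta))$ by the computation of $\delta_0^{(\eta)}$ around \eqref{divf}, so $\sum_i$ of these is $N\cds(\g(\tau)-\g(\eta))$, which is precisely what makes the renormalizing constants $N\c\g(\tau)$ and $N\c\g(\eta)$ cancel correctly in the difference stated in the lemma. The lower bound has no interaction term because the ``missing'' pair contributions are nonnegative by monotonicity of $\g$ (truncating more can only lower the counted interaction), so only the $-CN\|\muv\|_{L^\infty}\eta^{(d-s)/2}$ background error survives. The upper bound keeps in addition the short-range pair sum. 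The exponent $\tfrac{d-s}{2}$ is what one gets from $\int_{B(0,\eta)}|\nab f_\eta|\,\yg$ against a bounded density, after using \eqref{gs} relating $\gamma,d,k,s$; I would just cite this estimate from \cite{PetSer} rather than redo it.

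Third, the passage to \eqref{truncaestimate}: send $\tau\to0$. By the very definition \eqref{defWN}, $N\c\WN(x_1,\dots,x_N)=\lim_{\tau\to0}\big(\int\yg|\nab H'_{N,\tau}|^2-N\c\g(\tau)\big)$, so taking the limit in the two-sided bound just proved (the right-hand side and left-hand side are independent of $\tau$) yields exactly \eqref{truncaestimate}, with $o_\eta(1)=C\|\muv\|_{L^\infty}\eta^{(d-s)/2}$, which tends to $0$ as $\eta\to0$ since $s<d$, and is manifestly independent of the configuration. The existence of the limit defining $\WN$ is itself part of \cite{PetSer} and I would invoke it.

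The main obstacle is the careful bookkeeping of the cross terms in \eqref{expansion-plan}: one must integrate by parts on $\R^{d+k}$ with the inhomogeneous weight $\yg$ (which is only an $A_2$-Muckenhoupt weight, $\gamma\in(-1,1)$), justify that all the integrals are finite, and split the resulting sum $\sum_{i,j}$ of pair interactions into the near-diagonal part (distance $\le 2\eta$, giving the genuine $\g(|x_i-x_j|)$ sum) and the far part (distance $>2\eta$, where disjoint supports of the truncation bumps make the contribution vanish). But since \cite[Lemma 2.3]{PetSer} already packages precisely this computation, the work here is essentially to quote it with the monotonicity of $\g$ used to drop the nonnegative missing-interaction terms on the lower-bound side, and to rename constants. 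No genuinely new difficulty arises.
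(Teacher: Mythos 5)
Your proposal follows essentially the same route as the paper: the two-sided estimate is obtained by quoting \cite[Lemma 2.3]{PetSer} together with the monotonicity of $\g$, and \eqref{truncaestimate} then follows by letting $\tau\to 0$ and using the definition \eqref{defWN} of $\WN$ as a limit, with $o_\eta(1)=C\|\muv\|_{L^\infty}\eta^{(d-s)/2}$. Your additional sketch of the internal expansion of the cited lemma (the decomposition $H'_{N,\tau}=H'_{N,\eta}-\sum_i(f_\tau-f_\eta)(\cdot-x_i')$ and the bookkeeping of cross terms) is consistent with that reference and not needed beyond the citation, so no gap arises.
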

\begin{proof}
This follows from \cite[Lemma 2.3]{petrache2014next}. 
\end{proof}

We will also need a \textit{lower bound} on the truncation error, as follows.
\begin{lem}\label{discerrtronc}
Let $\bP$ be in $\probas_{s}(\Sigma \times \config)$, such that $\bttW(\bP, \meseq)$ is finite. For any $\eta \in (0,1)$ and $\tau \in (0, \eta^2/2)$ we have
\begin{multline}\label{eqlemdisc}
\bttW_{\tau} (\bP, \meseq)- \bttW_\eta(\bP, \meseq)
\geq  C \frac{\g(2\tau) }{\tau^\d} \Esp_{\bar{P}} [ (\Numb_{\tau}^2 -1)_+ ] \\  
+ C \Esp_{\bar{P}} \left[  \sum_{p \neq q \in \mc{C} \cap \carr_1 , |p-q| \le \eta^2/2 } \g(p-q) \right]  - o_{\eta}(1),
\end{multline} 
with a  $C>0$ and $o_{\eta}$ depending only on $\d,\s$.
\end{lem}
\begin{proof}
We postpone the proof to Section \ref{sec:preuvelowerboundtrunca}.
\end{proof}

\subsection{Compactness results for electric fields}
\begin{lem} \label{compactElec} Let $K$ be some hyperrectangle in $\Rd$, let $\{E^{(n)}\}_{n}$ be a sequence of vector fields in $\Lploc(K, \R^{\d+\k})$,  let $\{\mc{C}_n\}_n$ be a sequence of point configurations in $K$ and $\{\mu_n\}_n$ be a sequence of bounded measures in $K$, such that $\{\mc{C}_n\}_n$ converges to some $\mc{C}$ in $\config(K)$ and that $\{\mu_n\}_n$ converges to some $\mu$ (in $L^{\infty}(K)$).  

Assume that for any $n \geq 1$, we have
\begin{equation}\label{eqstru}
-\div (\yg E^{(n)} )= \cds \left( \mc{C}_n - \mu_n \delta_{\Rd} \right) \text{ in } K\times \R^{\k},
\end{equation}
Moreover, let $\eta \in (0,1)$, and assume that $\{\int_{K \times \R^k} \yg |E^{(n)}_{\eta}|^2\}_{n}$ is bounded. 

Then there exists a vector field $E$ satisfying
\begin{equation} \label{passerlimiteEnE}
-\div(\yg  E) = \cds \left( \mc{C} - \mu \delta_{\Rd} \right) \text{ in } K \times \R^\k,
\end{equation}
and such that for any $z \in [0, +\infty]$
\begin{equation}\label{compiteE1}
\int_{K \times [-z,z]^\k} \yg |E_{\eta}|^2 \leq \liminf_{n + \infty} \int_{K \times [-z,z]^\k} \yg |E^{(n)}_{\eta}|^2.
\end{equation}
Moreover, if $\k =1$, for any $z \geq 1$ we have\footnote{We are looking at the field on the additional axis away from $\R^{\d} \times \{0\}$,  and $E_{\eta}$  coincides with $E$  there.}
\begin{equation} \label{compiteE2}
\int_{K \times \left(\Rd \backslash (-z, z)\right)} \yg |E|^2 \leq \liminf_{n + \infty} \int_{K \times \left(\R \backslash (-z, z)\right)} \yg |E^{(n)}|^2. 
\end{equation}
\end{lem}
\begin{proof} 
By H\"older's inequality  the space $L^2_{\yg}$ embeds locally into $L^{\p} $ for $\p \le \p_{\rm{max}}$ defined in \eqref{def:pmax}, and thus for any electric field we have
$$\|E_\eta\|_{L^{\p}(\carr_R\times \R^\k)}^2 \le C_R\int_{\carr_R \times \R^\k} \yg |E_\eta|^2.$$
In addition, using \eqref{defEeta1}, we have
\begin{equation}\label{Elp}
\|E\|_{L^{\p}(\carr_R\times \R^\k)}\le \|E_{\eta}\|_{L^{\p}(\carr_R\times \R^\k)}+ C_\eta \Numb_{R+1} .\end{equation}
Since the sequence $\{E^{(n)}_{\eta}\}_n$ is  bounded in $L^2_{\yg}(K \times \R^\k, \R^{\d+\k})$, and since the number of points in each cube $\carr_R$ is uniformly bounded by convergence of $\C_n$, we deduce that 
$\|E^{(n)}\|_{L^{\p} (\carr_R)} $ is bounded for each  $R>1$, 
 hence we may find a weak limit point $E$ in $L^{\p}_{\rm{loc}}$ which will satisfy \eqref{passerlimiteEnE}
 by taking the limit  as $n \to \infty$ in \eqref{eqstru} in the distributional sense. Lower semi-continuity as in \eqref{compiteE1} and \eqref{compiteE2} is then a consequence of the weak convergence.
\end{proof}

We also state a compactness result for stationary electric processes with bounded energy.
\begin{lem} \label{compacPelec}
Let $\{\Pelec_n\}_n$ be a sequence of stationary electric processes concentrated on $\Elec_1$ such that $\{\tW(\Pelec_n, 1)\}_n$ is bounded. Then, up to extraction, the sequence $\{\Pelec_n\}_n$ converges to a stationary electric process $\Pelec$ concentrated on $\Elec_1$ and such that
\begin{equation}
\label{lsctW1} \tW(\Pelec,1) \leq \liminf_{n \ti} \tW(\Pelec_n,1).
\end{equation}
\end{lem}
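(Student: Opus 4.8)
We argue by tightness: extract a subsequential weak limit $\Pelec$ of the $\{\Pelec_n\}_n$, check that it is stationary and concentrated on $\mc{A}_1$, and then establish the lower semicontinuity \eqref{lsctW1}; this follows the standard scheme for such energy functionals, cf. \cite{PetSer}. \emph{Tightness.} Fix $\eta_0\in(0,1)$. Each $\Pelec_n$ is stationary and carried by $\mc{A}_1$, so Lemma \ref{lemstat}, applied to the nonnegative local density $X\mapsto\yg|E_{\eta_0}(X)|^2$, gives for every $R>0$
\[
\Esp_{\Pelec_n}\!\Big[\int_{\carr_R\times\R^k}\yg|E_{\eta_0}|^2\Big]=R^d\,\Esp_{\Pelec_n}\!\Big[\int_{\carr_1\times\R^k}\yg|E_{\eta_0}|^2\Big]=R^d\big(\tW_{\eta_0}(\Pelec_n)+\cds\g(\eta_0)\big),
\]
which is bounded uniformly in $n$: indeed $\mc{W}_{\eta_0}\le\mc{W}+C\eta_0^{(d-s)/2}$ by the almost-monotonicity of $\eta\mapsto\mc{W}_\eta$ (see the proof of Lemma \ref{prodecr2}, or \cite[(2.29)]{PetSer}), so $\tW_{\eta_0}(\Pelec_n)\le\tW(\Pelec_n)+C\eta_0^{(d-s)/2}$ and $\{\tW(\Pelec_n)\}_n$ is bounded by hypothesis. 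Likewise the expected number of points of $\conf(E)$ in any bounded set is bounded uniformly in $n$ (the energy controls the number of points, cf. \cite{PetSer}; in fact the intensity equals $1$). By Markov's inequality we can build, for each $\delta>0$, a measurable set $K_\delta\subset\mc{A}_1$ of the form $\{E:\Nn(0,R)(\conf E)\le C_\delta(R)$ and $\int_{\carr_R\times\R^k}\yg|E_{\eta_0}|^2\le C_\delta(R)$ for all $R\in\N\}$ with $\Pelec_n(K_\delta)\ge1-\delta$ for all $n$; combining Lemma \ref{CompactConfig}, Lemma \ref{compactElec} (used along an exhaustion of $\R^{d+k}$ by cubes, with a diagonal extraction) and Lemma \ref{DeWaLp}, the closure of $K_\delta$ is compact in $\Lploc$. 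Prokhorov's theorem then yields tightness, and a subsequence (not relabelled) converges weakly to some $\Pelec\in\probas(\Lploc)$.

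\emph{The limit lies in $\probas_s(\mc{A}_1)$.} Stationarity passes to the limit because the translations $\theta_x$ act continuously on $\Lploc$ and each $\Pelec_n$ is $\theta_x$-invariant. Membership $E\in\mc{A}_1$ — that $\conf(E):=\frac{-\div(\yg E)}{\cds}+\drd$ be a locally finite integer-valued configuration — is preserved under convergence in $\Lploc$ on each $K_\delta$ (the structure equation is stable distributionally, and the point counts stay locally bounded), so $\overline{K_\delta}\subset\mc{A}_1$ and $\Pelec(\mc{A}_1)\ge\Pelec(\overline{K_\delta})\ge\limsup_n\Pelec_n(K_\delta)\ge1-\delta$; letting $\delta\to0$ gives $\Pelec(\mc{A}_1)=1$.

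\emph{Lower semicontinuity.} Fix $\eta\in(0,1)$ and $z>0$ and set $\Psi_{\eta,z}(E):=\int_{\carr_1\times[-z,z]^k}\yg|E_\eta|^2\ge0$. First, $\Psi_{\eta,z}$ is lower semicontinuous on $\Lploc$: along a sequence $E_n\to E$ realizing the (finite) $\liminf$, one has $\conf(E_n)\to\conf(E)$ vaguely, hence $\nabla f_\eta\star\conf(E_n)\to\nabla f_\eta\star\conf(E)$ in $L^1(\carr_1\times[-z,z]^k)$ since $\nabla f_\eta\in L^1(\R^{d+k})$ and translation is $L^1$-continuous; as $\{E_{n,\eta}\}$ is then bounded in $L^2_{\yg}(\carr_1\times[-z,z]^k)$, a weak limit of $E_{n,\eta}$ must be $E_\eta$, and weak lower semicontinuity of the $L^2_{\yg}$-norm gives $\Psi_{\eta,z}(E)\le\liminf_n\Psi_{\eta,z}(E_n)$. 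The portmanteau theorem for nonnegative l.s.c. functions, together with Lemma \ref{lemstat}, then yields
\[
\Esp_{\Pelec}[\Psi_{\eta,z}]\le\liminf_n\Esp_{\Pelec_n}[\Psi_{\eta,z}]\le\liminf_n\Esp_{\Pelec_n}\!\Big[\int_{\carr_1\times\R^k}\yg|E_\eta|^2\Big]=\liminf_n\big(\tW_\eta(\Pelec_n)+\cds\g(\eta)\big).
\]
Letting $z\to\infty$ (monotone convergence) and using Lemma \ref{lemstat} for the stationary $\mc{A}_1$-supported process $\Pelec$ gives $\tW_\eta(\Pelec)+\cds\g(\eta)=\Esp_{\Pelec}[\int_{\carr_1\times\R^k}\yg|E_\eta|^2]\le\liminf_n\tW(\Pelec_n)+C\eta^{(d-s)/2}+\cds\g(\eta)$, hence $\tW_\eta(\Pelec)\le\liminf_n\tW(\Pelec_n)+C\eta^{(d-s)/2}$ for every $\eta\in(0,1)$. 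Since $\mc{W}(E)=\lim_{\eta\to0}(\mc{W}_\eta(E)+C\eta^{(d-s)/2})$ pointwise and the $\mc{W}_\eta$ are uniformly bounded below, Fatou's lemma gives $\tW(\Pelec)\le\liminf_{\eta\to0}(\tW_\eta(\Pelec)+C\eta^{(d-s)/2})\le\liminf_n\tW(\Pelec_n)$, which is \eqref{lsctW1}.

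The main obstacle is this last step: $\mc{W}$ and $\mc{W}_\eta$ cannot be passed to the limit directly, since the truncation $E\mapsto E_\eta$ couples the only weakly convergent field with the singular point contributions and the infinite-volume averages defining $\mc{W}_\eta$ are not continuous. The cure is to trade those averages for a single bounded-box integral via stationarity, to localize in the variable $y$ so as to reduce to weak lower semicontinuity of a genuine $L^2$-norm, and only then to send $\eta\to0$, absorbing the truncation error into the $O(\eta^{(d-s)/2})$ term; one also uses the standard facts that the energy controls point counts (for tightness) and that $\mc{A}_1$ and stationarity are closed conditions.
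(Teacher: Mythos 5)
Your proof is correct and follows essentially the same route as the paper's: compactness of $\{\Pelec_n\}_n$ from the uniform local truncated-energy bounds (via stationarity, Lemma \ref{lemstat} and Lemma \ref{DeWaLp}), weak lower semicontinuity of the box-truncated energy exactly as in Lemma \ref{compactElec}, and stationarity to convert $\tW$ into a fixed-box expectation. Your treatment of the final $\eta\to0$ step (the almost-monotonicity $\mc{W}_\eta\le \mc{W}+C\eta^{\frac{d-s}{2}}$ combined with Fatou) is a slightly more explicit version of what the paper does tersely, but it is the same idea.
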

\begin{proof} 
By stationarity we have for any $R > 0$
\begin{equation} \label{Pelecnstationarity}
\tW_{\eta}(\Pelec_n,1)= \Esp_{\Pelec_n} \left[ \frac{1}{R^{\d}} \int_{\carr_R \times \R^k} \yg |E_\eta|^2 \right]- \cds \g(\eta),
\end{equation}
and thus by boundedness of the energy and the monotonicity of Lemma \ref{prodecr}, for any  $\eta<1$ fixed and for every $R$ we have 
$$\Esp_{\Pelec_n} \left[  \int_{\carr_R \times \R^\k} \yg |E_\eta|^2 \right]\le C_R.$$
Using \eqref{Elp} and the fact that  $\Numb_{R+1}$ is bounded by a constant depending only on $R$ in view of Lemma \ref{LemmeDiscr}, we deduce that 
$$\Esp_{\Pelec_n} \left[ \|E\|_{L^{\p}(\carr_R\times \R^\k)}^2  \right]\le C_{R,\eta}.$$
This immediately implies the tightness of $\Pelec$ for the $L^{\p}_{\rm{loc}}$ topology, and the existence of a limit point supported in $\Elec$.
 The function $E \mapsto \int_{\carr_R \times \R^\k} \yg |E_\eta|^2$ is lower semi-continuous, thus if $\Pelec$ is a limit point we have, for any $\eta > 0$
\begin{equation*}
\liminf_{n\to \infty} \tW_{\eta}(\Pelec_n,1) \ge \tW_{\eta}(\Pelec,1).
\end{equation*}
Sending $\eta $ to $ 0$ yields \eqref{lsctW1}.
\end{proof}

\subsection{Some properties of the renormalized energy}
We begin by the following technical lemma.
\begin{lem}  \label{lem:concordance}
Let $\Pst$ be in $\probas_s(\config)$ such that $\ttW(\Pst,1)$ is finite. We have
\begin{equation} \label{ecritureconcordance}
\ttW(\Pst,1) = \min \left\lbrace \Esp_{\Pelec}\left[\tW( \cdot, 1)\right] \ | \ \Pelec \textit{ stationary and compatible with } (P,1) \right\rbrace.
\end{equation}
\end{lem}
The proof of Lemma \ref{lem:concordance} is given in Section \ref{sec:preuveconcordance}.

We now study the regularity properties of the renormalized energy \textit{at the level of stationary point processes}.
\begin{lem} \label{lem:LSCW}
The map $P \mapsto \ttW(P,1)$ is lower semi-continuous on the space $\probas_{s}(\config)$, and its sub-level sets are compact.
\end{lem}
\begin{proof}
Let $P$ be in $\probas_s(\config)$ and let $\{P_n\}_n$ be a sequence of stationary point processes converging to $P$. We want to show that
\begin{equation*}
\liminf_{n \ti} \ttW(P_n,1) \geq \ttW(P,1).
\end{equation*}
We may assume that the left-hand side is finite (otherwise there is nothing to prove), and up to extraction we may also assume that the $\liminf$ is a $\lim$. 

By Lemma \ref{lem:concordance}, for each $n \geq 1$ we may find  a stationary electric process $\Pelec_n$  compatible with $(P_n,1)$ and such that
$$
\Esp_{\Pelec_n}\left[\tW( \cdot, 1) \right] = \ttW(P_n,1).
$$
The sequence $\{ \tW(\Pelec_n, 1)\}$ is bounded, which together with Lemma \ref{compacPelec} implies that up to extraction we have $\Pelec_n \to \Pelec $ for some electric process $\Pelec$ which is stationary and compatible with $(P,1)$.

Moreover, combining \eqref{lsctW1} with the fact that $\tW(\Pelec_n, 1) = \ttW(P_n, 1)$ for each $n$, and that $\ttW(P,1) \leq \tW(\Pelec,1)$ (by definition), we get 
$$
\liminf_{n \ti} \ttW(P_n,1) \geq \ttW(P,1),
$$
which proves that $\ttW(\cdot, 1)$ is lower semi-continuous on $\probas_s(\config)$. Also, we know from \cite{petrache2014next} that $\mc{W}(\cdot, 1)$ is bounded below hence so is $\ttW(\cdot, 1)$. 

Compactness of the sub-level sets is elementary, indeed from Lemma \ref{LemmeDiscr} we know that if $\ttW(P,1)$ is finite then $P$ has intensity $1$, but a family of stationary point processes with fixed intensity is tight in $\probas(\config)$.
\end{proof}

\subsection{Minimality of the local energy}
%
For any $N \geq 1$, and any $\XN$ in $(\Rd)^N$, we have introduced in \eqref{def:hpN} the potential $\hpN$ and its gradient was called the \textit{local electric field}. Adding a solution of $-\div(\yg E)=0$ to this local electric field yields another vector field such that
\begin{equation} \label{ENversuslocal}
- \div(\yg E_N) = \cds \left(\sum_{i=1}^N \delta_{x'_i} - \mu'_V \delta_{\Rd} \right).
\end{equation}
The following lemma shows that among all $E_N$ satisfying \eqref{ENversuslocal}, the local electric field $\nabla \hpN$ has a smaller energy than any \textit{screened} electric field (in a sense made precise). The reason is that $\nabla \hpN$ is an $L^2_{\yg}$-orthogonal projection of any generic compatible $E_N$ onto gradients, and that the projection decreases the $L^2_{\yg}$-norm.
\begin{lem} \label{minilocale} Let $K$ be a compact subset of $\R^{\d}$ with piecewise $C^1$ boundary, let $N \geq 1$, let $\XN$ be in $(\Rd)^N$. We assume that all the points of $\XN$ belong to $K$ and that $\Sigma \subset K$. 

Let $E$ be a vector field in $\Lploc(\R^{\d+\k}, \R^{\d+\k})$ such that 
  \begin{equation}\label{checr}
\left\lbrace \begin{array}{ll}
 -\div (\yg E) 
= \cds \left( \sum_{i=1}^N \delta_{x'_i} - \meseq' \drd\right) & \text{in} \ K\times \R^\k \\
 E \cdot \vec{\nu} = 0  & \text{on} \  \partial  K \times \R^\k.
 \end{array}\right.
\end{equation} 
We let also $\hpN$ be as in \eqref{def:hpN}. Then, for any $\eta \in (0,1)$ we have
\begin{equation}\label{comparloc}
\int_{\R^{\d+\k}} \yg |\nabla \hpNe|^2 \leq \int_{K\times \R^{k}} \yg |E_{\eta}|^2.
\end{equation}
\end{lem}
\begin{proof}
First we note that we may extend $E$ by $0$ outside of $K$, and since $E \cdot \vec{\nu}$ is continuous across $\partial  K$, no divergence is created there, so that the vector field $E$ satisfies 
\begin{equation}
\label{diveg}
- \div (\yg E)= - \div (\yg \nabla \hpN) \quad \text{in} \ \R^{\d+\k}.
\end{equation}

Let us also observe that since the electric system $\XN$ with background $\meseq$ is globally neutral, the field $\hpN$ decays as $|x|^{-\s-1}$ as $|x|\to \infty$ in $\R^{\d+\k}$ and $\nabla \hpN$ decays like $|x|^{-\s-2}$ (still with the convention $\s=0$ in the logarithmic cases).

If the right-hand side of \eqref{comparloc} is infinite, then there is nothing to prove. If it is finite,  given $M>1$, and letting $\chi_M$ be a smooth nonnegative function equal to $1$ in $\carr_M\times [-M,M]^{\k}$ and $0$ at distance $\ge 1$ from $\carr_M \times [-M,M]^{\k}$,  we may write
\begin{multline}
\int_{\R^{\d+\k}} \chi_M \yg |E_{\eta}|^2 = \int_{\R^{\d+\k}}\chi_M \yg |E_{\eta} - \nabla \hpNe|^2 + \int_{\R^{\d+\k}}\chi_M \yg |\nabla \hpNe|^2 \\+ 2 \int_{\R^{\d+\k}}\chi_M  \yg (E_{\eta} - \nabla \hpNe) \cdot \nabla \hpNe
\\ \geq 
 \int_{\R^{\d+\k}}\chi_M  \yg|\nabla \hpNe|^2 + 2 \int_{\R^{\d+\k}}\chi_M  \yg(E_{\eta} - \nabla \hpNe) \cdot \nabla \hpNe \\ =  \int_{\R^{\d+\k}}\chi_M \yg |\nabla \hpNe|^2 + 2 \int_{\R^{\d+\k}}  \hpNe \yg (E_{\eta} - \nabla \hpNe) \cdot \nab \chi_M,
\end{multline}
where we integrated by parts and used \eqref{diveg} to remove one of the terms. Letting $M\to \infty$, the last term tends to $0$ by finiteness of the right-hand side of \eqref{comparloc} and decay properties of $ \hpNe$ and its gradient, and we obtain the result.
\end{proof}

\section{Proof of the main results}\label{sec-main}
\subsection{Statement of two intermediate results}
Our main theorem is a consequence of two intermediate results. 
 
\subsubsection{Empirical fields without interaction.} \label{sec:empwihtoutint}
The first one is a large deviation principle for the tagged empirical field, when the points are distributed according to a reference measure on $(\Rd)^N$ where there is no interaction.

For any $N \geq 1$ we define $\QNbeta$ as
\begin{equation} \label{def:QNbeta}
d\QNbeta(\XN) := \prod_{i=1}^N \frac{\exp\left( - N^{1-\frac{\s}{\d}} \beta \zeta(x_i) \right) dx_i}{\int_{\Rd} \exp\left( - N^{1-\frac{\s}{\d}} \beta \zeta(x) \right) dx} , 
\end{equation}
and we let $\bQpN$ be the push-forward of $\QNbeta$ by the map $\bEmp_N$, as defined in \eqref{def:bEmp}.
Let us point out that in view of \eqref{defzeta} and since $\meseq $ is a compactly supported probability measure, $\zeta$ is asymptotic to $\g + \frac{V}{2}-\cV$ at infinity, so by assumption \ref{H6} the integral in \eqref{def:QNbeta} converges.

We also introduce the following constant:
\begin{equation}
\label{defcomeg} \comeg := \log |\omega|-  |\Sigma|+1,
\end{equation}
where $\omega$ as in \eqref{defomega}.

We may now state the LDP associated to $\bQpN$ (the notation is as in Section \ref{sec-ppp}).
\begin{prop} \label{SanovbQN} 
For any $A \subset \probas_s(\Sigma \times \config)$, we have
\begin{multline} \label{EqSB}
- \inf_{\bPst\in\mathring{A} \cap \probas_{s,1}} \bERS[\bP|\Poisson^1] - \comeg \leq \liminf_{N \ti} \frac{1}{N} \log \bQpN (A) \\ \leq  \limsup_{N \ti} \frac{1}{N} \log \bQpN(A) \leq - \inf_{\bPst \in \bar{A}} \bERS[\bP|\Poisson^1] -\comeg.
\end{multline}
\end{prop}
\begin{proof}
The proof is given in Section \ref{sec:preuveSanovbQNoui}.
\end{proof}

 LDP's for empirical fields can be found in \cite{varadhansf}, \cite{follmersf}, the relative specific entropy is formalized in \cite{FollmerOrey} (for the non-interacting discrete case), \cite{Georgii1} (for the interacting discrete case) and \cite{georgiz} (for the interacting continuous case). In light of these results, Proposition \ref{SanovbQN} is not surprising, but there are some technical differences. In our case, the reference measure $\QN$ is not the restriction of a Poisson point process to a hypercube but only approximates a Bernoulli point process on some domain - which is not a hypercube - with the possibility of some points falling outside. Moreover we want to study large deviations for tagged point processes (let us emphasize that our \textit{tags} are not the same as the \textit{marks} in \cite{georgiz}) which requires an additional argument. These adaptations, leading to the proof of Proposition \ref{SanovbQN}, occupy Section \ref{preuvesanov}.

\subsubsection{Quasi-continuity.}
The second technical result is the \textit{quasi-continuity}\footnote{We use the terminology of \cite{bodguionnet}, which was a source of inspiration.} of the interaction, in the following sense.

For any integer $N$, and $\delta > 0$ and any $\bP \in \probas_s(\Sigma \times \config)$, let us define
\begin{equation} \label{def:TNdelta}
T_N(\bP, \delta) := \{ \XN \in (\Rd)^N, \WN(\XN, \meseq) \leq \bttW(\bP, \meseq) + \delta \}.
\end{equation}

\begin{prop} \label{quasicontinuite}
Let $\bPst\in\probas_{s, 1}(\Sigma \times \config)$. For  all $\delta_1, \delta_2>0$  we have 
\begin{equation}
\label{quasicontinu} 
\liminf_{N \ti} \frac{1}{N} \log \QNbeta \left( \{\bEmp_N \in B(\bPst, \delta_1)\} \cap T_N(\bP, \delta_2) \right) \ge  - \bERS[\bP|\Poisson^1]  -\comeg.
\end{equation}
\end{prop}
Let us compare with Proposition \ref{SanovbQN}. The first inequality of \eqref{EqSB} implies (taking $A = B(\bPst, \delta_1)$ and using the definition of $\bQpN$ as the push-forward of $\QNbeta$ by $\bEmp_N$) that
$$
\liminf_{N \ti} \frac{1}{N} \log \QNbeta \left( \{\bEmp_N \in B(\bPst, \delta_1)\} \right) \ge  - \bERS[\bP|\Poisson^1]  -\comeg.
$$ 
To obtain Proposition \ref{quasicontinuite}, which is the hard part of the proof, we thus need to show  that the event  $T_N(\bP, \delta_2)$ has  enough volume in phase-space near $\bPst$. This relies on taking arbitrary configurations in the ball $B(\bPst, \delta_1)$ and arguing that a large enough fraction of them (in the sense of volume at logarithmic scale) can be modified in order to have well-controlled energy.

The proof of Proposition \ref{quasicontinuite} occupies Section \ref{sec:construction}.

\subsection{The large deviation principle: proof of Theorem \ref{TheoLDP}}
With Propositions \ref{SanovbQN} and  \ref{quasicontinuite} at hand, the proof of Theorem \ref{TheoLDP} is standard. 
\begin{proof}
\textit{Step 1.} \textbf{Weak large deviation principle.} \\
Let $\bPst$ be in $\probas_{s}(\Sigma \times \config)$. Using the splitting formulas \eqref{splitlog}, \eqref{splits},  and the definition of $\bPgot_{N,\beta}$ we have, for any $\delta_1 > 0$ 
\begin{multline*}
\bPgot_{N,\beta} \left( B(\bPst, \delta_1) \right) \\ = \frac{1}{\KNbeta}  \int_{(\Rd)^N \cap \{\bEmp_N(\XN) \in B(\bP, \delta_1)\}} \exp\left(- \frac{\beta}{2} N \WN(\XN, \meseq)\right) \\ \exp\left(-\beta N^{1-\frac{\s}{\d}} \sum_{i=1}^N \zeta(x_i) \right) \prod_{i=1}^N dx_i,
\end{multline*} where $\KNbeta$ is the  new partition function defined by 
\begin{equation} \label{defKNBETA}
K_{N, \beta}= \begin{cases} Z_{N,\beta} e^{ \frac{\beta}{2} \( N^2 \I_V(\meseq) - \frac{N\log N}{\d}\) }&\quad \text{in the cases \LogU, \,\LogD}\\ Z_{N,\beta} e^{\frac{\beta}{2} N^{2 -\frac{\s}{\d}} \I_V(\meseq)} &\quad \text{in the case \Riesz}.\end{cases}\end{equation}
In view of our definition of $\QNbeta$ as in \eqref{def:QNbeta} we may write
\begin{equation*}
\exp\left(-\beta N^{1-\frac{\s}{\d}} \sum_{i=1}^N \zeta(x_i) \right) \prod_{i=1}^N dx_i = \left(\int_{\Rd} \exp\left( - N^{1-\frac{\s}{\d}} \beta \zeta(x) \right) dx\right)^N d\QNbeta(\XN).
\end{equation*}
Also, we have of course for any $\delta_2 > 0$
$$
(\Rd)^N \cap \{\bEmp_N(\XN) \in B(\bP, \delta_1)\} \supset (\Rd)^N \cap \{\bEmp_N(\XN) \in B(\bP, \delta_1)\} \cap T_N(\bP, \delta_2),
$$
where $T_N(\bP, \delta_2)$ is as in \eqref{def:TNdelta}. Using the definition of $T_N(\bP, \delta_2)$, we get
\begin{multline*}
\bPgot_{N,\beta} \left( B(\bPst, \delta_1) \right)  \\
\geq \frac{1}{K_{N,\beta}}  \left(\int_{\Rd} \exp\left( - N^{1-\frac{\s}{\d}} \beta \zeta(x) \right) dx\right)^N  \exp\left(-\frac{\beta}{2} (\bttW(\bPst, \meseq) + \delta_2)\right) \\ \times
  \QNbeta \left( \{\bEmp_N \in B(\bPst, \delta_1)\} \cap T_N(\bP, \delta_2) \right).
\end{multline*}
Using Proposition \ref{quasicontinuite}, we get
\begin{multline} \label{ps1ounon}
\frac{1}{N} \log \bPgot_{N,\beta} \left( B(\bPst, \delta_1) \right) + \frac{1}{N} \log K_{N,\beta} \\ \geq    \log \left(\int_{\Rd} \exp\left( - N^{1-\frac{\s}{\d}} \beta \zeta(x) \right) dx\right) - \bERS[\bP|\Poisson^1]  -  \comeg  - \frac{\beta}{2} \left(\bttW(\bPst, \meseq) + \delta_2\right) + o_N(1).
\end{multline}
On the other hand, by the monotone convergence theorem, since $\zeta = 0$ on $\omega$ and is $> 0$ outside $\omega$, we have
\begin{equation} \label{asymptotiquezetaNbeta}
\log \left(\int_{\Rd} \exp\left( - N^{1-\frac{\s}{\d}} \beta \zeta(x) \right) dx\right) = \log |\omega| + o_N(1).
\end{equation}
Thus, sending $N \to \infty, \delta_1 \to 0$ and $\delta_2 \to 0$ in \eqref{ps1ounon} and using \eqref{defcomeg} we obtain
\begin{equation} \label{weakLDP1}
\liminf_{\delta \to 0} \liminf_{N \ti} \left(\frac{1}{N} \log \bPgot_{N,\beta} \left( B(\bPst, \delta) \right) + \frac{1}{N} \log K_{N,\beta}\right) \geq - \fbarbeta(\bP) + |\Sigma| - 1.
\end{equation}
Conversely, Proposition \ref{prop:LowerBoundenergies} together with Proposition \ref{SanovbQN} and \eqref{asymptotiquezetaNbeta} imply that
\begin{equation} \label{weakLDP2}
\limsup_{\delta \t0} \lim_{N \ti} \frac{1}{N} \log \bPgot_{N,\beta} \left( B(\bPst, \delta) \right)  + \frac{1}{N} \log K_{N,\beta}  \leq - \fbarbeta(\bP) + |\Sigma| - 1.
\end{equation}

\textit{Step 2.} \textbf{Strong large deviation principle.}
Exponential tightness of $\bPgot_{N, \beta}$ is an easy consequence of the fact that the total number of points in $N^{1/{\d}}\Sigma$ is bounded by $N$. It allows one to pass from a weak LDP as in \eqref{weakLDP1}, \eqref{weakLDP2} to a strong large deviation inequality: for any $A \subset \probas(\Sigma \times \config)$, we get 
\begin{multline} \label{preLDP} |\Sigma| - 1 - \inf_{\bPst \in \mathring{A}} \fbarbeta(\bP) \\ \leq \liminf_{N \ti} \frac{1}{N} \left( \log \bPgot_{N,\beta}(A) + \log K_{N,\beta}\right)
\leq \limsup_{N \ti} \frac{1}{N} \left( \log \bPgot_{N,\beta}(A) + \log K_{N,\beta}\right) \\ \leq |\Sigma| - 1 - \inf_{\bPst \in \bar{A} } - \fbarbeta(\bP) .
\end{multline}

\textit{Step 3.} \textbf{Conclusion.}
Applying this relation to the whole space, we deduce the thermodynamic limit, namely
\begin{equation} \label{limitethermo}
\lim_{N \ti} \frac{1}{N} \log K_{N,\beta} = - \inf_{\probas_{s,1}(\Sigma \times \config)} \fbarbeta + |\Sigma| - 1.
\end{equation}
Inserting \eqref{limitethermo} into \eqref{preLDP} yields Theorem \ref{TheoLDP}, up to the fact that $\fbarbeta$ is indeed a \textit{good} rate function, which follows from Lemmas \ref{lem:ERS} and \ref{lem:LSCW}.
\end{proof}

\begin{remark}
One can observe that the result does not depend on the exact relation between $\meseq$ and the $\zeta$ associated to $V$, but rather holds for all sufficiently regular probability densities $\meseq$ and nonnegative functions $\zeta$ growing fast enough at infinity, as soon as the zero set of $\zeta$ contains the support of $\meseq$. In particular $K_{N,\beta}$ can be seen as a function of the couple $(\meseq, \zeta)$ rather than of $V$, and the expansion proven below still holds. \end{remark} 
 
\subsection{Expansion of the partition function: proof of Corollary \ref{corothermo}}
\begin{proof}
The first part of the corollary (the expansions \eqref{expansionlog}, \eqref{expansionriesz}) follows immediately from \eqref{limitethermo} and \eqref{defKNBETA}. For \eqref{logz} we use the following scaling results:
\begin{lem} \label{effetscalingent}
For any $m > 0$ and $\Pst$ in $\probas_s(\config)$ of intensity $m$ we define $\sigma_m P$ as the push-forward of $P$ by the map
$$
\C \mapsto m^{1/\d} \C, 
$$
so that $\sigma_m$ maps bijectively processes of intensity $m$ onto processes of intensity $1$.

We have
\begin{align}
\label{scalingenergie} &\ttW(\Pst, m)=  m \left( \ttW(\sigma_m P, 1) - \frac{1}{\d} \log m \right), \text{ in the cases  \LogU,\, \LogD} \\
\label{scalingentropie} & \ERS[\Pst|\Poisson^1] = m\, \ERS[(\sigma_m \Pst)|\Poisson^1] +1 - m + m\log m.
\end{align}
\end{lem}
\begin{proof}
The scaling relation \eqref{scalingenergie} was proven in \cite{sandier20151d}, \cite{sandier20152d} at the level of electric fields, and the extension to point processes is immediate. We now prove \eqref{scalingentropie}.

First, by a simple change of variables we observe that
\begin{equation} \label{sent1}
\ERS[\sigma_m P | \Poisson^1] = \frac{1}{m}\, \ERS[P | \Poisson^m].
\end{equation}Next, we may write
$$
\ERS[P | \Poisson^m] = \ERS[P | \Poisson^1] + \lim_{R \ti} \frac{1}{R^{\d}} \Esp_{P_{\carr_R}} \left[ \log \frac{d\Poisson^1_{\carr_R}}{d\Poisson^m_{\carr_R}} \right].
$$
The Radon-Nikodym derivative $\frac{d\Poisson^1_{\carr_R}}{d\Poisson^m_{\carr_R}}$ only depends  on the number of points in $\carr_R$ and we have (since $P$ has intensity $m$)
$$
\Esp_{P_{\carr_R}} \left[ \log \frac{d\Poisson^1_{\carr_R}}{d\Poisson^m_{\carr_R}} \right] = \Esp_{P_{\carr_R}} \left[ (m-1) R^{\d} - \Numb_R \log m \right] = (m-1) R^{\d} - mR^{\d} \log m.
$$
Finally, we get
\begin{equation} \label{sent2}
\ERS[P | \Poisson^m] = \ERS[P | \Poisson^1] - m \log m + (m-1), 
\end{equation}
and \eqref{scalingentropie} follows from combining \eqref{sent1} and \eqref{sent2}.
\end{proof}

Now, in the logarithmic cases,  if $\bPst\in\probas_s(\Sigma \times \config)$ is such that $\bttW(\bPst, \meseq)$ is finite, we can apply $\sigma_{\meseq(x)}$ to $\bPst^x$ (for a.e. $x$ in $\Sigma$) and we get from Lemma \ref{effetscalingent}
\begin{multline*}
\fbarbeta(\bPst) := \int_{\Sigma} \meseq(x) \left(\frac{\beta}{2} \ttW\left(\sigma_{\meseq(x)} \bPst^x, 1\right) + \ERS[\sigma_{\meseq(x)} \bPst^x | \Poisson^1] \right) dx \\ + \int_{\Sigma} \left(1 - \frac{\beta}{2\d}\right) \meseq(x) \log \meseq(x)
+ |\Sigma| - 1.
\end{multline*}
In particular, we observe that
\begin{equation} \label{sfreener}
\inf_{\bPst \in \probas_s(\Sigma \times \config)} \fbarbeta(\bPst)  - (|\Sigma| - 1) = \inf_{P \in \probas_{s,1}(\config)} \fbeta(P) + \left(1 - \frac{\beta}{2\d}\right)  \int_{\Sigma} \meseq(x) \log \meseq(x),
\end{equation}
and we obtain \eqref{logz} by combining the expansion \eqref{expansionlog} with the scaling relation \eqref{sfreener}.
\end{proof}

 \subsection{Application to the sine-beta process: proof of Corollary \ref{bsinebmin}}
\label{proofSinGin}
Our large deviation principle deals with the \textit{tagged empirical fields}, defined by averaging the point configurations over translations in $\Sigma$. It is natural to ask about the behavior of the Gibbsian point process itself, that is the push-forward of $\PNbeta$ by the map 
\begin{equation} \label{noaveraging}
\XN \mapsto \sum_{i=1}^N \delta_{x'_i}.
\end{equation}

In the general case, the mere existence of limit points for the law of this quantity is unknown. In the \LogU \ case, however, it was proven in \cite{vv} (see also \cite{MR2484278}) that the limit in law exists and is given by the so-called \textit{sine-$\beta$ process} (named by analogy with the previously known \textit{sine-kernel}, or \textit{Dyson sine process}, which corresponds to $\beta = 2$).
\def\semicirc{\mu^{\mathrm{Wig}}}

For $x \in (-2,2)$ and any $\beta > 0$, we denote by $\sineb(x)$ the Sine-$\beta$ process of \cite{vv} rescaled to have intensity  $\semicirc(x) = \frac{1}{2\pi}\sqrt{4-x^2}$. For any $\beta > 0$, we define $\PgNbeta$ as the push-forward of $\PNbeta$ (\LogU \ case, $V(x) = x^2$) by the map \eqref{noaveraging}. The following is a rephrasing\footnote{The convergence $\then$ is proven in \cite{vv} “in law with respect to vague topology for the counting measure of the point process”. As explained in Remark \ref{memetopologie}, this topology coincides with the one used here.} of \cite[Theorem1]{vv}:
\begin{equation}\label{cvgVV}
\text{For any $x \in (-2,2)$, for any $\beta > 0$, we have } \theta_{Nx} \cdot \PgNbeta \then \sineb(x).
\end{equation}

We may now prove the variational property of $\sineb(x)$ as stated in Corollary \ref{bsinebmin}.
\begin{proof}
Let $F : [-2,2] \times \config \rightarrow \C$ be a bounded continuous function. By definition we have
\begin{multline*}
\Esp_{\bPgot_{N,\beta}}\left[\int F(x,\mathcal{C})\, dP(x,\mc{C})\right] = \Esp_{\PgNbeta} \left[ \int_{[-2,2]} F(x, \theta_{Nx} \cdot \mathcal{C}) dx \right] \\ = \int_{[-2,2]} \Esp_{\PgNbeta} \left[  F(x, \theta_{Nx} \cdot \mathcal{C}) \right]dx
\end{multline*}
From \eqref{cvgVV} we know that the sequence of functions $\{x \mapsto \Esp_{\PgNbeta} \left[  F(x, \theta_{Nx} \cdot \mathcal{C}) \right]\}_N$ converges almost-everywhere on $[-2,2]$ to $x \mapsto \Esp_{\sineb(x)} [ F(x, \mathcal{C})]$ as $N \ti$. Since $F$ is bounded the dominated convergence theorem implies that
\begin{equation} 
\lim_{N \ti} \Esp_{\bPgot_{N,\beta}}\left[\int F(x,\mathcal{C})\, dP(x,\mc{C})\right] = \int_{[-2,2]}\Esp_{\sineb(x)} [F(x, \mathcal{C})] dx = \Esp_{\bsineb}[F(x, \mathcal{C})],
\end{equation}
where $\bsineb$ is the associated tagged point process. Since this is true for any bounded continuous function on $[-2,2] \times \config$, the sequence of tagged point processes $\{\bPgot_{N,\beta}\}_N$ converges to $\bsineb$, and then the large deviation principle implies that $\bsineb$ is be a minimizer of $\bar{\mathcal{F}}^{\semicirc}_{\beta}$.

The fact that the point process $\sineb$ itself minimizes $\fbeta$ (as defined in the statement) among stationary point processes of intensity $1$ follows by scaling.
\end{proof}

\section{Screening and regularization}\label{sec:screening}
In this section, we introduce the main ingredients of the proof of Proposition \ref{quasicontinuite}. We define two operations on point configurations (say, in a given hypercube $\carr_R$) which can be  roughly described this way: 
\begin{enumerate}
\item The screening procedure $\Phiscr$ takes “good” (called here \textit{screenable}) configurations and replaces them by “better” configurations which are neutral (the number of points matches the integral of the equilibrium measure times $N$) and for which there is a compatible electric field supported in $\carr_R$ whose energy is well-controlled. 
\item The regularization procedure $\Phireg$ takes a configuration and separates pairs of points which are “too close” from each other.
\end{enumerate}
If a “bad” (non-screenable) configuration is encountered, it replaces it by a “standard” configuration (at the cost of a loss of information).

\subsection{The screening result}
When we get to the Section \ref{sec:construction} (which contains the proof of Proposition \ref{quasicontinuite}), we will want to construct point configurations by elementary blocks (hyperrectangles) and compute their energy additively in these blocks. One of the technical tricks borrowed from the original works above is that this may be done by gluing together electric fields whose normal components agree on the boundaries. More precisely, assume that space is partitioned into a family of hyperrectangles $K$. We would like to  construct a vector field $\EK$ in each $K$ such that 
\begin{equation}
\label{eqcellfond}
\left\lbrace \begin{array}{ll}
 -\div (\yg \EK) 
= \c \left( \mc{C}_K - \meseq'\drd\right) & \text{in} \ K\times \R^\k \\
 \EK \cdot \vec{\nu} = 0  & \text{on} \  \partial  (K\times \R^\k) \end{array}\right.
\end{equation} (where $\vec{\nu}$ is the outer unit normal to $K$)
for some discrete set of points $\mc{C}_K \subset K $, and with 
\begin{equation*}
\int_{K\times \R^\k} \yg |\EK_\eta|^2 
\end{equation*}
 well-controlled.
Integrating the relation \eqref{eqcellfond}, we see that for this equation to be solvable, we need the following compatibility condition 
\begin{equation}\label{compa}
\int_K d\mc{C} = \int_{K} d\meseq',
\end{equation}
in particular the partition must be made so that $\int_K d \meseq'$ are integers.

When solving \eqref{eqcellfond}, we could take $\EK$ to be a gradient, but we do not require it.
Once the relations \eqref{eqcellfond} are satisfied  on each $K$, we may paste together the vector fields $\EK$ into a unique vector field $\Etot$, and the discrete sets of points $\mc{C}_K$ into a configuration $\Ctot$. By \eqref{compa} the cardinality of $\Ctot$ will be equal to $\int_{\R^{\d}} d\meseq'$, which is exactly $N$.

We will thus have obtained a configuration of $N$ points, whose energy we may try to evaluate. The important fact is that the enforcement of the boundary condition $\EK\cdot\vec{ \nu}=0$ on each boundary ensures that 
\begin{equation}\label{deve}
-\div (\yg \Etot)= \c \left(\Ctot - \meseq' \drd\right) \quad \text{in } \R^{\d+\k}
\end{equation}
holds globally. Indeed, a vector field which is discontinuous across an interface has a distributional divergence concentrated on the interface equal to the jump of the normal derivative, but here by construction the normal components coincide hence there is no divergence created across these interfaces. 

Even if the $\EK$'s were gradients, the global $\Etot$ is in general no longer a gradient.  This does not matter however, since the energy of the \textit{local electric field} $\nab \hpN$ generated by the finite configuration $\Ctot$ (and the background $\muv'$) is always smaller, as stated in Lemma \ref{minilocale}.  We thus have 
\begin{equation*}
\int_{\R^{\d+\k}} \yg |\nab \hpNe|^2 \le \sum_{K} \int_{K \times \R^{\k}} \yg |\EK_\eta|^2,
\end{equation*}
and the energy, which originally was a sum of pairwise interactions,  has indeed become additive over the cells. 

Starting from a given point configuration in a cell $K$, assume that there exists $E$ compatible with the configuration and the background in $K$ (i.e. the first equation of \eqref{eqcellfond} is satisfied but not necessarily the second one) whose energy is not too large. Using the \textit{screening} tool developed in \cite{sandier20152d, sandier20151d, rougerie2016higher, petrache2014next}, we may modify the configuration in a thin layer near $\partial K$, and modify the  vector field $E$ a little bit as well, so that \eqref{eqcellfond} is satisfied, and so that the energy has not been increased too much. This is the object of Section \ref{sec:descriscri}. 

If, on the other hand, there exists no such $E$ of reasonable energy in the cell $K$, we will completely discard the configuration and replace it by an artificial one for which there exists a screened electric field whose energy is well controlled. This will be described in Section \ref{sec:artificial}.

\subsubsection{Compatible and screened electric fields.} \label{sec:notationscreen}
Let us introduce some notation. Let $K$ be a compact subset of $\R^{\d}$ with piecewise $C^1$ boundary (typically, $K$ will be an hyperrectangle or the support $\Sigma$), $\C$ be a finite point configuration in $K$, $\mu$ be a positive measure in $L^{\infty}(K)$, and $E$ be a vector field in $\Lploc(K \times \R^\k, \Rd \times \R^\k)$. 
\begin{itemize}
\item We say that \textit{$E$ is compatible with $(\C, m)$ in $K$} and we write
\begin{equation*}
E \in \Comp(\C, \mu, K)
\end{equation*}
 provided 
\begin{equation}
\label{def:compatible}
-\div (\yg E)= \c \left(\mc{C} - \mu \drd\right) \quad \text{in } K \times \R^\k.
\end{equation}
\item We say that \textit{$E$ is compatible with $(\C, \mu)$ and screened in $K$} and we write
\begin{equation*}
E \in \Screen(\C, \mu, K)
\end{equation*}
when \eqref{def:compatible} holds and moreover
\begin{equation} \label{lescreening}
 E \cdot \vec{\nu} = 0   \quad \text{on } \partial  (K\times \R^\k), 
\end{equation}
where $\vec{\nu}$ is the outer unit normal vector. 
\end{itemize}
If $E$ is in $\Comp(\C, \mu, K)$, for any $\eta \in (0,1)$ we define the truncation of $E$ as
\begin{equation}
\label{def:Eeta}
E_{\eta}(X) := E - \sum_{p \in \C \cap K} \nabla \f_{\eta} (X - (p,0)).
\end{equation}

\subsubsection{Screening.} \label{sec:descriscri}
We now rephrase the screening result from \cite{petrache2014next}, with two modifications: we have to deal with a non-constant background measure, and we also need a volume estimate. 
Roughly speaking, with the notation as in Section \ref{sec:notationscreen}, we start from $(\C, \mu)$ in some hyperrectangle $K$, and $E \in \Comp(\C, \mu, K)$, and we slightly modify $\C, E$ into $\Cscr, \Escr$ such that in the end $\Escr$ belongs to  $\Screen(\Cscr, \mu, K)$. We call  this \textit{screening}  because when \eqref{lescreening} holds the configuration has no influence outside the cell (as far as upper bound estimates on the energy are concerned, see below). The configuration\footnote{To be accurate, we do not really need the original configuration  to be defined in the whole $K$, but only in a subcube $\carr_R\subset K$, the configuration is then completed by hand.} and the field are modified in a thin layer near the boundary, and remain unchanged in an interior set denoted $\mathrm{Old}$.  

A new feature, compared to the previous screening results, is that we need the positions of the new points (those added “by hand” in the layer near the boundary), denoted by $\mathrm{New}$, to be flexible enough to create a positive volume of associated point configurations in phase space. We will let the points move in small balls, which creates some volume without altering the energy estimates.

\begin{prop}[Screening] \label{Lemscreening}
Let $\um, \om > 0$ be fixed. 

There exists $R_0, \eta_0 > 0$ depending only on $\d$ and $\om$, and a constant $C$ depending on $\d, \s, \um, \om$ such that the following holds. 

Let $R > 0, M > 1$, and $\epsilon \in (0,1)$ be such that the following inequalities are satisfied 
\begin{equation}\label{condR}
 R> \max \left( \frac{R_0}{\epsilon^{2}},  \frac{C R_0 M}{\epsilon^3}\right),  \\  
R>   \begin{cases}
C R_0 M^{1/2} \epsilon^{-\d-3/2} & \text{if } \k =0 \\ 
 \max\left(C R_0M^{1/(1-\gamma)} \epsilon^{\frac{-1-2\d+\gamma}{1-\gamma}} , R_0\epsilon^{\frac{2\gamma}{1-\gamma} }   \right)  & \text{if} \ \k=1
\end{cases}.
\end{equation} 

Let $\C$ be a point configuration in $\carr_R$, let $K$ be a hyperrectangle such that $\carr_R \subset K \subset \carr_{R+\epsilon R}$, and let $\mu \in C^{0, \kappa}(K)$ satisfying
\begin{equation*}
\um \leq \mu \leq \om \text{ on } K, \quad \int_{K} \mu \text{ is an integer}.
\end{equation*}

Let $\eta$ be in $(0, \eta_0)$, let $E$ be in $\Comp(\C, \mu, \carr_R)$, and let $E_{\eta}$ be as in \eqref{def:Eeta}. Assume that 
\begin{equation} \label{definiM}
\frac{1}{R^\d}\int_{\carr_R \times [-R, R]^\k}\yg |E_{\eta}|^2  \leq M
\end{equation}
and in the case $\k=1$ assume that
\begin{equation} \label{decrvert}
\frac{1}{\epsilon^4 R^\d} \int_{\carr_R \times \left( \R \backslash (-\hal \epsilon^2 R, \hal \epsilon^2 R)\right)} \yg |E|^2 \leq 1.
\end{equation}

Then there exists a (measurable) family $\Phiscr_{\epsilon, \eta, R}(\mc{C}, \mu)$ of point configurations in $K$ and a partition of $K$ as $\Old \sqcup \New$ with
\begin{equation} \label{IntinOld}
\Int_{\epsilon} := \{ x \in \carr_R, \dist(x, \partial  \carr_R)\} \geq 2 \epsilon R\} \subset \Old
\end{equation} 
such that for any $\Cscr$ in $\Phiscr_{\epsilon, \eta, R}(\mc{C}, \mu)$ we have 
\begin{align}
\label{coincideonold} & \text{The configurations $\mc{C}$ and $\Cscr$ coincide on $\Old$}, \\
\label{loindubord} & \min_{x \in \Cscr} \dist(x, \partial  K) \geq \eta_0, \quad \text{ and } \min_{x \in \Cscr \cap \New,\ y \in \Cscr} |x-y| \geq \eta_0,\\
\label{touchepasproche} & \sum_{x_i \neq x_j \in \Cscr,  |x_i - x_j| \leq 2  \eta} \g(x_i-x_j)  = \sum_{x_i \neq x_j \in \mc{C}, |x_i - x_j| \leq 2 \eta} \g(x_i-x_j).
\end{align}
For any $\Cscr$, there exists\footnote{In particular the configuration $\Cscr$ has exactly $\int_{K} \mu$ points in $K$.} a vector field $\Escr \in \Screen(\Cscr, \mu, K)$ such that
\begin{equation} \label{erreurEcrantage} 
\int_{K \times \R^\k} \yg |\Escr_{\eta}|^2 \leq  I+II+III
\end{equation}
with, for some constant $C$ depending only on $\s,\d,\um,\om$
\begin{align*}
I&=\left( \int_{\carr_R \times [-R, R]^\k} \yg |E_{\eta}|^2   \right)  + C \g(\eta) M  \ep R^\d
\\ II &=  CR^{\d+3-\gamma} \|\mu\|_{C^{0, \kappa}(K)}^2 \\
III &= \sqrt{ I \cdot II}.
\end{align*}
\end{prop}

\begin{proof} 
The statement is based on  a re-writing of \cite[Proposition 6.1.]{petrache2014next} provided by an examination of its proof.

First, let us assume that $\mu \equiv 1$, in that case we may apply directly \cite[Proposition 6.1]{petrache2014next} and we will sketch the proof here.
 
The first step uses a mean-value argument and the assumption \eqref{definiM}  in order to find a good boundary, that is the boundary of an hypercube $\Old$ included in $\carr_R$ and containing $\Int_{\epsilon}$, such that $\int_{\partial \Old \times \R^\k} \yg |E_\eta|^2$ is not too large. In the case where $\k=1$, we need to do the same “vertically”, using assumption \eqref{decrvert}, in order to find a good height $z$ such that $\int_{\Old \times \{-z,z\}} \yg |E_\eta|^2$ is controlled. 

The configuration $\mc{C}$ and the field $E$ are kept unchanged inside $\Old$, hence \eqref{coincideonold} is satisfied. Next, we tile $\New := K \backslash \Old$ by small hypercubes of sidelength $O(1)$ (and uniformly bounded below)  and place one point near the center of  each of these hypercubes. By construction, the new points are well-separated: the distances between two points in $\New$ or between a point in $\New$ and a point in $\Old$, or between a point in $\New$ and the boundary of $K$, are all bounded below by $2 \eta_0$ (which ensures \eqref{loindubord}). In particular if $\eta < \eta_0$ no new $2\eta$-close pair has been created and \eqref{touchepasproche} holds. Figure \ref{figureecrantage} is meant to illustrate the construction.

\begin{figure}[h!]
\begin{center}
\includegraphics[scale=0.9]{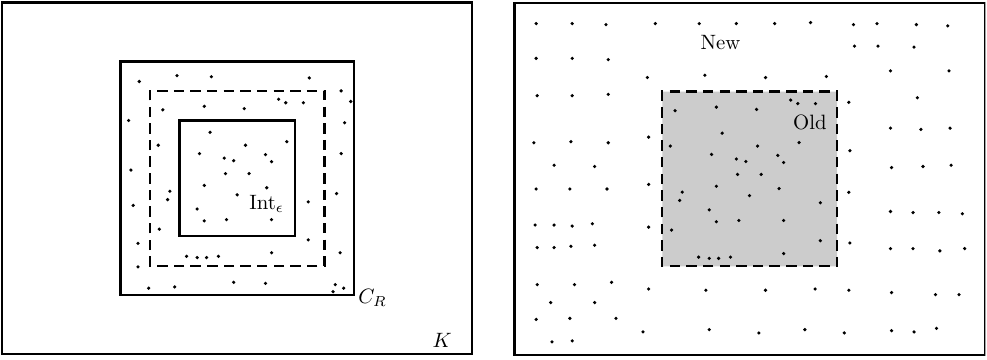}
\caption{The original configuration (on the left) and the screened configuration (on the right). The dashed line corresponds to the good boundary. Proportions are distorted and $\mathrm{Int}_\ep$ really contains most of the set $\carr_R$, which itself contains most of $K$.}
\end{center}
\label{figureecrantage}
\end{figure} 
 
Then, we construct a global electric field on $\New$ by pasting together vector fields defined on each small hypercube of the tiling. In order to ensure the global compatibility, i.e. no extra divergence to be added, we need the  normal components  to agree with that of $E$ on $\partial  \Old$ and to be $0$ on $\partial  K$.  The resulting vector field is thus in $\Screen(\Cscr, \mu, K)$. It is proven that we can construct it such that  (with a constant $C$ depending only on $\d,\s$)
\begin{equation}\label{casm1}
\int_{K\times \R^\k} \yg |\Escr_{\eta}|^2 \leq \left(\int_{\carr_R \times \R^k} \yg |E_{\eta}|^2\right) + C \g(\eta) M \epsilon R^{\d}.
\end{equation}

There remains to handle  the fact that the background measure $\mu$, in our setting, is not constant equal to $1$ but may vary between $\um$ and $\om$. The case where $\mu \equiv m$ is a constant function (with $m \in (\um, \om)$) follows by a simple scaling argument, provided that the constant $C$ in \eqref{condR} is chosen large enough (depending on $\um, \om, \d, \s$). The bounds below on the distances (as e.g. in \eqref{loindubord}) become worse when $\om$ is large, which explains why $\eta_0$ has to be chosen small enough depending on $\om$.  If $\mu$ is not constant, we approximate it by its average and use its H\"older continuity to control the error, as described in Lemma \ref{constvsmoy}. This is what creates the error term II.  

Finally, for each small hypercube in the tiling of $K \backslash \Old$, we may move the point placed therein by a small distance $\frac{\eta_0}{4}$ without affecting the conclusions, as explained e.g. in \cite[Remark 6.7]{petrache2014next}. This creates a family of configurations (with their associated screened vector field).
\end{proof}

Let us now estimate how this procedure changes the volume of a set of configurations in phase-space. Let $R, \epsilon, \mu$ be as in Proposition \ref{Lemscreening}. Let $\Int_\ep$ be as defined in \eqref{IntinOld} and let $\mathrm{Ext}_\ep:=  {\carr_R } \backslash \Int_\ep$.

We define $\v_0$ as the volume of a ball of radius $\frac{\eta_0}{4}$ in $\Rd$ 
\begin{equation}
\label{def:v0} \v_0(\d, \om) := \left|B\left(0,\frac{\eta_0}{4}\right)\right|.
\end{equation}
\begin{lem} 
Let $\Nn, \Nnint$ be two integers, let $N_K := \int_K \mu(x) dx$. Let $\v$ be such that $\v \leq \v_0$ and 
\begin{equation}\label{condnext}
N_K  - \Nnint \le \frac{|\mathrm{Ext}_\ep|}{2 \v}.
\end{equation}
Let $\calA$ be a measurable family of point configurations in $\config(\carr_R)$, and let assume that each element of $\calA$ has $\Nn$ points in $\carr_R$  and $\Nnint$ points in $\Int_{\epsilon}$. Let us also assume that $(\mc{C}, \mu)$ satisfies the conditions of Proposition \ref{Lemscreening} for all $\C$ in $\calA$. Then we have
 \begin{multline}\label{PreservVolume1}
\log \Leb^{\tens N_K} \left(\underset{\C \in \calA}{\bigcup} \Phiscr_{\epsilon, \eta , R}(\C, \mu) \right) \ge \log \Leb^{\tens \Nn}(\calA) \\+\log \left( \left(N_K-\Nnint \right)! \left( \frac{\v}{|\mathrm{Ext}_\ep|}\right)^{N_K-\Nnint}  \right) + (N_K-\Nn) \log |\mathrm{Ext}_\ep|.
\end{multline}
\end{lem}
\begin{proof}
We may partition $\calA$ as $\calA = \cup_{n_{\New}=0}^{N_K}  (\calA|n_{\New})$ according to the number of points $n_{\New}$ that are created in $\New$ (we denote by $\calA|n_{\New}$ the subset of $\calA$ consisting of configurations for which $n_{\New}$ points are created). By construction, the number of points in $\Old$ (points which remain unchanged) is thus given by $N_K-n_{\New}$. Moreover, again by construction, we have $\Int_{\ep} \subset \Old$, hence
\begin{equation} \label{nintKmu}
\Nnint \leq N_{K}-n_{\New}.
\end{equation}

Thus for each configuration in $\calA|n_{\New}$, at least $N_K-n_{\New}$ points are left untouched while the other ones, all belonging to $\Ext_{\ep}$, are deleted and replaced by $n_{\New}$ points, each one being allowed to move in a small ball of radius $\frac{\eta_0}{4}$, which creates a volume $\v_0 \geq \v$. We may thus write
 \begin{equation*}
  \Leb^{\tens N_K}\left(\underset{\C \in (\calA|n_{\New}) }{\bigcup} \Phiscr_{\epsilon, \eta,R}(\C, \mu) \right) \ge \frac{ \Leb^{\tens \Nn}(\calA|n_{\New}) (n_{\New})! \v^{n_\New}  } {|\mathrm{Ext}_\ep|^{\Nn-(N_K -n_{\New})}  }
  \end{equation*}
One may check that the map
 $$ 
 x\mapsto x! \left(\frac{\v}{|\Ext_\ep|}\right)^x
 $$
  is decreasing as long as $x \le \frac{|\mathrm{Ext}_\ep|}{2\v}$. Since \eqref{nintKmu} holds, we deduce that
\begin{multline*}
 \Leb^{\tens N_K} \left(\underset{\mc{C} \in \calA|n_{\New}}{\bigcup} \Phiscr_{\epsilon, \eta,R}(\mc{C}, \mu) \right) \\ \ge \Leb^{\tens \Nn}(\calA|n_{\New}) |\mathrm{Ext}_\ep|^{N_K-\Nn}  \left((N_K-\Nnint )! \left(\frac{\v}{|\mathrm{Ext}_\ep|} \right)\right)^{N_K - \Nnint}.
\end{multline*}
Summing over $n_\New$ and taking the $\log$ yields the result.
\end{proof}


\subsection{Screenability} \label{secScreenable}
We introduce the notion of \textit{screenability}, whose main purpose is to ensure that the conditions \eqref{condR}
 of  Proposition \ref{Lemscreening} hold. We then prove the upper semi-continuity of the screening procedure. From now on, when we write $\lim_{a \to a_0, b\to b_0 }$ we mean $\lim_{a\to a_0} \lim_{b\to b_0}$, and the same inductively in case of more than two consecutive limits. This means in particular that each parameter is chosen depending on those on its left.

\subsubsection{Screenability.} \label{sec:additionalconditions} First, we introduce a class of electric fields which are compatible with a given $(\C, \mu)$ in some hypercube, with additional conditions on the energy.

\begin{defi}[Screenability] \label{defscreenable}
Let $0 < \um, \om < + \infty$ be fixed, let $R_0, \eta_0$ (depending only on $\d, \om$) and $C$ (depending on $\d, \s, \om, \um$) be as in Proposition \ref{Lemscreening}.

Let $R > 0, M > 1$ and $\ep \in (0,1)$ be such that the following inequalities are satisfied
\begin{equation} \label{condiScr} 
 R> \max \left( \frac{R_0}{\epsilon^{2}},  \frac{C R_0 M}{\epsilon^3}\right),    
R>   \begin{cases}
C R_0 M^{1/2}\epsilon^{-\d-3/2} & \text{if } \k =0 \\
\max \left(C R_0 M^{1/(1-\gamma)} \epsilon^{\frac{-1-2\d+\gamma}{1-\gamma}} , R_0\epsilon^{\frac{2\gamma}{1-\gamma} }   \right)  & \text{if } \k=1
\end{cases}.
\end{equation}
Let $\C$ be a point configuration in $\carr_R$, let $\mu \in L^{\infty}(\carr_R)$, and let $\eta \in (0, \eta_0)$ be fixed.

We define $\Op^{M,\epsilon}(\C, \mu)$ as the set of vector fields $E$ in $\Comp(\C, \mu, \carr_R)$ such that the following conditions hold:
\begin{itemize}
\item We have \begin{equation} \label{controle-M} 
\frac{1}{R^{\d}} \int_{\carr_{R} \times [-R, R]^\k}\yg |E_{\eta}|^2 < M.
\end{equation}
\item In the case $\k=1$,
\begin{equation} \label{control-e}
\frac{1}{\epsilon^4 R^{\d}} \int_{\carr_{R} \times \left( \R \backslash (-\hal \epsilon^2 R, \hal \epsilon^2 R)\right)} \yg |E|^2 < 1.
\end{equation}
\item We also require that 
\begin{equation}  \label{rajouteborneNscreen}
\Numb_R(\C) < MR^{\d}.
\end{equation}
Although \eqref{rajouteborneNscreen} is really an assumption on $\C$, it can easily be translated into an assumption on $E$ (for $E \in \Comp(\C, \mu, R)$) and it will be convenient for us to consider it as such.
\end{itemize}

Finally, we say that $(\C, \mu)$ is \textit{screenable} and we write $(\C, \mu) \in \Sp$  if $\Op^{M,\epsilon}(\mc{C}, \mu)$ is not empty.
\end{defi}

\subsubsection{Best screenable energy.}
\begin{defi}
For any $(\mc{C}, \mu)$ as above we define $\H$ to be the \textit{best screenable energy}
\begin{equation}
\label{BSE} \H(\mc{C}, \mu) : = \inf \left\lbrace \frac{1}{R^{\d}} \int_{\carr_{R} \times [-R, R]^\k}\yg |E_{\eta}|^2 ,\  \ E \in \Op^{M,\epsilon}(\mc{C}, \mu) \right\rbrace \wedge M,
\end{equation}
where $\wedge$ denotes the $\min$.
\end{defi}
The $\inf$ in \eqref{BSE} can be $+ \infty$ e.g. if the set is empty\footnote{Although it is not needed here, we may observe that if the set is not empty, then the infimum is attained.}, but by definition we always have the upper bound $\H(\C, \mu) \leq M$.

The following lemma will be used for studying the regularity of $\H$.
\begin{lem} \label{LemCorrections} Let $R >0$ and let $\mc{C}, \mc{C'}$ be two configurations in $\carr_R$ and $\mu, \mu'$ be in $L^{\infty}(\carr_R)$. Let $\tilde{E}$ be the electric field
\begin{equation}
\label{defEtilde}
\tilde{E} := \nab  \g* \( (\mc{C}-\mc{C'}) -( \mu-\mu') \drd\).
\end{equation}
Then for any $\eta > 0$, we have, as $(\mc{C'}, \mu')$ converges to $(\mc{C}, \mu)$ in $\config(\carr_R) \times L^{\infty}(\carr_R)$
 \begin{equation*}
\lim_{(\mc{C'}, \mu') \to (\mc{C}, \mu)} \int_{\carr_R \times \R^\k } \yg |\tilde{E}_{\eta}|^2 = 0.
\end{equation*}
\end{lem}
\begin{proof}
Let $\g_\eta :=\min(\g, \g(\eta))$ as in \eqref{def:truncation}. We have  by definition
\begin{equation*}
\tilde E_\eta = \nab g_\eta* (\mc{C}-\mc{C'}) -\nab  g*( (\mu-\mu')\drd).
\end{equation*}
To prove the lemma it is enough to show that, letting 
\begin{equation*}
H_1 := \g_\eta *  (\mc{C}-\mc{C'}), \quad H_2:= \g* ((\mu-\mu') \drd),
\end{equation*}
both quantities 
\begin{equation*}
\int_{\carr_R \times [-R, R]^\k} \yg |\nab H_1|^2, \quad \int_{\carr_R  \times [-R, R]^\k } \yg |\nab H_2|^2
\end{equation*}
tend to $0$ as $(\mc{C'}, \mu')$ converges to $(\mc{C}, \mu)$. 

The number of points in $\carr_R$ is locally constant on $\config(\carr_R)$, so we may assume that the signed measure $\mc{C}-\mc{C'}$ has total mass $0$ in $\carr_R$. Therefore $H_1$ (resp. $\nabla H_1$) decays like $|X|^{-\s-1}$ (resp. like $|X|^{-\s-2}$) as $|X| \ti$ in $\R^{\d + \k}$ (as noticed e.g. in the proof of Lemma \ref{minilocale}).  We may thus write, using an integration by parts
\begin{equation*}
\int_{\R^{\d+\k}}\yg |\nab H_1|^2 =\iint \g_\eta(x-y) (\mc{C}-\mc{C'}) (x)  (\mc{C}-\mc{C'}) (y)
\end{equation*}
 and  the desired result for $H_1$ follows by continuity of $\g_\eta$. 
 
 Concerning $H_2$, since the kernel $\g$ is always integrable with respect to the Lebesgue measure on $\Rd$ we have 
 \begin{equation}\label{dech2}
 |H_2| \le C \|\mu-\mu'\|_{L^\infty(\carr_R)},
 \end{equation} 
 where the constant $C$ depends on $R$. Moreover, by assumption we have
 \begin{equation*}
 -\div (\yg \nab H_2)= \c (\mu-\mu') \drd.
 \end{equation*}

 Let then $\chi$ be a smooth compactly supported positive function equal to $1$ in $\carr_R\times [-R, R]^\k$,  and such that $|\nab \chi|\le 1$. Integrating by parts, we have 
 \begin{multline*}\int_{\R^{\d+\k}} \chi^2 \yg |\nab H_2|^2 = - \int_{\R^{\d+\k} }\chi^2 \div (\yg \nab H_2) H_2 - 2\int_{\R^{\d+\k}} \chi \nab \chi \cdot \nab H_2 \yg H_2\\
 \le \c\left| \int_{\R^{\d+\k}} \chi^2  H_2(\mu-\mu') \drd \right| + \int_{\R^{\d+\k}} \chi|\nab \chi|\yg |H_2||\nab H_2|.
 \end{multline*}
 Using \eqref{dech2}  and the Cauchy-Schwarz inequality, we may thus write 
  \begin{multline*}
  \int_{\R^{\d+\k}} \chi^2 \yg |\nab H_2|^2\le   C \|\mu-\mu'\|_{L^\infty}\left(\int_{\R^{\d+\k}} \chi^2 \yg |\nab H_2|^2 \right)^\hal \left( \int_{\R^{\d+\k}} |\nab \chi|^2 \yg\right)^{\hal} \\
  +C\|\mu-\mu'\|_{L^\infty}^2,
  \end{multline*}
 therefore, for some constant $C$ depending on $R$, we have
 $$\int_{\carr_R  \times [-R,R]^\k } \yg |\nab H_2|^2 \leq \int_{\R^{\d+\k}} \chi^2 \yg |\nab H_2|^2\le  C(\|\mu-\mu'\|_{L^\infty}^2+ \|\mu-\mu'\|_{L^\infty}^4 ),$$
 which completes the proof.
 \end{proof}

We deduce the following:
\begin{lem} \label{lem:ouverture} The set $\Sp$ is open in $\config(\carr_R) \times L^{\infty}(\carr_R)$. 
\end{lem}
\begin{proof}
If $(\C, \mu) \in \Sp$ we may find $E$ in $\Op^{M,\epsilon}(\mc{C}, \mu)$.  If $(\C', \mu')$ is close enough to $(\C, \mu)$, by adding the field 
$\tilde{E}$ (as in \eqref{defEtilde}) to $E$, we get a vector field in $\Comp(\C', \mu')$ whose energy is bounded by that of $E$ plus an error term going to zero as $(\C',\mu')$ goes to $(\C, \mu)$, in particular the vector field $\tilde{E}$ satisfies the (open) conditions \eqref{controle-M}, \eqref{control-e} for $(\C', \mu')$ close enough to $(\C, \mu)$. 
\end{proof}

We also easily deduce:
\begin{lem} \label{uscH}  The function $\H$ is upper semi-continuous on $\config(\carr_R) \times L^{\infty}(\carr_{R})$.
\end{lem}
\begin{proof}
Let $(\C, \mu)$ be in $\Sp$ (otherwise there is nothing to prove), we may consider $E$ in $\Op^{M,\epsilon}(\mc{C}, \mu)$ such that 
\begin{equation*}
\H(\C, \mu)  = \frac{1}{R^{\d}} \int_{\carr_{R} \times [- R, R]^\k}\yg |E_{\eta}|^2.
\end{equation*}The infimum is indeed achieved as mentioned in the previous footnote.
Using Lemma \ref{LemCorrections} we see that if $(\C', \mu')$ is close enough to $(\C, \mu)$, by adding the field 
$\tilde{E}$ (as in \eqref{defEtilde}) to $E$, we get a vector field in $\Comp(\C', \mu')$ whose energy is bounded by that of $E$ plus an error term going to zero as $(\C',\mu')$ goes to $(\C, \mu)$. It implies 
\begin{equation*}
\limsup_{(\C',\mu') \to (\C, \mu)} \H(\C', \mu') \leq \H(\C, \mu), 
\end{equation*}
 which proves the upper semi-continuity.
\end{proof}

The next lemma shows that tagged point processes $\bPst$ of finite energy have good properties: most configurations under $\bPst$ are \textit{screenable} and the average \textit{best screenable energy} is close to the energy of $\bar{P}$. These controls are then extended to point processes in small balls $B(\bPst, \nu)$ around $\bPst$. 

For a given couple $(x, \mc{C})$  we may sometimes abuse notation and write “$(x, \mc{C}) \in \Sp$” instead of $(\mc{C}, \mu_V(x)) \in \Sp$.
\begin{lem} \label{svtecrantable}
Let $\bPst$ be a tagged point process in $\probas_{s}(\Sigma \times \config)$ such that $\bttW(\bPst, \meseq)$ is finite. We have
\begin{itemize}
\item For $\eta > 0$ small enough and any $\epsilon > 0$, 
\begin{equation} \label{presquetousscreenable}
\liminf_{M \to \infty, R\to \infty, \nu\to 0} \inf_{\bar{Q}  \in B(\bar{P}, \nu)}  \bar{Q} (\Sp) = 1.
\end{equation}
\item For any $\epsilon > 0$, 
\begin{equation} \label{HvsW}
\limsup_{\eta \to 0, M \to \infty, R\to\infty, \nu \to 0} \sup_{\bar{Q} \in B(\bar{P},\nu)}  \Esp_{\bQ} \left[ \H(\mc{C}, \meseq(x) ) \right] - \cds \g(\eta)
\leq \bttW(\bPst, \meseq).
\end{equation}
\end{itemize}
\end{lem}
\begin{proof}

\textbf{Screenability.} \\
From Lemma \ref{lem:concordance}, since $\bttW(\bP, \meseq)$ is finite, we know that there exists a stationary $\bPelec$ which is compatible with $(\bP, \meseq)$ and such that
\begin{equation*}
\btW(\bPelec, \meseq) = \bttW(\bPst, \meseq).
\end{equation*}
Using stationarity we see that for any $R > 0$ 
\begin{equation} \label{WsommesurR}
\btW(\bPelec, \meseq) = \Esp_{\bPelec} \left[\frac{1}{R^{\d}} \int_{\carr_R \times \R^\k} \yg |E_{\eta}|^2\right] -\cds \g(\eta).
\end{equation}
Markov's inequality implies that for any $M, R >0$ and $\eta \in (0,1)$ we have
\begin{equation} \label{MarkovPelec}
\bPelec\left(\frac{1}{R^{\d}} \int_{\carr_R \times \R^\k} \yg |E_{\eta}|^2 \geq M\right) \leq \frac{\bttW(\bar{P}, \meseq)+ \cds \g(\eta)}{M}.
\end{equation}
 In the case $\k = 1$, we have $\bPelec$-a.s., for any $\epsilon > 0$ fixed
\begin{equation*}
\lim_{R \ti} \int_{\carr_1 \times (\R \backslash (-\epsilon^2R, \epsilon^2R))^\k} \yg |E_{\eta}|^2  = 0,
\end{equation*}
which in turn implies (using again the stationarity) that for any $\epsilon > 0$
\begin{equation}\label{TCDdec} 
\lim_{R \ti} \bPelec \left(\frac{1}{R^{\d}} \int_{\carr_R \times (\R \backslash (-\epsilon^2R, \epsilon^2R))^\k} \yg |E_{\eta}|^2 \geq 1 \right) = 0.
\end{equation}
Lemma \ref{LemmeDiscr} implies that $\Esp_{\bPst} \left[\Numb_R^2\right] \leq C R^{2\d}$ with a constant $C$ depending only on $\bPst$, therefore we have
\begin{equation} \label{MarkovNbPoints}
\bPst\left (\Numb_R \geq MR^{\d}\right) \leq \frac{C}{M^2}.
\end{equation}

Combining \eqref{MarkovPelec}, \eqref{TCDdec} and \eqref{MarkovNbPoints} we obtain
\begin{equation} \label{preopen}
\lim_{M \to \infty} \lim_{R\to \infty} \bar{P}(\Sp) = 1.
\end{equation}
Since $\Sp$ is open, we may extend \eqref{preopen} to $\bQ \in B(\bP, \nu)$ as $\nu \to 0$, and we obtain \eqref{presquetousscreenable}.

\textbf{Best screenable energy.} \\
Since $\H$ is bounded by $M$, we have
\begin{equation*}
\Esp_{\bP} [ \H ] \leq \Esp_{\bPelec} \left[ \frac{1}{R^\d} \int_{\carr_R \times [-R, R]^{\k}} \yg |E_{\eta}|^2 \right] + M \left(1 - \bP(\Sp)\right).
\end{equation*}
The monotonicity as in Lemma \ref{prodecr}  yields
\begin{equation*}
\Esp_{\bP} [ \H ] - \cds \g(\eta) \leq  \bttW(\bP, \meseq) + M \left(1 - \bP(\Sp)\right)  + o_{\eta}(1),
\end{equation*}
with a $o_{\eta}$ depending only on $\d, \s, \|\meseq\|_{L^{\infty}}$. 

We claim that 
\begin{equation}
\label{leMnestpasgrave}
\limsup_{M \ti, R \ti}  M \left(1 - \bP(\Sp)\right) = 0.
\end{equation}
We may decompose  $1 - \bP(\Sp)$ as
\begin{multline*}
1 - \bP(\Sp) \leq \bPst\left (\Numb_R \geq MR^{\d}\right) + \bPelec \left(\frac{1}{R^{\d}} \int_{\carr_R \times (\R \backslash (-\epsilon^2R, \epsilon^2R))^\k} \yg |E_{\eta}|^2 \geq 1 \right) \\ + \bPelec\left(\frac{1}{R^{\d}} \int_{\carr_R \times \R^\k} \yg |E_{\eta}|^2 \geq M\right).
\end{multline*}
The first term in the right-hand side is $O\left(\frac{1}{M^2}\right)$ from \eqref{MarkovNbPoints}, and the second one tends to $0$ as $R \ti$ independently of $M$ (see \eqref{TCDdec}. To prove \eqref{leMnestpasgrave} it remains to prove that
$$
\limsup_{M \ti, R \ti} M \bPelec\left(\frac{1}{R^{\d}} \int_{\carr_R \times \R^\k} \yg |E_{\eta}|^2 \geq M\right) = 0.
$$
Using Markov's inequality we may bound the left-hand side by the tail of the expectation 
$$
\Esp_{\bPelec} \left[ \frac{1}{R^{\d}} \int_{\carr_R \times \R^\k} \yg |E_{\eta}|^2 \mathbf{1}_{\left\lbrace\frac{1}{R^{\d}} \int_{\carr_R \times \R^\k} \yg |E_{\eta}|^2 \geq M \right\rbrace} \right], 
$$
which tends to $0$ as $M \ti$ by dominated convergence. Thus \eqref{leMnestpasgrave} holds and, using \eqref{preopen}, we obtain 
\begin{equation*}  
\limsup_{M \to \infty, R \to \infty} \left(\Esp_{\bP} [ \H ] - \cds \g(\eta)\right) \leq \bttW(\bP, \meseq) +  o_{\eta}(1), 
\end{equation*}
and letting $\eta \to 0$ yields 
\begin{equation}
\label{prelim} 
\limsup_{\eta \to 0, M \to \infty, R \to \infty} \left(\Esp_{\bP} [ \H ] - \cds \g(\eta)\right) \\ \leq \bttW(\bP, \meseq). 
\end{equation}
We may again extend \eqref{prelim} to $\bQ \in B(\bP, \nu)$ as $\nu \to 0$ using the upper-semi continuity of $\H$ as stated in Lemma \ref{uscH}, and we obtain \eqref{HvsW}.
\end{proof}

\subsection{Regularization of point configurations}
The singularity of the interaction kernel has been dealt with by truncating the short distance interactions. For any configuration $\XN$,  the truncated energy converges  as $\eta\to 0$ to the next-order “renormalized” energy $\WN(\XN,\meseq)$ (see \eqref{defWN}), but in order to prove our LDP we need a control on the truncation error which is somehow uniform in $\XN$ (at least with high probability).

\subsubsection{The regularization procedure}
The purpose of the following lemma is to regularize a point configuration by spacing out the points that are too close to each other, while ensuring that the new configuration remains close to the original one. This operation generates a family of configurations $\Creg$ (“reg” as “regularized”) for which we control the contribution of the energy due to pairs of close points.

In this section, if $l > 0$ is fixed, for any $\vec{i} \in l \Z^d$, we define “the hypercube of center $\vec{i}$” as the closed hypercube of sidelength $l$ of center $\vec{i}$ (in other words, $\carr_{l}(\vec{i})$), and we identify it with its center. Let us recall that $\Numb_r(\vec{i})$ denotes the number of points in $\carr_{l}(\vec{i})$.

\begin{lem} \label{LemmeRegul} 
Let  $\tau \in (0,1)$, $R > 1$ be fixed, and let $K$ be an hyperrectangle such that 
$$\carr_R \subset K \subset \carr_{2R}.$$ There exists a measurable multivalued function $\Phireg_{\tau,R}$ which associates to any finite configuration $\C$ a subset of $\config$, such that 
\begin{enumerate} 
\item Any configuration $\Creg$ in $\Phireg_{\tau,R}(\mc{C})$ has the same number of points as $\mc{C}$ in $K$.
\item For any fixed $R > 0$, the distance to the original configuration goes to zero as $\tau \t0$, uniformly on $\C \in \config(K)$ and $\Creg \in \Phireg_{\tau,R}(\mc{C})$
\begin{equation} \label{distanceregular}
\lim_{\tau \to 0} \sup_{\C \in \config(K)} \sup_{\Creg \in \Phireg_{\tau,R}(\mc{C})}  \dconfig(\mc{C}, \Creg)  = 0.
\end{equation}
\item For any $\Creg \in \Phireg_{\tau,R}(\mc{C})$, for any $\eta \ge 8 \tau$, we have, for some constant $C$ depending only on $\d, \s$
\begin{multline} \label{controletroncatureregul}
\sum_{p \neq q \in \Creg, |p - q| \leq \eta} \g(p - q) \\ \le
C \g(\tau) \sum_{\vec{i} \in 6\tau \Zd} \left(\Numb_{12\tau}[\vec{i}]^2(\mc{C}) - 1\right)_+  + \sum_{p \neq q \in \mc{C} , \tau \le|p-q|\le 2\eta } \g(p-q).
\end{multline} 
\item For any integer $N_{K}$ and any family $\calA$ of configurations with $N_{K}$ points in $K$, we have, for some constant $C$ depending only on $\d$
\begin{multline} \label{VolumeReg}
\log \Leb^{\otimes N_K}\Big(\bigcup_{\mc{C} \in \calA} \Phireg_{\tau,R}(\mc{C})\Big)\\  \geq  \log \Leb^{\otimes N_K}\left(\calA\right) - C \int_{\mc{C} \in A} \sum_{\vec{i} \in 6 \tau \Zd}\Numb_{6\tau}[\vec{i}](\mc{C}) \log \Numb_{6\tau}[\vec{i}] (\mc{C}) \, d \Leb^{\otimes N_K}(\C).
\end{multline}
\end{enumerate}
\end{lem}

\begin{proof}
\textbf{Definition of the regularization procedure.}\\
For any $\tau > 0$ and $\mc{C} \in \config(K)$ we consider two categories of hypercubes in $6\tau \Z^d$ :
\begin{itemize}
\item $S_{\tau}(\mathcal{C})$ is the set of hypercubes $\vec{i} \in 6\tau \Z^d$ such that $\mathcal{C}$ has at most one point in $\vec{i}$ and no point in the adjacent hypercubes.
\item $T_{\tau}(\mc{C})$ is the set of the hypercubes that are not in $S_{\tau}(\mathcal{C})$ and that contain at least one point of $\mc{C}$.
\end{itemize}

We define $\varphi_{\tau}(\mc{C})$ to be the following configuration: 
\begin{itemize}
\item For any $\vec{i}$ in $S_{\tau}(\C)$, the configuration $\C \cap \vec{i}$ is left unchanged.
\item For any $\vec{i} \in T_{\tau}(\mc{C})$, the configuration $\mc{C} \cap \vec{i}$ is replaced by a well-separated configuration in a smaller hypercube. More precisely we consider the lattice $3 \Numb_{6\tau}[\vec{i}]^{-1/{\d}} \tau \Zd$  translated so that the origin coincides with the point $\vec{i} \in 6 \tau \Zd$ and place $\Numb_{6\tau}[\vec{i}]$ points on this lattice in such a way that they are all contained in the hypercube of sidelength $3 \tau$ and center $\vec{i}$ (a simple  volume argument shows that this is indeed possible, the precise way of arranging the points is not important - it is easy to see that one may do it in a measurable fashion). 
\end{itemize}
It defines a measurable function $\varphi_{\tau,R} : \config(K) \rightarrow \config(K)$ (illustrated in Figure \ref{figregul}). 
\begin{figure}[h!]
\begin{center}
\includegraphics[scale=0.8]{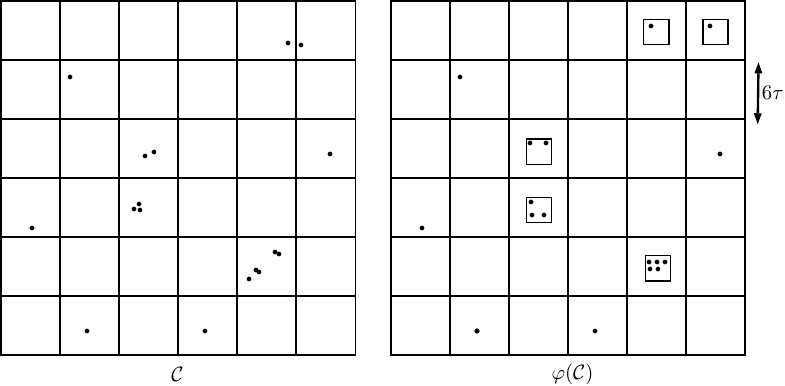}
\end{center}
\vspace{-0.5cm}
\caption{Effect of the regularization. On the right are shown the smaller hypercubes in which the new configurations are created for $\vec{i} \in T_{\tau}(\mathcal{C})$.}
\label{figregul}
\end{figure}

Then, we define $\Phireg_{\tau,R}(\mc{C})$ as the set of configurations that are obtained from $\C$ the following way:  we replace $\C$ by $\varphi_{\tau, R}(\C)$ and for any $\vec{i} \in T_{\tau}(\C)$  we allow the points (of the new configuration) to move arbitrarily and independently within a ball of radius $\Numb_{6\tau}[\vec{i}]^{-1/{\d}}\tau$. We claim that $\Phireg_{\tau,R}$ has the desired properties. 

\textbf{Number of points.} \\ 
By construction, the number of points is kept the same in each hypercube of the tiling, hence the global number of points is conserved.

\textbf{Distance estimate.}\\
By construction, for any $\Creg \in \Phireg_{\tau,R}(\mc{C})$ the configurations $\mc{C}$ and $\Creg$ have the same number of points in every hypercube of $6\tau \Zd$. It implies that every point of $\mc{C}$ is displaced by a distance $O(\tau)$ (depending only on the dimension). By the definition \eqref{dconfig} of the distance, it yields
 \begin{equation*}
\sup_{\C \in \config(K)} \sup_{\Creg \in \Phireg_{\tau,R}(\mc{C})} \dconfig(\mc{C}, \Creg) \leq C \tau,
\end{equation*}
with $C$ depending only on $\d$.

\textbf{Truncation estimate.}\\
Let us distinguish three types of pairs of points $p, q \in \Creg$ which might satisfy $|p-q| \leq \eta$: 
\begin{enumerate}
\item The pairs of points $p, q$ belonging to some hypercube of $T_{\tau}(\mc{C})$.
\item The pairs of points $p, q$ belonging to two adjacent hypercubes of $T_{\tau}(\mc{C})$.
\item The pairs of points $p, q$ such that $|p-q| \leq \eta$ but neither of the two previous cases holds.
\end{enumerate}

To bound the contributions of the first type of pairs, we observe that in any hypercube $\vec{i} \in T_{\tau}(\mc{C})$ the sum of pairwise interactions is bounded above by
\begin{equation} \label{bornereseautau}
\sum_{p \neq s \in \Creg \cap \vec{i}} \g(p-q) \leq C\g(\tau)(\Numb_{6\tau}[\vec{i}]^2(\mc{C})-1)_+  .
\end{equation}
Indeed by construction in a hypercube $\vec{i} \in T_{\tau}(\mc{C})$ the points of $\Creg$ are separated by a distance at least $2\Numb_{6\tau}[\vec{i}]^{-1/{\d}}\tau$, and it is elementary (see e.g. \cite[(2.44), (2.45)]{serfatyZur} to check that their interaction energy is then bounded above by $C \g(\tau)Numb_{6\tau}[\vec{i}]^2$, with a constant depending only on $\d, \s$. In particular \eqref{bornereseautau} holds, because the left-hand side is obviously zero when $\Numb_{6\tau}[\vec{i}]=1$.

Since the distance between the points in $\Creg$ which belong to different hypercubes is bounded below by $6\tau$, 
we easily find that the total contribution of the second type of pairs is bounded by 
$$
C \g(\tau)\sum_{\vec{i} \in 6\tau \Z^d} \left(\Numb_{12\tau}[\vec{i}]^2(\mc{C})-1\right)_{+}.
$$

Finally the contribution of the third type of pairs is easily bounded by 
$$
\sum_{p, s \in \mc{C}, \tau \leq |p-q| \leq \eta+8\tau} \g(p-q),
$$
indeed any such two points were at distance at least $12\tau$ in $\mc{C}$ and their distance is reduced by at most $8\tau$ during the regularization (then one discusses according to the expression of $\g$).

\textbf{Volume loss estimate.}\\
 The pre-images by $\Phireg_{\tau,R}$ have a simple description\footnote{For a given configuration $\mc{C}$ in $K$ with $N_K$ points this defines a submanifold of $(\Rd)^{N_K}$ of co-dimension $\# S_{\tau}(\mc{C})$.}: we have $\Phireg_{\tau,R}(\mc{C}) = \Phireg_{\tau,R}(\mc{C}')$ only if  $\mc{C'} \cap \vec{i} = \mc{C} \cap \vec{i}$ for $ \vec{i} \in S_{\tau}(\mc{C})$ and $\Numb_{6\tau}[\vec{i}](\mc{C}') =\Numb_{6\tau}[\vec{i}](\mc{C})$ for $\vec{i} \in T_{\tau}(\mc{C})$ (these conditions are sufficient once symmetrized with respect to the roles of $\mc{C}$ and $\mc{C}'$). The volume of a pre-image is bounded by
\begin{equation}
\left(\sum_{\vec{i} \in T_{\tau}(\mc{C})} \Numb_{6\tau}[\vec{i}] \right)! \left(\tau^\d\right)^{\sum_{\vec{i} \in T_{\tau}(\mc{C})}\Numb_{6\tau}[\vec{i}]}
\end{equation}
whereas the volume of $\Phireg_{\tau,R}(\mc{C})$ (with respect to the Lebesgue measure) is given by 
\begin{equation*}
\left(\sum_{\vec{i} \in T_{\tau}(\mc{C})}\Numb_{6\tau}[\vec{i}] \right)! \prod_{\vec{i} \in T_{\tau}(\mc{C})} \left(\frac{\tau}{ \Numb_{6\tau}[\vec{i}]^{1/{\d}}}\right)^{\Numb_{6\tau}[\vec{i}]\d}
\end{equation*}
Taking the logarithm and integrating and integrating over $\C \in \calA$ yields \eqref{VolumeReg}.
\end{proof}

The truncation error after regularization is usually small for point processes of finite energy, as expressed in the next lemma.
\begin{lem} \label{lem:regumarchebien}
Let $\bP$ be in $\probas_s(\Sigma \times \config)$ such that $\bttW(\bP, \meseq)$ is finite.  Let $\eta, \tau$ be fixed, with $0<\tau<\eta^2/2<1$. We have
\begin{align} \label{HvsW2a}
& \sup_{x \in \Rd} \limsup_{ \eta \to 0, \tau \to 0}  \frac{\g(2\tau)}{\tau^d} \Esp_{\bar{P} } [(\Numb_{\tau}(x)^2-1)_+] = 0 \\ 
 \label{HvsW2b}
& \limsup_{ \eta \to 0, \tau \to 0, R \to \infty}  \frac{1}{R^{\d}}
\Esp_{\bar{P} } \left[ \sum_{p \neq q\in \mc{C} \cap \carr_R , \tau \le |p-q|\le \eta^2/2 } \g(p-q)  \right]=0.
\end{align}
\end{lem}
\begin{proof}
For  $\eta \in (0,1)$ and $0 < \tau<\eta^2/2$,  we have by Lemma \ref{discerrtronc} 
\begin{multline*}
\bttW_{\tau} (\bP, \meseq)- \bttW_\eta(\bP, \meseq)
\geq  C \frac{\g(2\tau) }{\tau^\d} \Esp_{\bar{P}} [ (\Numb_{\tau}^2 -1)_+ ] \\  
+ C \Esp_{\bar{P}} \left[  \sum_{p \neq q \in \mc{C} \cap \carr_1 , |p-q| \le \eta^2/2 } \g(p-q) \right]  - o_{\eta}(1)
\end{multline*}
for $C, o_{\eta}$ depending only on $\d, \s$. Letting $\eta \to 0$ and then $\tau \to 0$, it yields
\begin{align*}
& \limsup_{ \eta \to 0}\limsup_{\tau \to 0}  \frac{\g(2\tau)}{\tau^d} \Esp_{\bar{P} } [(\Numb_{\tau}^2-1)_+] = 0 
\\ 
& \limsup_{ \eta \to 0}\limsup_{\tau \to 0} 
\Esp_{\bar{P} } \left[ \sum_{p \neq q\in \mc{C} \cap \carr_1 , \tau \le |p-q|\le \eta^2/2 } \g(p-q)  \right]=0.
\end{align*}
By stationarity we may replace $\Numb_{\tau}$ by $\Numb_{\tau}(x)$  in the first lign, and we may write the sum over $\carr_1$ by an average over $\carr_R$ for any $R > 1$ in the second line. This yields \eqref{HvsW2a}, \eqref{HvsW2b}.
\end{proof}

\subsubsection{Effect on the energy}
We argue that the regularization procedure at scale $\tau$ has a negligible influence on the screened energy e.g. for configurations obtained by the screening procedure of Proposition \ref{Lemscreening}.

\begin{lem} \label{lemmeregpasgrave} 
Let $R > 0, M > 1, \epsilon > 0$ be fixed, satisfying the conditions \eqref{condiScr}, let $K$ be an hyperrectangle and let $(\C, \mu)$ satisfying the assumptions of Proposition \ref{Lemscreening}. Let $\eta < \hal \eta_0$ (with $\eta_0$ as in Proposition \ref{Lemscreening}), and let $0 < \tau < \hal \eta^2$. 

Let $\Phiscr_{\epsilon, \eta,R}(\mc{C}, \mu)$ be the set of configurations generated by the screening procedure, and for any $\Cscr$ in $\Phiscr_{\epsilon, \eta,R}(\mc{C}, \mu)$ let $\Escr$ be the corresponding screened vector field, finally let $\Phireg_{\tau,R}(\Cscr)$ be the set of configurations generated by the regularization procedure applied to $\Cscr$. 

Then for any $\Creg$ in $\Phireg_{\tau,R}(\Cscr)$ there exists a vector field $\Ereg \in \Screen(\Creg, \mu, K)$, such that
\begin{equation}
 \label{regulnetouchepasE} 
\limsup_{\tau \to 0} \sup_{\C \in \Sp}  \sup_{\Cscr \in \Phiscr_{\epsilon, \eta,R}(\mc{C}, \mu)} \sup_{\Creg \in  \Phireg_{\tau,R}(\Cscr)}  \int_{K \times \R^\k} \yg |\Ereg_{\eta}|^2 \\
\leq \int_{K \times \R^\k} \yg |\Escr_{\eta}|^2.
\end{equation}
\end{lem}
\begin{proof} 
Let $\g^{\textrm{Neu}}$ be the unique solution with  mean zero  to 
$$\left\lbrace\begin{array}{ll}
-\div(\yg \nabla \g^{\textrm{Neu}}) = \cds \left( \delta_0 - \frac{1}{|K|}\drd\right) & \  \text{in} \ K \times \R^\k\\
 \nabla \g^{\textrm{Neu}} \cdot \vec{\nu}=0 & \ \text{on} \ \partial  K \times \R^\k ,\end{array}\right.
$$
and let $\g^{\textrm{Neu}}_{\eta}$ be the truncated kernel at scale $\eta$ as above. For any $\Cscr$ in $\Phiscr_{\epsilon, \eta,R}(\mc{C}, \mu)$ and any $\Creg$ in $\Phireg_{\tau,R}(\Cscr)$ let us consider the vector field $\tilde{E}$ generated by the difference $\Creg - \Cscr$ with Neumann boundary conditions on $\partial  K$
$$
\tilde{E}(x) := \int \nabla \g^{\textrm{Neu}}(x-y) (d\Creg - d\Cscr)(y).
$$
Since the regularization procedure preserves the number of points, it is clear that, letting $\Ereg := \Escr + \tilde{E}$, the vector field $\Ereg$ is in $\Screen(\Creg, \mu, K)$. To bound its energy, we integrate by parts, and we claim that
$$
\iint \g_{\eta}^{\textrm{Neu}}(x-y) (d\Creg - d\Cscr)(x) (d\Creg - d\Cscr)(y) \leq C R^\d \tau,
$$
with a constant $C$ depending on $\d, \eta, \om$ (where $\om$ is a bound on the equilibrium density) but not on $\C, \Creg, \Cscr$. 

Indeed, by construction the number of points of $\Cscr$ in $K$ is bounded by $C R^\d$ with $C$ depending only on $\om$. Moreover, again by construction, there is no point of $\Cscr$ or $\Creg$ closer than some constant $\eta_0 >0$ (depending only on $\d, \om$) from the boundary $\partial  K$. The kernel $g_\eta^{\textrm{Neu}} $ is continuous at any point $x$ such that $\dist(x, \partial K) \geq \eta_0$, uniformly with respect to $x$ and to $\eta < \hal  \eta_0$. Finally, still by construction there is the same number of points in $\Creg$ and $\Cscr$, and the minimal connection distance between the points of $\Cscr$ and $\Creg$ is bounded by $C R^{\d} \tau$. We may then write
\begin{equation} \label{regulexplicite}
\iint \g_{\eta}^{\textrm{Neu}}(x-y) (d\Creg - d\Cscr)(x) (d\Creg - d\Cscr)(y)  \leq C R^\d \tau,
\end{equation}
with $C$ depending only on  $ \d, \om, \eta$, and not on $\C,\Creg$ or $\Cscr$.
\end{proof}

\subsection{Artificial configurations.} \label{sec:artificial}
Finally, we state a result concerning the construction of families of “artificial” configurations whose energy is well controlled. This will be used to replace non-screenable configurations.
 
\begin{lem} \label{constructreseau} 
Let $0 < \um \leq \om$ be fixed. There exists $\eta_0>0$ depending only on $\d, \om$ such that the following holds.

Let $R > 0$, let $K$ be a hyperrectangle with sidelengths in $[R,2R]$, and let $\mu$ be in $C^{0,\kappa}(K)$ such that $\um \leq \mu \leq \om$. We assume that 
\begin{equation*}
N_K := \int_{K} \mu \text{ is an integer.}
\end{equation*}

Then there exists a family $\Phigen(K, \mu)$ of configurations with $N_K$ points in $K$ such that for any $\Cgen$ in $\Phigen(K, \mu)$, the following holds:
\item \begin{equation}
\label{loindubordartificiel} \min_{x \in \Cgen} \dist(x, \partial K) \geq \eta_0, \quad \min_{x \neq x' \in \Cgen} |x-x'| \geq \eta_0.
\end{equation}
Moreover, there exists $\Egen$ in $\Screen(\Cgen, \mu, K)$ such that for any $0 < \eta < \eta_0$,
\begin{equation} \label{contEnerconstr2}
\int_{K\times \R^\k} \yg |\Egene|^2 - \c N_K \g(\eta)  \leq  C N_K +
C R^{\d+3-\gamma}\|\mu\|_{C^{0, \kappa}(K)}^2
\end{equation}
with a constant $C$ depending only on $\d, \s, \om,\um$.

There exists some constant $\v_1 > 0$ depending only on $\d, \om, \um$ such that the volume of $\Phigen(K, \mu)$ is bounded below by
\begin{equation}
\label{contVolRes} 
\Leb^{\tens N_K}\left(\Phigen(K, \mu)\right) \geq N_K! \v_1^{N_K}.
\end{equation}

\end{lem}
\begin{proof}
We postpone the proof of Lemma \ref{constructreseau} to Section \ref{sec:constructionreseau}.
\end{proof}

\subsection{Conclusion} \label{sectionupsilon} We may now combine the previous ingredients to accomplish the program stated at the beginning of the section. 
\begin{defi} \label{defi:upsilon}
Let $R, M, \epsilon$ satisfying \eqref{condiScr}, let $\eta < \hal \eta_0$ and let $0 < \tau < \hal \eta^2$. For $\C, \mu, K$ we define a family $\Phimod(\mc{C}, \mu, K)$ (depending on all the other parameters $\eta, \epsilon, M, R, \tau$) of point configurations which are contained in $K$ and have $N_K$ points, in  the following way :
\begin{enumerate}
\item If $(\mc{C}, \mu)$ is \textit{screenable}: 
\begin{enumerate}
\item We consider $E \in \Op^{M, \epsilon}(\mc{C}, \mu)$ such that
\begin{equation} \label{bienbonchamp}
\frac{1}{R^{\d}} \int_{\carr_{R} \times [-R,R]^k}\yg |E_{\eta}|^2 = \H(\mc{C},\mu).
\end{equation}
\item We let $\Phiscr_{\epsilon, \eta,R}(\mc{C}, \mu)$ be the family of point configurations in $K$ obtained by applying Proposition \ref{Lemscreening}. 
\item We let $\Phimod(\mc{C}, \mu)$ be the image by $\Phireg_{\tau,R}$ of the family $\Phiscr_{\epsilon, \eta,R}(\mc{C}, \mu)$.
\end{enumerate}
 
\item If $(\mc{C}, \mu)$ is \textit{not screenable}, we let $\Phimod(\mc{C}, \mu, K)$ be the family $\Phigen(K,\mu)$ defined in Lemma \ref{constructreseau}.
\end{enumerate}
In both cases, we end up with a family of point configuration in $K$ and associated screened electric fields whose energy is well-controlled.
\end{defi}

We may compare the volume of a certain set of configurations in $\carr_R$ with the volume of the resulting configurations after applying $\Phimod$. We distinguish between the cases of a set of \textit{screenable} configurations and a set of \textit{non-screenable} configurations.
\begin{lem} \label{volumeoperation}
Let $\calA$ be a family of point configurations in $\config(\carr_R)$ such that each configuration of $\calA$ has $\Nn$ points in $\carr_R$  and $\Nnint$ points in $\Int_{\epsilon}$. Let $\v \leq \v_0$, as in \eqref{def:v0}, and assume, as in \eqref{condnext},
\begin{equation}\label{condnextb}
N_K-\Nnint \le \frac{|\mathrm{Ext}_\ep|}{2 \v}.
\end{equation} 
\begin{enumerate}
\item If for all $\mc{C} \in \calA$, $(\mc{C}, \mu)$ is in $\Sp$ and \eqref{condnext} holds then there exists $C > 0$ depending only on  $\d, \um$ such that
\begin{multline} \label{estimvolumecasecrant}
\log \Leb^{\tens N_K}\left(\underset{\mc{C} \in A}{\bigcup} \Phimod(\mc{C}, \mu) \right) \ge \log \Leb^{\tens \Nn}(A) \\+\log \left((N_K-\Nnint)!\left( \frac{\v}{|\mathrm{Ext}_\ep|}\right)^{N_K-\Nnint}  \right) + (N_K-\Nn) \log |\mathrm{Ext}_\ep| \\ - C \int_{\mc{C} \in A} \sum_{\vec{i} \in 6 \tau \Zd} \Numb_{6\tau} [\vec{i}] \log \Numb_{6\tau} [\vec{i}] d\Leb^{\otimes \Nn}(\C).
\end{multline}
\item If for all $\mc{C} \in \calA$, $(\mc{C}, \mu)$ is not in $\Sp$  then there exists $\v_1 > 0$ depending only on $\d, \om, \um$ such that
\begin{equation}\label{estimvolumecasnonecrant}
\log \Leb^{\tens N_K}\left(\underset{\mc{C} \in \calA }{\bigcup} \Phimod(\mc{C}, \mu, K)\right) \geq \log \left( N_K! \v_1^{N_K} \right).
\end{equation}
\end{enumerate}
\end{lem}
\begin{proof}
The bound \eqref{estimvolumecasecrant} follows from combining \eqref{PreservVolume1} with \eqref{VolumeReg} whereas \eqref{estimvolumecasnonecrant} follows from \eqref{contVolRes}.
\end{proof}
Let us observe that we have 
\begin{equation*}
\log \Leb^{\tens \Nn}(\calA) \leq \log R^{\d \Nn}, 
\end{equation*}
hence in particular we may re-write \eqref{estimvolumecasnonecrant} as
\begin{equation} \label{estimvolumecasnonecrant2}
\log \Leb^{\tens N_K}\left(\underset{\mc{C} \in \calA }{\bigcup} \Phimod(\mc{C}, \mu, K)\right) \geq \log \Leb^{\tens \Nn}(\calA)  + \log \left( N_K! \v_1^{N_K}  R^{-\d \Nn} \right).
\end{equation}

\section{Construction of configurations}
\label{sec:construction}
This section is devoted to the proof of  Proposition \ref{quasicontinuite}, by exhibiting a set of configurations satisfying the conclusions with a large enough asymptotic (logarithmic) volume.

To do so, we first partition (some subset of) $\R^{\d}$ into hyperrectangles $K$ such that $\int_K \meseq'$ is an integer. Each hyperrectangle $K$ will contain a translate of $\carr_R$ such that $|K|-|\carr_R|$ is small (with respect to the total volume) and each hypercube will receive a point configuration.  Since we want the global configurations to approximate (after averaging over translations) a given tagged point process $\bar{P}$, we will draw the point configuration in each hypercube jointly at random according to a Poisson point process, and standard large deviations results imply that enough of the averages ressemble $\bar{P}$. These configurations drawn “abstractly” at random are then modified by the screening-then-regularizing procedure of the previous section. We eventually obtain a global configuration with $N$ points whose energy can be computed additively with respect to the hyperrectangles. 

At each step we need to check that the transformations imposed to the configurations do not alter much their phase-space volume, their energy, and keep them close to the given tagged process $\bar{P}$.

One of the additional technical difficulties is that the density of the equilibrium measure $\meseq$ is in general not bounded from below (see the assumptions \ref{H5}) and that the support $\Sigma$ cannot be exactly tiled by hyperrectangles. Thus, we first remove a thin layer near the boundary and near the zero set of $\meseq$, where some artificial point configurations will be placed at the end, with negligible contributions to the energy and to the volume. This will require to adapt the corresponding argument from \cite{petrache2014next}, since we are working under the more general Assumption \ref{H5}.
 
In this section, $\bPst$ denotes a stationary tagged point process such that $\bttW(\bPst, \meseq)$ is finite, otherwise Proposition \ref{quasicontinuite} reduces to Proposition \ref{SanovbQN}.

\subsection{Subdividing the domain} 
In what follows, $\um$ is some fixed number in $(0,1)$, which will be sent to $0$ at the end of the construction. Until the end of Section \ref{sec:construction}, $\cc$ and $\cC$ will denote positive constants depending only on $\meseq$.
\subsubsection{Interior and boundary.}\label{sec611}
 We start the construction by dividing the domain between a neighborhood of $\Gamma$ (see \ref{H5}), where the density might not be bounded below and which must be treated “by hand”, and a large interior. We recall that $\Sigma$ is the support of the equilibrium measure $\meseq$, that for $N \geq 1$ we let $\Sigma' := N^{1/\d} \Sigma$ and $\meseq' (x') := \meseq(N^{-1/\d} x')$. We also recall that, by Assumption \ref{H4}, the density of $\meseq'$ is bounded above by some $\om$.

Let us recall that $\{\Gammaj\}_{j \in J}$ are the connected components from the boundary of $\{\meseq>0\}$ and we let $\Gammapj := N^{1/\d} \Gammaj$. From Assumption \ref{H5}, each $\Gamma^{(j)}$ is a $C^1$ manifold of dimension $0 \leq \llj \le \d-1$ and there exists $\alphaj \ge 0$ such that 
\begin{align}
\label{reass5} & \cc N^{-\alphaj/\d}\, \dist (x, \Gammapj)^{\alphaj} \le \meseq'(x)\le \cC  N^{-\alphaj/\d} \,\dist(x, \Gammapj)^{\alphaj}, \\
\label{reass5b} & \|\meseq' \|_{C^{0,\min(\alphaj,1)}} \le C N^{-\min(\alphaj,1)/\d}
 \end{align}in a neighborhood of $\Gammapj$. 
In the case $\alphaj \geq 1$, we also have 
\begin{equation*}
\label{reass52} 
 |\nab \mu_V' (x) |  \le C N^{-\alphaj/\d}   \dist(x, \Gammapj)^{\alphaj-1}
 \end{equation*}
in a neighborhood of $\Gammapj$. 
 
\def \cu{c(\um)}
\begin{lem}
For any  $\um > 0$, there exists a constant $\cu > 0$ such that for $N \geq 1$ and each $j \in J$, we may find $T_j$ satisfying
\begin{equation} \label{choixdeT}
 T_j \in \left[\frac{N^{1/{\d}}}{\cc^{1/\alphaj}}\um^{1/\alphaj}, \frac{ N^{1/{\d}} }{\cc^{1/\alphaj} } \um^{1/\alphaj} + \cu \right]
 \end{equation}
 and such that, letting
 \begin{equation}
 \label{def:SigmaTp}
 \Sigma_T' :=\{x\in \Sigma' , \dist(x, \Gammapj) \ge T_j\text{ for } j \in J\},
 \end{equation}
we have 
$$
\int_{\Sigma_T'} d\meseq' \text{ is an integer,}
$$
and the equilibrium density is bounded below as 
\begin{equation} \label{borneinfmeseqsigmaT}
\meseq' \ge \um \text{ on } \Sigma'_{T}.
\end{equation}
\end{lem}
\begin{proof}
Taking any $T_j \geq \frac{N^{1/{\d}}}{\cc^{1/\alphaj}}\um^{1/\alphaj}$ ensures, by \eqref{reass5}, that \eqref{borneinfmeseqsigmaT} will be satisfied, and in particular the constraint that the total mass (for $\meseq'$) is an integer can be ensured on each $\Gammapj$ by an mean value argument applied on an interval of constant length (depending on $\mu$)\footnote{The boundary of $\Gammapj$ has a length of order $N^{\llj/\d}$. If $\llj \geq 1$ we could take an even smaller interval for \eqref{choixdeT}.} as in \eqref{choixdeT}. 
\end{proof}
We see that $\partial \Sigma_T'$ is formed of disjoint $C^1$ connected components   
$$\Gammapj_{T_j} := \{x\in \Sigma' , \dist(x, \Gammapj) = T_j \}.$$
 Moreover, by $C^1$ regularity of the $\Gamma^{(j)}$ we have
\begin{equation} \label{longueur}
\cc N^{\llj/\d} \leq \mathcal{H}^{\llj}(\Gammapj_{T_j}) \leq \cC N^{\llj/\d}, 
\end{equation}
where $\mathcal{H}^{\llj}$ denotes the Hausdorff measure of integer dimension $\llj$.

Finally, by \eqref{choixdeT}, each number $T_j N^{-1/\d}$ converges to a constant value as $N\to \infty$, so the rescaling $N^{-1/\d} \Sigma_T'$ converges (in Hausdorff distance) as $N \to \infty$ to a set $\Sigma_{\um}$ with piecewise $C^1$ boundary. We also let $\Sigma_{\um}' := N^{1/\d} \Sigma_{\um}$.

\subsubsection{Tiling the interior.}  \label{sec:notationtiling}
We now tile $\Sigma_T'$ by hypercubes whose size is large but independent of $N$. 
\begin{lem}[Tiling the interior of the domain]\label{tiling}
There exists a constant $C_0>0$ (depending on $\um, \om$) such that, for any $R>1$ and any $N \geq 1$, there exists a family $\mathcal K_N$ of closed hyperrectangles such that
\begin{itemize}
\item For all $K \in K_N$, $K \subset \Sigma_T'$.
\item The sidelengths of $K$ are between $R$ and $R+C_0/R$, and the sides are parallel to the axes of $\Rd$.
\item They have pairwise disjoint interiors.
\item They fill up $\Sigma_T'$ up to some boundary region
\begin{equation}\label{tile1}
\left\lbrace x\in \Sigma_T': \dist(x,\partial \Sigma_T') \ge C_0 R \right\rbrace \subset  \bigcup_{K\in\mathcal K_N} K.
\end{equation}
\item For all $K\in\mathcal K_N$, we have
\begin{equation}\label{inttile}
\int_K \meseq' \in \mathbb N.
\end{equation}
\end{itemize} 
\end{lem}
\begin{proof}
It is a straightforward modification  of \cite[Lemma 6.5]{sandier20152d}.
\end{proof}
With the notation of Lemma \ref{tiling}, for any $R > 1$ and $N \geq 1$, we define $\Sigmatil$ as
\begin{equation}
\label{def:Sigmapint} \Sigmatil := \bigcup_{K\in\mathcal K_N} K, 
\end{equation}
and we let $\Ntil$ be the integer defined as
 \begin{equation}\label{610b}
\Ntil := \meseq'(\Sigmatil).
\end{equation}

We let $\mNR$ be the number of hyperrectangles in $\mc{K}_N$ and we enumerate the elements of $\mc{K}_N$ as $K_1, \dots, K_{\mNR}$. For any $i \in \{1, \dots, \mNR\}$
\begin{itemize}
\item We let $\x_i$ be the center of $K_i$.
\item We let $\barcarr_i$ be the hypercube of sidelength $R$ with center $\x_i$, and in particular $\barcarr_i \subset K_i$.
\item We define $\Int_{\epsilon,i}, \Ext_{\epsilon,i}$ as
\begin{equation} \label{defintext23}
\Int_{\epsilon,i} := \{x \in \barcarr_i \ | \ \dist(x, \partial \barcarr_i) \geq 2\epsilon R \}, \quad \Ext_{\epsilon,i} := \barcarr_i \backslash \Int_{\epsilon, i}.
\end{equation}
\item We let $N_i := \int_{K_i} \meseq'$, which is by construction an integer. 
\item We denote by $\Nn_i$ (resp. $\Nnint_i$) the number of points of a configuration in $\barcarr_i$ (resp. in $\Int_{\epsilon,i}$).
\end{itemize}

We now gather some estimates about quantities related to the tiling.
\begin{lem} \label{usefulgeometric} We have:
 \begin{align}
 \label{volumetiling} & \lim_{R \ti} \lim_{N \ti} \frac{R^{\d}}{N} \mNR = |\Sigma_{\um}|, \\
\label{Ki} & |K_i| = R^{\d} + R^{\d-2}O_R(1), \\
\label{bornepointstiling} & C_1 R^{\d} \leq N_i \leq C_2 R^{\d}, \\
\label{Ni} & N_i = R^{\d} \meseq'(\x_i) + R^{\d+\kappa} N^{-\kappa/\d} \|\meseq\|_{C^{0, \kappa}(\Sigma)} O_R(1) + R^{\d-2} O_R(1), \\
\label{volumeExterieur} &  |\Ext_{\epsilon,i}| = 2\d\epsilon R^{\d} + \epsilon^2 O_{\epsilon}(1) R^{\d}.
 \end{align}
 The terms $O_R(1), O_N(1)$ and the positive constants $C_1, C_2$ depend only on $\um, \om, \d$. The term $ O_{\epsilon}(1)$ depends only on $\d$.
 \end{lem}
\begin{proof} The asymptotics \eqref{volumetiling} are implied by the more precise estimate
\begin{equation} \label{volumetilingprecis}
\frac{R^{\d}}{N} \mNR= |\Sigma_{\um}|(1+N^{-1/\d} R O_N(1))(1+R^{-2} O_{R}(1)),
\end{equation}
so let us prove the latter. By construction the $m_{N,R}$ hyperrectangles partition $\Sigmatil$ and have sidelengths in $[R, R + C_0/R]$, with $C_0$ depending only on $\um, \om$, hence the following holds 
\begin{equation*} 
m_{N,R}  R^{\d} \leq  |\Sigmatil| \leq m_{N,R} \left(R+\frac{C_0}{R}\right)^\d,
\end{equation*}
and in particular 
\begin{equation} \label{vt2}
m_{N,R}  R^{\d} =  |\Sigmatil| \left(1 +  R^{-2}  O_{R} (1) \right).
\end{equation}
On the other hand, from \eqref{tile1} we see that 
\begin{equation*}
|\Sigma'_T - \Sigmatil| \leq \left| \{x \in \Sigma'_T, \dist(x, \partial  \Sigma'_T) \leq C_0 R\} \right|,
\end{equation*}
whereas from \eqref{longueur} we deduce 
\begin{equation*}
| \{x \in \Sigma'_T, \dist(x, \partial  \Sigma'_T) \leq C_0 R\}| = R N^{1- 1/\d} O_N(1),
\end{equation*}
with a $O_N(1)$ depending only on $\um$.  We thus get
\begin{equation} \label{vt1}
 |\Sigma'_T| = |\Sigmatil| +  N^{1-1/{\d}} R O_N(1).
\end{equation}
Moreover, by definition of $\Sigma_{\um}$ and regularity of $\partial \Sigma$ we have
\begin{equation} \label{vt3}
|\Sigma'_{T}| = |\Sigma'_{\um}| + N^{1-1/{\d}} O_N(1).
\end{equation}
Combining \eqref{vt2}, \eqref{vt1} and \eqref{vt3} yields \eqref{volumetilingprecis}.

 Since the sidelengths of $K_i$ are in $[R, R + C_0/R]$ with a constant $C_0$ depending only on $\um$, we  get \eqref{Ki}. 

 Since $\meseq'$ is bounded above and below on $\Sigma'_{\um}$, \eqref{bornepointstiling} holds with constants depending on $\um, \om$. 

 To get \eqref{Ni} we combine \eqref{Ki} with the Assumption \ref{H4} on the Hölder regularity of the equilibrium measure, which yields
\begin{equation*}
\|\mu'_V(x) - \mu'_V(\x_i)\|_{L^{\infty}(K_i)} \leq \|\meseq\|_{C^{0, \kappa}} R^{\kappa} N^{-\kappa/\d}
\end{equation*}

 The bound \eqref{volumeExterieur} follows from the definition \eqref{defintext23} and elementary estimates.	
\end{proof}

From now on, and until Section \ref{sec6.6} we work only in $\Sigmatil$ defined in \eqref{tile1}. We recall that it can be written as a disjoint union of hyperrectangles (see Figure \ref{figuresigmaint}, where the grey region  corresponds to $\Sigmatil$).
\begin{figure}[h]
\begin{center}
\includegraphics[scale=1]{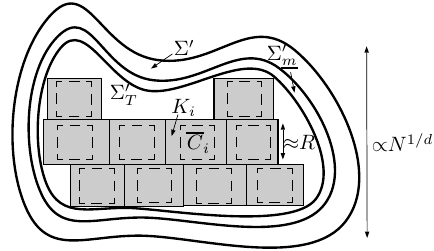}
\end{center}
\caption{The tiling of $\Sigma'$.}
\label{figuresigmaint}
\end{figure}

\subsection{Generating approximating microstates}
We now state a result, in the spirit of Sanov's theorem, in order to generate abstractly a whole family of point configurations in $\Sigmatil$ whose continuous and discrete averages over translations are close to some fixed tagged point process. We call these configurations \textit{approximating microstates}.

For any $\bPst$ in $\probas_{s} (\Sigma \times \config)$ we let 
$\bPst_{\um}$ be the tagged point process induced by restricting the “tag” coordinates to $\Sigma_{\um} \subset \Sigma$ i.e.
$$
\bPst_{\um} := \frac{1}{|\Sigma_{\um}|} \int_{\Sigma_{\um}} \bPst^{x} dx.
$$
The specific relative entropy of $\bPst_{\um}$ will be taken by restricting the tags accordingly
$$
\bERS[\bPst_{\um}|\Poisson^1] := \int_{\Sigma_{\um}} \ERS[\bPst^{x}|\Poisson] \, dx.
$$

\begin{lem} \label{restreindrecestpasgrave}
The tagged point process and its restriction are uniformly close as $\um \to 0$
\begin{equation}
\lim_{\um \to 0}  \sup_{\bPst \in \probas_{s} (\Sigma \times \config)} \dist_{\probas(\Sigma \times \config)} \left(\bPst_{\um}, \bPst\right) = 0
\end{equation}
In particular, for $r> 0$, for $\um$ small enough depending on $r$, we have
\begin{equation*}
B(\bPst_{\um}, r/2) \subset B(\bPst, r) \subset B(\bPst_{\um}, 2r).
\end{equation*}
\end{lem}
\begin{proof}
This follows from the fact that $|\Sigma - \Sigma_{\um}| \to 0$ as $\um \to 0$.
\end{proof}

For $R > 0$, we also denote by $\bPst_{\um,R}$ the restriction of $\bPst_{\um}$ to the hypercube $\carr_R$  (the push-forward of $\bPst_{\um}$ by the map $(x, \mc{C}) \mapsto (x, \mc{C} \cap \carr_R)$). 

Finally, we introduce a constant:
\begin{equation}\label{defcm}
\cmuvum := \mu_V(\Sigma_{\um}) \log \frac{\mu_V(\Sigma_{\um}) }{|\Sigma|}- \mu_V(\Sigma_{\um}) + |\Sigma_{\um}|.
\end{equation}

We recall that we have a family of $\mNR$ hypercubes $\barcarr_i$ of sidelength $R$, with centers $\x_i$ and that the total mass of $\meseq'$ on $\Sigmatil$ is an integer $\Ntil$, as defined in \eqref{610b}.
The following lemma says that the discrete space average as well as the continuum space average of randomly chosen configurations in $\barcarr_i$ occupy a volume around $\bP$ which is given by the specific relative entropy of $\bP$. 

\begin{lem} \label{SanovbQN2} 
Let $(\C_1, \dots, \C_{\mNR})$ be $m_{N,R}$ independent Poisson point processes of intensity~$1$ in each hypercube $\barcarr_i$. Letting $\Numb_i$ be the number of points in $\barcarr_i$, we condition on the following event
\begin{align}
\label{bonnbSanov} & \sum_{i=1}^{\mNR} \Numb_i = \Ntil,
\end{align}
i.e. in \eqref{EqSB2}, \eqref{EqSB3} below we consider the intersection with that event, without conditioning.

We let $\C$ be the point process obtained as the union of the configurations $\C_i$, namely
\begin{equation*}
\C := \sum_{i=1}^{\mNR} \C_i.
\end{equation*}

We define $\bM_{N,R}$ as the law of the following random variable with values in $\Sigmatil  \times \config$:
\begin{equation} \label{moyennediscretisee}
\DiscrAv(\C) := \frac{1}{m_{N,R}} \sum_{i=1}^{m_{N,R}} \delta_{(N^{-1/{\d}} \x_i, \theta_{\x_i} \cdot \C_i)}.
\end{equation}
It is the discrete average of the point configurations contained in the hypercubes $\barcarr_i$, translated by $\x_i\in \barcarr_i$.

We also define $\HM_{N,R}$ as the law of the random variable in $\Sigma'_{\um} \times \config$
\begin{equation*}
\ContAv(\C) := \frac{1}{N|\Sigma_{\um}|} \int_{\Sigma'_{\um}} \delta_{(N^{-1/{\d}} \x, \theta_\x \cdot \C)} \, d\x.
\end{equation*}
It is the continuous average of point configurations contained in the hypercubes $\barcarr_i$, translated by $\x \in \Sigma'_{\um}$. 

Then for any $\bPst \in \probas_{s,1}(\Sigma \times \config)$ the following inequality holds~:
\begin{equation} \label{EqSB2}
\liminf_{R \ti}\frac{1}{R^{\d}} \lim_{\nu \to 0}   \liminf_{N \ti} \frac{1}{m_{N,R}} \log \bM_{N,R}(B(\bPst_{\um,R}, \nu)) \geq - \bERS[\bPst_{\um}|\Poisson^1] - \cmuvum,
\end{equation}
and moreover, for any $\delta > 0$ we have 
\begin{multline} \label{EqSB3}
    \liminf_{R \ti} \frac{1}{R^{\d}} \lim_{\nu \to 0}    \liminf_{N \ti} \frac{1}{m_{N,R}} \log (\bM_{N,R},\HM_{N,R}) \left(B(\bPst_{\um,R}, \nu) \times B(\bPst_{\um}, \delta)\right)
    \\ \geq - \bERS[\bPst_{\um}|\Poisson^1] - \cmuvum 
\end{multline}
where $(\bM_{N,R},\HM_{N,R})$ denotes the joint law of $\bM_N$ and $\HM_N$, with the natural coupling.
\end{lem}
In contrast to the LDP stated in Proposition \ref{SanovbQN}, in the previous lemma we only care about the $\liminf$ because we will only use lower bounds to provide us with some family of configurations.
\begin{proof}
The proof is given in Section \ref{sec:preuvesanovdiscret}.
\end{proof}

\subsection{Regularizing and screening microstates}
Taking the approximating microstates from the previous Lemma \ref{SanovbQN2}, we apply to them the screening-then-regularization procedure described in Section \ref{sectionupsilon}. We obtain the following:
\begin{lem}\label{msregularized} Let $\bar{P}\in \probas_{s,1}(\Sigma \times \config)$ of finite energy, and 
\begin{itemize}
\item Let $M > 1, R > 0, \epsilon > 0$ satisfying the conditions \eqref{condiScr}. 
\item Let $0 < \eta < \hal \eta_0$, with $\eta_0$ depending only on $\d, \om$ as in Proposition \ref{Lemscreening}.
\item Let $0 < \tau < \hal \eta^2$.
\item Let $\delta_1 > 0$ and $\nu > 0$.
\item Let $N \geq 1$.
\end{itemize}
There exists a family $\Amod$ (depending on all the previous parameters) of point configurations in $\Sigmatil$, such that each configuration $\Cmod \in \Amod$ can be decomposed as  
$$ \Cmod= \sum_{i=1}^{m_{N,R}}\Cmod_i,$$ 
where $\Cmod_i$ is a configuration in $K_i$, and satisfies the following properties.
\begin{itemize}
\item The continuous average $\ContAv(\Cmod)$, defined as
$$
\ContAv(\Cmod) := \frac{1}{|\Sigma_{\um}'|} \int_{\Sigma_{ \um}'} \delta_{(N^{-1/{\d}} \x, \theta_{\x} \cdot \Cmod)} \, d\x, 
$$
is asymptotically close to $\bPst_{\um}$ for the distance on $\probas(\Sigma_{\um} \times \config)$
\begin{equation} \label{moycontrestpr}
\limsup_{\epsilon \to 0, M \ti, R \ti, \tau \to 0, \nu \to 0, N \ti} \sup_{\Cmod \in \Amod} d_{\probas(\Sigma_{\um} \times \config)} \left( \ContAv(\Cmod), \bPst_{\um} \right) \leq \delta_1.
\end{equation}
\item  The truncation error is asymptotically small
\begin{equation} \label{errdevpet}
\limsup_{\eta \t0, M \ti, R \ti, \tau \to 0, \nu \to 0, N \ti} \sup_{\Cmod\in \Amod } \frac{1}{N} \sum_{p \neq q \in \Cmod, |p - q| \leq \eta} \g(p - q) = 0.
\end{equation}
\item  For any $\Cmod \in \Amod$ there exists $\Emod$ in $\Screen(\Cmod, \meseq', \Sigmatil)$ satisfying 
\begin{equation} \label{usc2} 
\limsup_{\epsilon \t0, M\ti, R\ti,\tau\t0,\nu \t0, N\ti} \left( \frac{1}{|\Sigmatil|} \int_{\Sigmatil \times \R^\k} \yg |\Emod_\eta|^2 - \Esp_{\bPst_{\um}} \left[ \H \right] \right)\le 0.
\end{equation}
\item There is a good volume of such microstates
\begin{equation} \label{volrestgrd}
\liminf_{\epsilon \t0, M \ti, R \ti, \tau \to 0, \nu \t0, N\to \infty}   \frac{1}{|\Sigmatil| } \log \frac{\Leb^{\Ntil}}{|\Sigmatil|^{\Ntil}} \left( \Amod \right)  \geq - \bERS[\bP_{\um}|\Poisson^1] - \cmuvum  .
\end{equation}
\end{itemize}
\end{lem}
\begin{proof} If a point configuration $\C$ can be decomposed as
$$
\C := \sum_{i=1}^{m_{N,R}} \C_i, 
$$
where $\C_i$ is a point configuration in $K_i$, we will write as above
\begin{equation*}
 \DiscrAv(\C) := \frac{1}{m_{N,R}} \sum_{i=1}^{m_{N,R}} \delta_{(N^{-1/{\d}} \x_i, \theta_{\x_i} \cdot \C_i)}, \quad\ContAv(\C) := \frac{1}{|\Sigma_{\um}'|} \int_{\Sigma_{ \um}'} \delta_{(N^{-1/{\d}} \x, \theta_{\x} \cdot \C)} \, d\x.
\end{equation*}

\subsubsection{The family $\Aabs$.}
For any $\delta, \nu, N, R$ positive, by Lemma \ref{SanovbQN2} we know that there exists a family\footnote{“abs” as “abstract” because we generate them abstractly using Sanov's theorem} $\Aabs$of configurations which can be decomposed as 
\begin{equation} \label{Cabsdecompo}
\Cabs = \sum_{i=1}^{m_{N,R}} \Cabs_i,
\end{equation}
where $\Cabs_i$ is a point configuration in the hypercube $\barcarr_i$, and with $\Ntil$ total points, such that 
\begin{equation} \label{DiscretContgent}
\DiscrAv(\Cabs) \in B(\bP_{\um, R}, \nu), \quad \ContAv(\Cabs) \in B(\bP_{\um}, \delta).
\end{equation}
According to \eqref{EqSB3}, the logarithmic volume of this family can be bounded below asymptotically as
\begin{equation} \label{boundbelvolreg}
\liminf_{R\ti, \nu \to 0, N\ti} 
\frac{1}{m_{N,R} R^{\d}} \log \frac{\Leb^{\Ntil}}{(m_{N,R}R^{\d})^{\Ntil}} \left( \Aabs \right)\\ \ge -\bERS[\bPst_{\um}|\Poisson^1] - \cmuvum.
\end{equation}

\subsubsection{From $\Aabs$ to $\Amod$.}
We let $\Amod$ be the set of configurations obtained after applying the procedure described in Section \ref{sec:screening}. For each $\Cabs$ in $\Aabs$ we decompose $\Cabs$ as in \eqref{Cabsdecompo} and for any $i= 1, \dots, m_{N,R}$ we let $\Phimod_i(\Cabs)$ be the set of configurations obtained after applying the map $\Phimod$. In the following, for good definition, we have to conjugate by the translation of vector $\x_i$, so that $\Phimod_i$ is eventually a family of configurations in $K_i$.
$$
\Phimod_i(\Cabs) := \theta_{- \x_i} \cdot \Phimod(\theta_{\x_i} \cdot \Cabs_i, \mu'_V(\x_i + \cdot), \theta_{\x_i}\cdot K_i).
$$
We then let $\overline{\Phimod}(\Cabs)$ be the Cartesian product
$$
\overline{\Phimod}(\Cabs) := \prod_{i=1}^{m_{N,R}}  \Phimod_i(\Cabs)
$$
and $\Amod$ (“mod” as “modified”) is defined as the set of configurations 
$$
\Cmod := \sum_{i=1}^{m_{N,R}} \Cmod_i,  
$$
where $(\Cmod_1, \dots, \Cmod_{\mNR})$ belongs to $\overline{\Phimod}(\Cabs)$.

For any $\Cabs$ in $\Aabs$, with $\Cabs = \sum_{i=1}^{\mNR} \Cabs_i$, we denote by $I_1$ the set of indices such that $(\Cabs_i, \meseq')$ is screenable. The following fact will be used repeatedly. 
\begin{claim} \label{claim:souventecrantable}
\begin{equation} \label{souventecrantableN}
\lim_{M \ti, R \ti, \nu \t0, N \ti} \inf_{\Cabs \in \Aabs} \frac{\# I_1}{m_{N,R}} =1.
\end{equation}
\end{claim}
\begin{proof}
Combining \eqref{presquetousscreenable} with the fact that $\DiscrAv(\Cabs) \in B(\bP_{\um, R}, \nu)$ (see \eqref{DiscretContgent}), we see that \eqref{souventecrantableN} holds when asking whether $(\Cabs_i, \meseq'(\x_i))$ is screenable instead of $(\Cabs_i, \meseq')$ (with a varying background). To deal with the variation of $\meseq$, we use the following Hölder bound (with a $O_R(1)$ depending only on $\um, \d$)
$$
\| \meseq'(\x) - \meseq'(\x_i) \|_{L^{\infty}(K_i)} \leq \|\mu\|_{C^{0,\kappa}(\Sigma)}  R^{\kappa} N^{-\kappa/\d} O_R(1),
$$
together with the fact that $\Sp$ is open as stated in Lemma \ref{lem:ouverture}.
\end{proof}

Let us now check that the family $\Amod$ satisfies the desired properties.

\subsubsection{Distance.} 
We want to show that the continuous average satisfies \eqref{moycontrestpr}. We thus claim that the screening-then-regularizing procedure preserves the closeness of  the continuous average to $\bar{P}_{\um}$ (however in general it does not preserve that of the discrete average). To prove that claim, we have to distinguish between hyperrectangles where the configuration is screenable (where the configuration is only modified in a thin layer or by moving points by a distance at most $\tau$) and hyperrectangles where it is not (where the configuration is then completely modified).

Let $\Cmod = \sum_{i=1}^{m_{N,R}} \Cmod_i$ be in $\Amod$ (where $\Cmod_i$ is the point configuration in the hyperrectangle $K_i$), we may find 
$\Cabs = \sum_{i=1}^{m_{N,R}} \Cabs_i$  in $\Aabs$ such that $\Cmod$ has been obtained from $\Cabs$ by screening-then-regularizing. 

\begin{claim} \label{claim:conttodiscr}
We may evaluate the distance between the continuous averages of $\Cabs$ and $\Cmod$ in terms of the distance between the configurations in each hypercube $K_i$. 
\begin{multline} \label{rameneaKi}
\limsup_{\um \to 0, \epsilon \to 0, R \ti, N \ti} \sup_{\Cabs, \Cmod} d_{\probas(\Sigmaum \times \config)} \left(\ContAv(\Cabs), \ContAv(\Cmod) \right)  \\ - \frac{1}{\mNR} \sum_{i=1}^{\mNR} \dconfig(\theta_{\x_i} \Cabs_i, \theta_{\x_i} \Cmod_i) = 0.
\end{multline}
\end{claim}
\begin{proof}
For $\delta > 0$ fixed, by Lemma \ref{Ldistconfig} we may find $k \geq 1$ such that for any $\C \in \config$ 
$$
\dconfig(\C, \C \cap \carr_k) \leq \delta.
$$
For any $i = 1, \dots, \mNR$, for any $\x \in K_i$ such that $\dist(\x, K_i \backslash \barcarr_i) \geq k$, we have
$$
\left(\theta_{\x} \cdot \C\right) \cap \carr_k = \left(\theta_{\x} \cdot \C_i \right) \cap \carr_k, 
$$
and also there exists $C$ depending only on $\d$ such that, for $F \in \Lip(\Sigmaum, \config)$,
$$
\left| F\left(N^{-1/d} \x, \theta_{\x} \cdot \C_i \right) - F\left(N^{-1/d} \x_i, \theta_{\x_i} \cdot \C_i\right) \right| \leq C R N^{-1/\d}.
$$
In particular, the distance between the continuous average over translates on points $\x \in K_i$ such that $\dist(\x, K_i \backslash \barcarr_i) \geq k$ and a Dirac mass at $\left(N^{-1/d} \x_i, \theta_{\x_i} \cdot \C_i\right)$ is bounded by $C R N^{-1/\d}$. This allows us to compare a continuous average with a discrete one. 

In the continuous average $\ContAv(\Cmod)$ or $\ContAv(\Cabs)$ we may omit the points $\x \in \Sigmaum$ such that $\x \in \Sigmaum \backslash \Sigmatil$, or that $\x \in \Sigmatil$ but $\dist(\x, K_i \backslash \barcarr_i) \geq k$ (for $i$ such that $\x \in K_i$),  up to an error which is negligible as $N \ti, R \ti, \epsilon \t0, \um \to 0$ because such points represent a negligible volume of translates.

We thus have, for any fixed $\delta > 0$
\begin{equation*}
\limsup_{\um \to 0, \epsilon \t0, R \ti, N \ti} \sup_{\C = \sum \C_i} d_{\probas(\Sigmaum \times \config)} \left(\ContAv(\C), \frac{1}{\mNR} \sum_{i=1}^{\mNR} \delta_{\left(N^{-1/d} \x_i, \theta_{\x_i} \cdot \C_i\right)} \right) \leq  \delta.
\end{equation*}
On the other hand, we have of course
$$
\left| F\left(N^{-1/d} \x_i, \theta_{\x_i} \cdot \Cabs_i\right) - F\left(N^{-1/d} \x_i, \theta_{\x_i} \cdot \Cmod_i\right) \right| \leq  \dconfig (\theta_{\x_i}  \Cabs_i, \theta_{\x_i}  \Cmod_i).
$$
By the triangle inequality, we obtain \eqref{rameneaKi}.
\end{proof}

Next, for any $i = 1, \dots, m_{N,R}$ we want to evaluate $\dconfig \left(\Cabs_i, \Cmod_i\right)$. Let us recall that $I_1$ denotes the set of indices $i = 1, \dots, m_{N,R}$ such that $(\Cabs_i, \meseq')$ is in $\Sp$ and $I_2$  the set of indices such that $(\Cabs_i, \meseq')$ is not in $\Sp$.
\begin{claim} \label{claim:discrproche}
We have 
\begin{equation}
\limsup_{\tau \to 0, R \ti} \frac{1}{\mNR} \sum_{i=1}^{\mNR} \dconfig(\theta_{\x_i} \Cabs_i, \theta_{\x_i} \Cmod_i) - \frac{\# I_2}{\mNR}  = 0
\end{equation}
\end{claim}
\begin{proof}
The distance $\dconfig$ is bounded by $1$, hence we may write
\begin{equation} \label{distancecasI2}
\sum_{i \in I_2} \dconfig\left(\theta_{\x_i}  \Cabs_i, \theta_{\x_i} \Cmod_i\right) \leq \# I_2.
\end{equation}

On the other hand, we have
\begin{equation} \label{distancecasI1}
\limsup_{\tau \to 0,  R \ti} \sup_{\Cabs, \Cmod} \dconfig \left(\theta_{\x_i} \Cabs_i, \theta_{\x_i} \Cmod_i\right)  = 0.
\end{equation}
Indeed, the screening procedure does not affect the points in $\theta_{\x_i} \Int_{\epsilon, i}$ which contains an hypercube $\carr_{R/2}$, hence as $R \ti$ the distance between $\theta_{\x_i}  \Cabs_i$ and any associated screened configuration tends to $0$ (uniformly with respect to all the other parameters). Then the regularization procedure has a negligible effect on the distance as $\tau \to 0$ (also uniformly with respect to all the other parameters) as stated in \eqref{distanceregular}.
\end{proof}

Finally, combining Claim \ref{claim:souventecrantable}, Claim \ref{claim:conttodiscr} and Claim \ref{claim:discrproche}, we obtain \eqref{moycontrestpr}.

\subsubsection{Truncation.}  \label{sectruncation}
Let $\Cmod = \sum_{i=1}^{m_{N,R}} \Cmod_i$ be in $\Amod$ and let $\Cabs$ such that $\Cmod$ has been obtained from $\Cabs$ by screening-then-regularizing. 

By construction (see \eqref{loindubord} for $i \in I_1$ and \eqref{loindubordartificiel} for $i \in I_2$, if $\eta$ is small enough (depending only on $\d, \um, \om$) the only pairs of points in $\Cmod$ at distance less than $\eta$ are included in $\barcarr_i$ for some $i \in I_1$.

For $i \in I_1$, we may apply \eqref{controletroncatureregul} and write, with $C$ depending only on $\d, \s$:
\begin{multline*} 
\sum_{p \neq q \in \Cmod_i, |p - q| \leq \eta} \g(p - q) \\ 
\leq
C \g(\tau) \sum_{\vec{i} \in 6\tau \Zd} \left(\Numb_{12\tau} [\vec{i}]^2(\Cabs_i)-1\right)_+ 
+ \sum_{p\neq q \in \Cabs_i , \tau \le |p-q|\le 2\eta } \g(p-q).
\end{multline*}

The condition \eqref{rajouteborneNscreen} of screenability implies that $\Cabs_i$ has at most $MR^{\d}$ points hence we  could write the previous equation as
\begin{multline} \label{controletruncationI12}
\sum_{p \neq q \in \Cmod_i, |p - q| \leq \eta} \g(p - q) \leq
C \g(\tau) \sum_{\vec{i} \in \tau \Zd}(\Numb_{12\tau} [\vec{i}]^2(\Cabs_i)-1)_+ \wedge M^2R^{2\d}) \\
+ \sum_{p\neq q \in \Cabs_i , \tau \le|p-q|\le 2\eta } \g(p-q) \wedge MR^{\d} \g(\tau).
\end{multline}
This re-writing is mostly technical, indeed we need to use \textit{bounded} functions in order to test them against the weak convergence of point processes.

By a Fubini-like argument we get
\begin{equation*}
\frac{1}{\mNR} \sum_{i = 1}^{\mNR}\sum_{\vec{i} \in \tau \Zd} (\Numb_{12\tau} [\vec{i}]^2(\Cabs_i)-1)_+ \leq \frac{C}{\tau^\d} \Esp_{\DiscrAv(\Cabs)} \left[ \left( \Numb_{12 \tau}(x)^2 - 1 \right)_+ \right], 
\end{equation*} 
and on the other hand we have
\begin{equation*}
\frac{1}{\mNR} \sum_{i = 1}^{\mNR} \sum_{p \neq q \in \Cabs_i , \tau \le|p-q|\le 2\eta } \g(p-q) \leq \Esp_{\DiscrAv(\Cabs)} \left[ \sum_{p \neq q \in \C \cap \carr_R , \tau \le|p-q|\le 2\eta } \g(p-q) \right],
\end{equation*}
and the same expressions hold with the bounded functions of \eqref{controletruncationI12}.

We now combine \eqref{HvsW2a}, \eqref{HvsW2b} with the assumption that $\DiscrAv(\Cabs)$ is close to $\bP$ (see \eqref{DiscretContgent}), and we get \eqref{errdevpet}.

\subsubsection{Energy.}
By construction, for any $i =1 ,\dots, \mNR$ we have a vector field $\Emod_i$ in $\Screen(\Cmod_i, \meseq', K_i)$. Setting 
$$
\Emod := \sum_{i=1}^{\mNR} \Emod_i \indic_{K_i\times \R^\k}
$$ 
provides a vector field in $\Screen(\Cmod, \meseq', \Sigmatil)$, whose energy we now have to bound.
 
For $i \in I_1$ the energy is bounded after screening as in \eqref{erreurEcrantage} and after regularization as in \eqref{regulnetouchepasE} (see also \eqref{regulexplicite}). It yields, for some constant $C_1$ depending only on $\s,\d,\um,\om$ and some constant $C_2$ depending only on $R, \d, \om, \eta$
\begin{multline}
\label{energiesurI1} 
\int_{K \times \R^\k} \yg |\Emod_{i,\eta}|^2 \leq \H(\Cmod_i, \meseq')  \\ + C_1 \left(\g(\eta) M  \ep R^\d + R^{\d+3-\gamma} N^{-2/\d} + \left( \g(\eta) M  \ep R^{2\d+3-\gamma} N^{-2/\d} \right)^{\hal} \right)  + C_2 R^\d \tau.
\end{multline}
For $i \in I_2$, the energy is bounded as in \eqref{contEnerconstr2}. It yields, for some constant $C_1$ depending only on $\s,\d,\um,\om, \|\meseq\|_{C^{0, \kappa}(\Sigma)}$ 
\begin{equation}
\label{energiesurI2}
\int_{K\times \R^\k} \yg |\Egene|^2   \leq  - \c N_i \g(\eta) +  C_1 R^\d +
C_1 R^{\d+3-\gamma}N^{-2/\d}.
\end{equation}
In \eqref{energiesurI1}, \eqref{energiesurI2} we have used the fact that the Hölder norm of $\meseq'$ on $K_i$ is bounded by  $\|\meseq\|_{C^{0, \kappa}} N^{-1/\d}$.

We thus have, combining \eqref{energiesurI1}, \eqref{energiesurI2} and summing the contributions of each $K_i$,
\begin{multline}
\int_{\Sigmatil \times \R^{\k}} \yg |\Emod_{\eta}|^2 + \c \Ntil \g(\eta) \leq 
\sum_{i \in I_1} \left( \H(\Cmod_i, \meseq') + \c N_i \g(\eta) \right)  \\ + \# I_2 C_1 R^\d 
+ \# I_1 C_1 \left(\g(\eta) M  \ep R^\d + R^{\d+3-\gamma} N^{-2/\d} + \left( \g(\eta) M  \ep R^{2\d+3-\gamma} N^{-2/\d} \right)^{\hal} \right)  \\ + \# I_1 C_2 R^\d \tau + \# I_2 C_1 R^{\d+3-\gamma}N^{-2/\d}.
\end{multline}

Using \eqref{HvsW} and \eqref{DiscretContgent} we get
\begin{equation}
\limsup_{\eta \to 0, M \ti,  R \ti,  \nu \to 0} \frac{1}{\mNR} \sum_{i \in I_1} \left( \H(\Cmod_i, \meseq') + \c N_i \g(\eta) \right) \leq \bttW(\bP_{\um}, \meseq).
\end{equation}

On the other hand, the error terms are seen to be negligible with respect to $N$ when taking the limits in the correct order.  Indeed, using \eqref{souventecrantableN} we get
$$
\limsup_{M \ti, R \ti, \nu \t0, N \ti} \frac{1}{N} C_1 \# I_2  R^\d = 0.
$$
We also have, by a direct computation
\begin{align*}
& \limsup_{\eta \to 0, \epsilon \to 0, M \ti, R \ti, N \ti} \frac{\mNR}{N}  \g(\eta) M  \ep R^\d + R^{\d+3-\gamma} N^{-2/\d} = 0 \\
& \limsup_{\eta \to 0, \epsilon \to 0, M \ti, R \ti, N \ti} \frac{\mNR}{N}   \left( \g(\eta) M  \ep R^{2\d+3-\gamma} N^{-2/\d} \right)^{\hal} = 0, \\
& \limsup_{R \ti, \tau \to 0, N \ti }  \frac{\mNR}{N} C_2 R^\d \tau = 0.
\end{align*}
We thus finally obtain \eqref{usc2}.

\subsubsection{Volume.}
We will use the notation of Section \ref{sec:notationtiling} and the results of Lemma~\ref{usefulgeometric}.

In order to apply Lemma \ref{volumeoperation}, where the volume estimates have been summarized, we find $\v$ such that $\v \leq \v_0$ (as in \eqref{def:v0}) and that for any $i \in I_1$
\begin{equation} \label{condnextc}
N_i - \Nnint_i \leq \frac{|\mathrm{Ext}_\ep|}{2 \v}.
\end{equation}

\begin{claim} \label{claim:changementv0}
There exists $C$ depending only on $\d, \um, \om$ such that, setting
\begin{equation}
\label{def:v1}
\v(\d,\om, \um, \epsilon) := \min(\v_0, C \epsilon),
\end{equation}
the condition \eqref{condnextc} is satisfied.
\end{claim}
\begin{proof}
By Lemma \ref{usefulgeometric} we know that $N_i \leq C R^\d$, and $|\mathrm{Ext}_\ep| \geq  \d \epsilon R^{\d}$ (for $\epsilon$ small enough depending only on $\d$), hence taking $\v$ smaller than some multiple of $\epsilon$ ensures  \eqref{condnextc}.
\end{proof}

Applying the conclusions of Lemma \ref{volumeoperation} for $i \in I_1$ and $i \in I_2$ separately, we obtain, for some $C, \v_1$ depending only on $\d, \um, \om$.
\begin{multline}\label{contVol} 
\log \Leb^{\otimes \Ntil} \left( \Amod \right) -  \log \Leb^{\otimes \Ntil}  \left( \Aabs \right) \\ 
\geq \int_{\Aabs} \sum_{i \in I_1}  \log\left( (N_i - \Nnint_i)! \left(\frac{\v}{|\Ext_{\epsilon}|}\right)^{N_i - \Nnint_i}\right) 
+ (N_i- \Nn_i) \log |\Ext_{\ep}| \\
- C_1 \sum_{i \in I_1} \sum_{\vec{i} \in 6 \tau \Zd} \Numb(\vec{i}, 6 \tau) \log \Numb(\vec{i}, 6 \tau) + \sum_{i\in I_2 }\log \left( N_i ! \v_1^{N_i} R^{-\d\Nn_i}\right) \, d\Leb^{\otimes \Ntil}(\Cabs). 
\end{multline} 
In \eqref{contVol}, for $i \in I_1$ fixed, the sum $\sum_{\vec{i} \in 6 \tau \Zd} \Numb(\vec{i}, 6 \tau) \log \Numb(\vec{i}, 6\tau)$ is implicitely applied to the configuration $\Cabs_i$.

First, we control the terms concerning $i \in I_1$. The successive estimates are split into several claims.
\begin{claim} \label{claim:StirJen}
For any $\Cabs$ we have
\begin{multline} \label{StirJen}
\sum_{i \in I_1}  \log\left( (N_i - \Nnint_i)! \left(\frac{\v}{|\Ext_{\epsilon}|}\right)^{N_i - \Nnint_i}\right) 
+ (N_i- \Nn_i) \log |\Ext_{\ep}| 
\\ 
\geq \# I_1 |\Ext_{\epsilon}| \left( \frac{1}{\# I_1} \sum_{i \in I_1} \frac{N_i - \Nnint_i}{|\Ext_{\ep}|} \right) \log \left( \frac{1}{\# I_1} \sum_{i \in I_1} \frac{N_i - \Nnint_i}{|\Ext_{\ep}|} \right) \\
+ \sum_{i \in I_1} \left(\Nnint_i - \Nn_i \right) \log |\Ext_{\ep}| + \sum_{i \in I_1} \left(N_i - \Nnint_i\right) (\log \v -1).
\end{multline}
\end{claim}
\begin{proof}
First, we use Stirling's estimate and get
\begin{multline*}
\sum_{i \in I_1}  \log\left( (N_i - \Nnint_i)! \left(\frac{\v}{|\Ext_{\epsilon}|}\right)^{N_i - \Nnint_i}\right) + (N_i- \Nn_i) \log |\Ext_{\ep}|
\\ \geq  \sum_{i \in I_1} (N_i - \Nnint_i) \log (N_i - \Nnint_i) - (N_i - \Nnint_i) - (N_i - \Nnint_i) \log |\Ext_{\epsilon}| \\ + (N_i - \Nn_i) \log |\Ext_{\epsilon}| + (N_i - \Nnint_i) \log \v.
 \end{multline*}
 We may  re-write this as
 \begin{multline*}
 \# I_1 |\Ext_{\epsilon}|  \left( \frac{1} {\# I_1} \sum_{i \in I_1} \frac{N_i - \Nnint_i}{|\Ext_{\ep}|} \log \frac{ N_i - \Nnint_i}{|\Ext_{\ep}|} \right) \\ + \sum_{i \in I_1} (N_i - \Nn_i) \log |\Ext_{\epsilon}| + (N_i - \Nnint_i) (\log \v  - 1). 
 \end{multline*}
and we get \eqref{StirJen} by applying Jensen's inequality to the convex map $x \mapsto x \log x$.
 \end{proof}

\begin{claim} \label{claim:decompoNinint}
\begin{multline} \label{decompoNiNninti}
\sum_{i \in I_1} N_i - \Nnint_i =  \#I_1 |\Extep| + \sum_{i \in I_1} \left(\meseq'(\x_i) |\Intep| - \Nnint_i\right)  \\
+ \#I_1 |\Extep| \left(\frac{\mNR}{\# I_1} \frac{\meseq'(\Sigmatil)}{|\Sigmatil|}(1 + O_R(R^{-2})) - 1\right) + N\left(R^{\kappa} N^{-\kappa/\d} O_R(1) + R^{-2} O_R(1) \right).
\end{multline}
\end{claim}
\begin{proof}
We start by the following decomposition
\begin{multline} \label{predecompoNiint}
\sum_{i \in I_1} N_i - \Nnint_i = \sum_{i \in I_1} \left( \meseq'(\x_i) R^\d - \Nnint_i \right) + \sum_{i \in I_1} \left( N_i - \meseq'(\x_i) R^\d \right) \\
= \sum_{i \in I_1} \left( \meseq'(\x_i)  |\Intep| - \Nnint_i \right) + \sum_{i \in I_1} \meseq'(\x_i) |\Extep| + N \left(R^{\kappa} N^{-\kappa/\d} O_R(1) + R^{-2} O_R(1)\right).
\end{multline}
We have used the fact that $|\Intep| + |\Extep| = R^{\d}$, and we have used \eqref{Ni} to control the error $N_i - \meseq'(\x_i) R^\d$. Using again \eqref{Ni} together with \eqref{volumetiling} we find
\begin{multline} \label{sommemuxi}
\sum_{i \in I_1} \meseq'(\x_i)  |\Extep|  = \# I_1 |\Extep| \left(\frac{\mNR}{\# I_1} \frac{\meseq'(\Sigmatil)}{|\Sigmatil|}(1 + O_R(R^{-2}))\right) \\
+  N \left(R^{\kappa} N^{-\kappa/\d} O_R(1) + R^{-2} O_R(1)\right).
\end{multline}
Combining \eqref{predecompoNiint} and \eqref{sommemuxi} yields \eqref{decompoNiNninti}.
\end{proof}

\begin{claim}  \label{claim:volumepourI1a}
We have:
\begin{align}
\label{limnuNintNi} & \limsup_{R \ti, \nu \t0, N \ti} \frac{1}{N} \sum_{i \in I_1} \left|\Nnint_i - \meseq'(\x_i) |\Int_{\epsilon}|\right| = 0,\\
 \label{sharpdiscrestimate} &\limsup_{R \ti, \nu \t0, N \ti} \frac{1}{N} \sum_{i \in I_1} \left|\Nn_i - \meseq'(x_i) R^\d\right|  = 0.
\end{align}
  \end{claim}
\begin{proof}
Let $\Dint(x,\C)$ be the discrepancy\footnote{We add a truncation $\wedge MR^\d$ in order to get a bounded function. We may do it because \eqref{rajouteborneNscreen} holds.}
$$
\Dint(x,\C) := \left( \Numb_R(\C) - \meseq(x) |\Int_{\epsilon}|\right) \wedge MR^\d
$$
We may observe that
\begin{equation} \label{discrpardiscr}
\frac{1}{\mNR} \sum_{i\in I_1} \left| \Nnint_i - \meseq'(\x_i) |\Int_{\epsilon}|  \right| \leq \Esp_{\DiscrAv(\Cabs)} \left[ \left| \Dint(x,\C) \right| \right],
\end{equation}
On the other hand, the discrepancy estimate of Lemma \ref{LemmeDiscr} implies that
\begin{equation*}
\Esp_{\bPum} \left[ \left(\Dint(x,\C)\right)^2 \right] = R^{\d+\s} O_R(1)
\end{equation*}
with a $O_R(1)$ depending only on $\d, \s, \bP$. Using Cauchy-Schwarz inequality and the fact that $\s < \d$ we thus obtain
\begin{equation} \label{discrparP}
\Esp_{\bPum} \left[ \left| \Dint(x,\C) \right| \right]  = R^{\d} o_R(1),
\end{equation}
a $o_R(1)$ depending only on $\d, \s, \bP$.

Since \eqref{DiscretContgent} holds, combining \eqref{discrpardiscr} and \eqref{discrparP}, we get \eqref{limnuNintNi}, and  \eqref{sharpdiscrestimate} follows by a similar argument.
\end{proof}

Combining Claim \ref{claim:StirJen}, Claim \ref{claim:decompoNinint} and Claim \ref{claim:volumepourI1a}, we can settle the first terms concerning $i \in I_1$.
\begin{claim} \label{claim:voltermesI11}
\begin{multline} \label{voltermesI11}
\liminf_{\um \to 0, \epsilon \to 0, R \ti, \nu \t0, N \ti}  \frac{1}{N} \sum_{i \in I_1}  \log\left( (N_i - \Nnint_i)! \left(\frac{\v}{|\Ext_{\epsilon}|}\right)^{N_i - \Nnint_i}\right)  \\ 
+  \frac{1}{N} \sum_{i \in I_1} (N_i- \Nn_i) \log |\Ext_{\ep}| = 0.
\end{multline}
\end{claim} 
\begin{proof}
It remains to check that the following quantity from \eqref{StirJen} tends to $0$
$$
\frac{1}{N} \sum_{i \in I_1} \left(N_i - \Nnint_i\right) (\log \v -1).
$$
We decompose $N_i - \Nnint_i$ as 
$$
N_i - \Nnint_i = \left( N_i - \meseq'(\x_i) R^{\d}\right) + \left( \meseq'(\x_i) |\Intep| - \Nnint_i\right) + \meseq'(\x_i) |\Extep|.
$$
The contribution of the first two terms in the right-hand side are small as $R \ti, \nu \to 0, N \ti$ (for fixed $\epsilon$) according to Claim \ref{claim:volumepourI1a}. The last term gives a contribution
$$
\sum_{i \in I_1} \meseq'(\x_i) |\Extep|,
$$
which was proven to be of order $\frac{N}{R^{\d}} |\Extep|$ (see \eqref{sommemuxi}), with $|\Extep|$ of order $\epsilon R^\d$. On the other hand, from the choice of $\v$ as in \eqref{def:v1} we see that $\log \v - 1$ is of order $\log \epsilon$.  We thus have
\begin{equation*}
\frac{1}{N} \sum_{i \in I_1} \meseq'(\x_i) |\Extep| (\log \v -1) = O\left( \epsilon \log \epsilon \right),
\end{equation*} 
and thus goes to zero as $\epsilon \to 0$ (depending only on $\um, \d, \s$).
\end{proof}

Concerning the terms $i \in I_1$, we are left to bound the volume loss due to the regularization.
\begin{claim} \label{claim:volumelossI1regul}
\begin{equation}
\label{volumelossI1regul} 
\lim_{R \ti, \tau \t0, \nu \to 0, N \ti}   \frac{1}{N} \sum_{i \in I_1} \sum_{\vec{i} \in 6 \tau \Zd} \Numb_{6\tau} [\vec{i}] \log \Numb_{6\tau} [\vec{i}] = 0
\end{equation}
\end{claim}
\begin{proof}
We argue as in Section \ref{sectruncation}, together with the trivial bound $n \log n \leq (n^2-1)_+$.
\end{proof}

Finally, we turn to the terms concerning $i \in I_2$. 
\begin{claim} \label{claim:I2}
\begin{equation}
\liminf_{R \ti, \nu \to 0, N \ti} \frac{1}{N} \sum_{i\in I_2 }\log \left( N_i ! \v_1^{N_i} R^{-\d\Nn_i}\right) = 0.
\end{equation}
\end{claim}
\begin{proof}
Stirling's formula and elementary manipulations yield
$$
\log \left( N_i ! \v_1^{N_i} R^{-\d\Nn_i}\right) \geq \left(N_i - \Nn_i \right) \log R^{\d} + N_i \log \frac{N_i}{R^\d} - N_i (1 - \log \v_1).
$$
Using \eqref{Ni} we obtain, with $C$ depending only on $\d, \um, \om$
$$
\sum_{i \in I_2} \log \left( N_i ! \v_1^{N_i} R^{-\d\Nn_i}\right) \geq \log R^{\d} \sum_{i \in I_2} \left(N_i - \Nn_i \right) - C \# I_2 R^\d \log \epsilon. 
$$
Discrepancy estimates as in Claim \ref{claim:volumepourI1a} yield that
$$
\frac{1}{\mNR} \sum_{i \in I_2} |N_i - \Nn_i|
$$
 is small with respect to $R^\d$, and in fact it is of order $R^{\hal(\d + \s)}$, with $\s < \d$. In particular, 
$$
\limsup_{R \ti, \nu \to 0, N \ti} \log R^{\d} \frac{1}{N} \sum_{i \in I_2} |N_i - \Nn_i| = 0.
$$
On the other hand, we know that $\# I_2 R^\d$ is negligible with respect to $N$, hence for any $\epsilon$ fixed
$$
\limsup_{R \ti, \nu \to 0, N \ti} \frac{1}{N} \# I_2 R^\d \log \epsilon = 0.
$$
This proves the claim.
\end{proof}

Combining all the previous estimates, we see that the volume loss is negligible with respect to $N$, and thus \eqref{volrestgrd} holds.
\end{proof}
This concludes the proof of Lemma \ref{msregularized}.

\subsection{Completing the construction and conclusion}\label{sec6.6}
There remains to complete the construction in the thin layer $\Sigma' \backslash \Sigmatil$.
\begin{lem} \label{lem:pointsexterieurs} 
Let $N, R$ be fixed and $\eta \in (0,1)$. Let $\um > 0$. 

There exists a family $\Aext$ of point configurations with $N -\Ntil$ points in $\Sigma' \backslash \Sigmatil$ and which satisfy the following.
\begin{enumerate}
\item For any $\Cext$ in $\Aext$, we have
\begin{equation*}
\min_{p \neq q \in \Cext} |p-q| \geq \eta_0, \quad \min_{p \in \Cext} \dist\left(p, \partial \left( \Sigma' \backslash \Sigmatil \right) \right) \geq \eta_0.
\end{equation*}
\item For any $\Cext$ in $\Aext$, there exists a vector field $\Eext \in \Screen\left(\Cext, \meseq', \Sigma' \backslash \Sigmatil\right)$, and it satisfies
\begin{equation} \label{energieext}
\int_{(\Sigma' \backslash \Sigmatil) \times \R^\k} \yg |\Eext_{\eta}|^2 \leq C (|\Sigma'| - |\Sigmatil|),
\end{equation} 
for some $C$ depending only on $\d, \s, \om$.
\item The volume of $\Aext$ is bounded below
\begin{equation} \label{volumeext}
\Leb^{\tens (N -\Ntil)} \left(\Aext \right) \geq C^{(N - \Ntil)} (N-\Ntil)! \ , 
\end{equation}
for some $C$ depending only on $\d,\s$ and $\om$.
\end{enumerate}
\end{lem}
\begin{proof}
This was performed in \cite[Proposition 7.3, Step 3]{petrache2014next} with a more restrictive assumption concerning the behavior of the equilibrium density near the boundary, namely that
$$
\cc \dist (x, \partial \Sigma)^\alpha \le \meseq(x) \leq \cC \dist(x, \partial \Sigma)^\alpha, 
$$
with the constraint
$$
0< \kappa  \leq 1, \qquad 0\le \alpha \le \frac{2\kappa \d}{2\d-\s}.
$$
This assumption includes the case of the semi-circular law in $\d = 1$, of the circular law in $\d = 2$, and all the cases where the density does not vanish on $\Sigma$ (hence, most of the interesting $\LogD$ cases). However, we would like to cover \textit{critical cases} in $\LogU$ where the density may vanish faster than a square root at the edges, or somewhere in the bulk, and they require a more general assumption near the boundary, as in \ref{H5}. In Section \ref{preuve:finirconstruction}, we explain how to generalize the construction of \cite{petrache2014next} to Assumption \ref{H5}.
\end{proof}

To complete the proof of  Proposition~\ref{quasicontinuite} we need to connect the preceding construction with a large enough volume under $\QNbeta$. The probability that $\QNbeta$ has all its points in $\Sigma$ is bounded below as follows
\begin{equation} \label{borneinfcondi}
\liminf_{N \ti}  \frac{1}{N} \log \QN \left( \left\lbrace \XN \in \Sigma^N \right\rbrace \right) \geq \log \frac{|\Sigma|}{|\omega|}.
\end{equation}
This is an easy consequence from the definition \eqref{def:QNbeta} of $\QNbeta$, of $\Sigma$ and $\omega$ as in \eqref{defomega}, and of \eqref{asymptotiquezetaNbeta}. Combining the constant $\log \frac{|\Sigma|}{|\omega|}$ with the one of \eqref{defcm}, which  converges to $|\Sigma|-1- \log |\Sigma|$ as $\um \to 0$,  yields the result.

\section{Large deviations for the reference measure} \label{preuvesanov}
In this section, we prove Proposition \ref{SanovbQN}, as well as Lemma \ref{SanovbQN2}. Proposition \ref{SanovbQN} is a \textit{process-level} (or \textit{type~3}) large deviation principle, whereas Lemma \ref{SanovbQN2} is rather Sanov-like (or \textit{type~2}). The proof of Proposition \ref{SanovbQN} relies on a similar result from \cite{georgiz} where the Poisson process $\Poisson^1$ is used as the reference measure instead of $\QNbeta$ (as defined in Section \ref{sec:empwihtoutint}). On the other hand, Lemma \ref{SanovbQN2} relies on the classical Sanov theorem with some adaption to our setting. We believe that at least parts of these (mostly technical) variations around classical results belong to folklore knowledge within the community of Gibbs point processes, but we provide a proof for the sake of completeness.

First, let us say a word about the topology. Large deviation principles for empirical fields (in the \textit{non-interacting case}) hold with a stronger topology on $\config$, called the $\tau$-topology (it can be described as the initial topology associated to the maps $\C \mapsto f(\C)$ for  all bounded measurable functions $f$ which are local in the sense of \eqref{funlocal}), see e.g. \cite{Georgii1} or \cite{seppalainen}. Although we expect both the intermediate result of Proposition \ref{SanovbQN} and our main LDP to hold for the $\tau$-topology, with essentially the same proof, we do not pursue this generality here.

\subsection{Two comparison lemmas}
We start by introducing a notion that allows to replace point processes by equivalent ones (as far as LDP's are concerned). 
\begin{defi} \label{defeas} Let $(X, d_X)$ be a metric space and let $\{R_N\}_N$ and $\{R'_N\}_N$ be two coupled sequences of random variables with values in $X$, defined on some probability spaces $\{(\Omega_N, \mc{B}_N, \pi_N)\}_N$. For any $\delta > 0$ we say that $\{R_N\}_N$ and $\{R'_N\}_N$ are \textit{eventually almost surely (e.a.s.) $\delta$-close} when for $N$ large enough we have
\begin{equation*}
d_X(R_N, R'_N) \leq \delta, \pi_N\text{-a.s.}
\end{equation*}
It two sequences are e.a.s. $\delta$-close for any $\delta >0$ we say that they are \textit{eventually almost surely equivalent (e.a.s.e.)}.
\end{defi}
Let us emphasize that being eventually almost surely equivalent is much stronger than the usual convergence in probability. It is also easily seen to be strictly stronger than the classical notion of \textit{exponential equivalence} (see \cite[Section 4.2.2]{dz}) and thanks to that, large deviation principles may be transfered from one sequence to the other.

\begin{lem} \label{transfertLDP} If the sequences $\{R_N\}_N$ and $\{R'_N\}_N$ are eventually almost surely equivalent and  an LDP with good rate function holds for the law of $\{R_N\}_N$, then the same LDP holds for the law $\{R'_N\}_N$.
\end{lem}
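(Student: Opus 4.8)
The plan is to reduce the statement to the classical transfer principle for \emph{exponentially equivalent} families of measures, as in \cite[Section 4.2.2, Theorem 4.2.13]{dz}, and then, for completeness, to indicate the short self-contained argument. Recall that two coupled sequences $\{R_N\}_N$, $\{R'_N\}_N$ defined on $(\Omega_N,\mc{B}_N,\pi_N)$ are exponentially equivalent at speed $r_N$ if for every $\delta>0$ one has $\limsup_{N\ti}\frac{1}{r_N}\log\pi_N\bigl(d_X(R_N,R'_N)>\delta\bigr)=-\infty$. First I would observe that eventual almost sure equivalence in the sense of Definition \ref{defeas} is strictly stronger: for each fixed $\delta>0$ the probability $\pi_N(d_X(R_N,R'_N)\ge\delta)$ is \emph{equal to $0$} for all $N$ large, so $\frac{1}{r_N}\log\pi_N(d_X(R_N,R'_N)>\delta)=-\infty$ for all such $N$ regardless of the speed $r_N$; hence the two sequences are a fortiori exponentially equivalent at the speed at which the LDP for $\{R_N\}_N$ holds, and the cited theorem applies verbatim.

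Carrying out the direct argument, write $I$ for the (good) rate function governing $\{R_N\}_N$ and $r_N$ for the speed. For the upper bound, I would take a closed set $F\subset X$, fix $\delta>0$, and use the splitting $\pi_N(R'_N\in F)\le\pi_N(R_N\in F^\delta)+\pi_N(d_X(R_N,R'_N)\ge\delta)$, where $F^\delta=\{x:d_X(x,F)\le\delta\}$ is the closed $\delta$-neighbourhood; the second term vanishes for $N$ large, giving $\limsup_N\frac{1}{r_N}\log\pi_N(R'_N\in F)\le-\inf_{F^\delta}I$. Then I would let $\delta\t0$: since $F$ is closed one has $\bigcap_{\delta>0}F^\delta=F$, and the goodness of $I$ (compactness of its sub-level sets) guarantees that $\inf_{F^\delta}I\to\inf_F I$ non-decreasingly, which closes the upper bound. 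For the lower bound, given an open set $G$ and $x\in G$, I would pick $\delta>0$ with $B(x,2\delta)\subset G$ and note that $\{R_N\in B(x,\delta)\}\cap\{d_X(R_N,R'_N)<\delta\}\subset\{R'_N\in G\}$, so that $\pi_N(R'_N\in G)\ge\pi_N(R_N\in B(x,\delta))-\pi_N(d_X(R_N,R'_N)\ge\delta)$; the subtracted term is $0$ for $N$ large, whence $\liminf_N\frac{1}{r_N}\log\pi_N(R'_N\in G)\ge-\inf_{B(x,\delta)}I\ge-I(x)$, and taking the supremum over $x\in G$ yields $-\inf_G I$. The rate function being unchanged, it is still good.

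There is no real obstacle here; the only point that needs a bit of care is the passage $\delta\t0$ in the upper bound, which is precisely where the goodness of $I$ enters (a non-good rate function would in general fail to satisfy $\inf_{F^\delta}I\to\inf_F I$). Everything else is immediate from the definitions, and the stronger notion of eventual almost sure equivalence makes the error probabilities literally zero rather than merely superexponentially small, so no estimate on the speed is ever required.
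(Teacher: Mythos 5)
Your proposal is correct and follows essentially the same route as the paper: the paper's proof is precisely the one-line observation that eventual almost sure equivalence implies exponential equivalence, so that \cite[Theorem 4.2.13]{dz} applies. Your additional self-contained verification of the upper and lower bounds (including the use of goodness to pass $\inf_{F^\delta}I\to\inf_F I$) is accurate but merely spells out what that citation already contains.
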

\begin{proof}
It is a straightforward consequence of \cite[Theorem 4.2.13]{dz}.
\end{proof}

A first example is given by the averages of a configuration over (translations in) two close sequences of sets. In what follows, $\triangle$ denotes the symmetric difference. 
\begin{lem} \label{lem:reductionvolume} Let $\{V_N\}_N,\{W_N\}_N$ be two sequences of Borel sets in $\Rd$ of bounded Lebesgue measure, such that
\begin{equation*}
\lim_{N \ti} \frac{|W_N \triangle V_N|}{|W_N|} = 0.
\end{equation*}
Let $f$ be a bounded measurable function on $\config$ and let $\Pst$ be in $\probas(\config)$. Then the random variables obtained as the push-forward of $P$ by the (“empirical fields”-like) maps
\begin{equation*}
\mathcal{C} \mapsto \frac{1}{|W_N|} \int_{W_N} \delta_{\theta_{x} \cdot \mathcal{C}} \, dx \text{ and }\, \mathcal{C} \mapsto \frac{1}{|V_N|} \int_{V_N} \delta_{\theta_{x} \cdot \mathcal{C}}  \,  dx
\end{equation*}
are eventually almost surely equivalent in the sense of Definition \ref{defeas}.
\end{lem}
\begin{proof}
For any $\mathcal{C} \in \config$ we have
\begin{equation} \label{reductionvolume}
\frac{1}{|W_N|} \left|\int_{W_N} f(\theta_{x} \cdot \mathcal{C}) dx - \int_{V_N} f(\theta_{x} \cdot \mathcal{C}) dx\right| \leq \frac{|W_N \triangle V_N|}{|W_N|} \|f\|_{\infty}.
\end{equation}
For any $\delta > 0$, to get e.a.s. $\delta$-closeness it suffices to recall that the distance between point processes is defined by testing against functions in $\Lip(\config)$ (which are in particular bounded in sup-norm).
\end{proof} 

The following lemma shows that when considering “empirical fields”-like random variables, obtained by averaging a configuration over translations in some large domain, we may restrict the configuration to this domain. 
\begin{lem} \label{LLDPinduit}
Let $\{\Lambda_N\}_N$ be a sequence of Borel sets of $\Rd$ of finite Lebesgue measure, such that 
\begin{equation} \label{largeportion}
\forall k \in \mathbb{N}, \quad \lim_{N \ti} \frac{1}{|\Lambda_N|} \left|\{ x \in \Lambda_N, d(x, \partial  \Lambda_N) \geq k \}\right| = 0.
\end{equation}
Let $P$ be in $\probas(\config)$ and let us denote by $R_N$, resp. $R_N'$ the push-forward of $P$ by the maps 
\begin{equation*}
\mathcal{C} \mapsto \frac{1}{|\Lambda_N|} \int_{\Lambda_N} \delta_{\theta_x \cdot \mc{C}} dx,  \text{ resp. } \mathcal{C} \mapsto \frac{1}{|\Lambda_N|} \int_{\Lambda_N} \delta_{\theta_x \cdot \left(\mc{C} \cap \Lambda_N \right)}dx.
\end{equation*}
Then the sequences $\{R_N\}_N$ and $\{R'_N\}_N$ are e.a.s.e. in the sense of Definition \ref{defeas}.
\end{lem}
We may observe that \eqref{largeportion} holds in particular when $\Lambda_N$ is taken to be $N^{1/\d} \Lambda$ for some compact $\Lambda$ with piecewise $C^1$ boundary.
\begin{proof}
For any $k \geq 1$, we have
\begin{equation*}
\left(\theta_x \cdot \mathcal{C}\right) \cap \carr_k =  \left(\theta_{x} \cdot (\mathcal{C} \cap \Lambda_N))\right) \cap \carr_k
\text{ for all $x$ such that $d(x, \partial  \Lambda_N) \geq k^{1/{\d}}$.} 
\end{equation*} 
Thus if $f$ is in $\Loc_k(\config)$ (as defined in \eqref{funlocal}) we have
\begin{multline*}
\Esp_{P} \left[ \frac{1}{|\Lambda_N|} \int_{\Lambda_N} f(\theta_x \cdot \mc{C}) dx\right] - \Esp_{P} \left[ \frac{1}{|\Lambda_N|} \int_{\Lambda_N} \delta_{\theta_x \cdot \left(\mc{C} \cap \Lambda_N \right)}dx\right]  \\ \leq \|f\|_{L^{\infty}} \left(1 - \frac{1}{|\Lambda_N|} \left|\{ x \in \Lambda_N, d(x, \partial  \Lambda_N) \geq k^{1/\d} \}\right|\right).
\end{multline*}
Using the assumption \eqref{largeportion} we see that the right-hand side goes to $0$ as $N \ti$, uniformly for $f \in \Loc_k(\config)$ such that $\|f\|_{L^\infty} \leq 2$. 

Combining the uniform approximation of functions in $\Lip(\config)$ by bounded local functions (as in Lemma \ref{Ldistconfig}) and the definition of $d_{\probas(\config)}$ as testing against $\Lip(\config)$, we obtain that for any $\delta > 0$ there exists $k \geq 1$ such that $P$-a.s.
\begin{equation} \label{LDPinduit}
d_{\probas(\config)}(R_N, R_N') \leq \delta + o_{N,k}(1),  
\end{equation}
where $o_{N, k}(1)$ goes to $0$ as $N \ti$ (for fixed $k$). Hence $R_N, R'_N$ are e.a.s. $2\delta$-close,  and this holds for any $\delta > 0$, hence $\{R_N\}_N$ and $\{R'_N\}_N$ are e.a.s.e. in the sense of Definition \ref{defeas}.
 \end{proof}

\subsection{Continuous average: proof of Proposition \ref{SanovbQN}}
Our starting point is the following known large deviation principle for empirical fields.
\begin{prop}[Georgii-Zessin] \label{Sanovconnu}  
Let $\{\Lambda_N\}_N$ be a fixed sequence of cubes increasing to $\Rd$ and let $R_N$ be the push-forward of $\Poisson^1$ by the map
$$
\mathcal{C} \mapsto \frac{1}{|\Lambda_N|} \int_{\Lambda_N} \delta_{\theta_x \cdot \mc{C}} dx.
$$
Then $\{R_N\}_N$ satisfies a large deviation principle at speed $|\Lambda_N|$ with rate function $\ERS[\cdot |\Poisson^1]$.
\end{prop}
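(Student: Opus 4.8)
The statement I must prove is Proposition \ref{Sanovconnu}, the Georgii--Zessin large deviation principle for the empirical field of the homogeneous Poisson process $\Poisson^1$. Since the paper itself attributes this to \cite{georgiz} (and earlier to \cite{varadhansf,follmersf,FollmerOrey,Georgii1}), the honest approach is to \emph{invoke the literature}: state that a complete proof is given in \cite{georgiz}, and in the weaker $\tau$-topology in \cite{seppalainen,Georgii1}, so it suffices here to recall the structure of the argument and explain why the vague topology on $\config$ used in this paper (rather than the $\tau$-topology) makes the present statement a corollary of, or a mild weakening of, the cited results. The one point worth emphasizing is that the topology we put on $\probas(\config)$ is weaker than the $\tau$-topology, so the upper bound transfers immediately by contraction/comparison of topologies, while the lower bound is a statement about open sets and hence only gets easier when the topology is coarsened; thus both halves of the LDP follow from the $\tau$-version.

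First I would recall the two ingredients behind the proof. The superadditivity/subadditivity of $N \mapsto \Ent(\Poisson^1_{|\carr_N}\,|\,\Poisson^1_{|\carr_N})$ composed with the empirical field --- more precisely, the fact that for the Poisson reference the finite-volume relative entropy of the empirical-field law is controlled by the specific entropy $\ERS[\cdot\,|\,\Poisson^1]$ --- gives the large-deviation \emph{upper bound} via a standard exponential Chebyshev/entropy argument (the variational formula $\log \E_{\Poisson^1}[e^{F}] = \sup_P\{\E_P[F] - \Ent(P|\Poisson^1)\}$ applied on $\carr_N$, then divided by $|\Lambda_N|$ and passed to the limit using the tensorization of $\Poisson^1$). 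The \emph{lower bound} is obtained by the classical change-of-measure (tilting) argument: given a stationary $P$ with $\ERS[P|\Poisson^1] < \infty$, one approximates $P$ by a $\carr_N$-periodic (or $\carr_N$-independent) process $P_N$ whose density against $\Poisson^1_{|\carr_N}$ is explicit, shows that under $P_N$ the empirical field concentrates near $P$ (ergodic theorem / law of large numbers for the periodized process), and estimates $\frac{1}{|\Lambda_N|}\Ent(P_N^{\otimes}|\Poisson^1) \to \ERS[P|\Poisson^1]$. Goodness of the rate function $\ERS[\cdot\,|\,\Poisson^1]$ --- lower semicontinuity and compact sub-level sets --- is already recalled in the excerpt (after Definition of $\ERS$, citing \cite{seppalainen}), so exponential tightness plus the local estimates upgrade the weak bounds to the full LDP over arbitrary Borel sets.

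I would then spell out, in one paragraph, the reduction from the $\tau$-topology statement of \cite{georgiz} to the vague-topology statement we need: the identity map $(\probas(\config), \tau\text{-topology}) \to (\probas(\config), \text{weak/vague topology})$ is continuous, so by the contraction principle (\cite[Theorem 4.2.1]{dz}) the push-forward satisfies an LDP with rate function $I(P) = \inf\{\ERS[Q|\Poisson^1] : Q = P\} = \ERS[P|\Poisson^1]$, i.e. the same rate function, since the map is the identity. Hence Proposition \ref{Sanovconnu} holds as stated. The main (and essentially only) obstacle is bookkeeping: making sure that the notion of empirical field in \cite{georgiz} (which uses marks/general single-particle spaces and a possibly different but equivalent normalization of the average over $\Lambda_N$) matches the map $\mathcal{C} \mapsto \frac{1}{|\Lambda_N|}\int_{\Lambda_N} \delta_{\theta_x \cdot \mc{C}}\,dx$ here, and that the class of cubes $\{\Lambda_N\}$ increasing to $\Rd$ satisfies the (van Hove / Følner) regularity hypotheses under which the cited theorem applies --- which it does, since cubes are Følner. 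With those identifications in place the proof is complete by citation; no genuinely new argument is required.
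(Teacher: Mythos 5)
Your proposal takes essentially the same route as the paper: Proposition \ref{Sanovconnu} is justified by citation of \cite[Theorem 3.1]{georgiz} (the paper also notes the alternative of adapting \cite[Chapter 6]{seppalainen} via the G\"artner--Ellis theorem), and your observation that an LDP in the stronger $\tau$-topology transfers to the weak topology used here by the contraction principle applied to the identity map is correct and consistent with the paper's own remarks on the choice of topology. The one concrete point you dismiss as ``bookkeeping'' is precisely the one the paper singles out: in \cite{georgiz} the empirical field is built from the $\Lambda_N$-\emph{periodized} configuration rather than from $\theta_x\cdot\mc{C}$ itself, and this is not a matter of normalization. Passing from the periodized to the non-periodized empirical field requires a (short but genuine) argument; the paper invokes \cite[Remark 2.4]{georgiz} for exactly this purpose, and alternatively one could verify exponential or eventual-almost-sure equivalence of the two sequences in the spirit of Lemma \ref{transfertLDP} and Lemma \ref{LLDPinduit}. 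With that identification made explicit, your proof-by-citation coincides with the paper's.
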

\begin{proof}
It is a consequence of \cite[Theorem 3.1]{georgiz} together with \cite[Remark 2.4]{georgiz} to get rid of the periodization used in their definition of $R_N$ (see also \cite{FollmerOrey}). One could also use the method of \cite[Chapter 6]{seppalainen}, where the G\"{a}rtner-Ellis theorem is used through proving the existence of a pressure and studying its Legendre-Fenchel transform, by adapting the proof from the discrete case (point processes on $\Z^d$) to the case of point processes on $\Rd$. 
\end{proof}

\subsubsection{Extension to general shapes} 
In this first step we extend the LDP of Proposition \ref{Sanovconnu} to more general shapes of $\{\Lambda_N\}_N$. 
\begin{lem}\label{Sanovdomaine} Let $\Lambda$ be a compact set of $\Rd$ with a non-empty interior and a Lipschitz boundary, and let $\Lambda_N := N^{1/{\d}} \Lambda$. Let $R_N$ be the push-forward of $\Poisson^1$ by the map
\begin{equation*}
\mathcal{C} \mapsto \frac{1}{|\Lambda_N|} \int_{\Lambda_N} \delta_{\theta_x \cdot \mc{C}}\,  dx.
\end{equation*}
Then $\{R_N\}_N$ satisfies a large deviation principle at speed $N |\Lambda|$ with rate function $\ERS[\cdot |\Poisson^1]$.
\end{lem}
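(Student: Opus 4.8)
The plan is to deduce Lemma \ref{Sanovdomaine} from Proposition \ref{Sanovconnu} by a sandwiching argument: approximate the compact Lipschitz domain $\Lambda$ from inside and outside by finite disjoint unions of cubes, for which the LDP is known, and then pass to the limit. First I would fix $\epsilon > 0$ and, using the Lipschitz (hence measure-theoretically nice) boundary of $\Lambda$, choose a finite disjoint union of dyadic cubes $Q^{-}_\epsilon \subset \Lambda \subset Q^{+}_\epsilon$ with $|Q^{+}_\epsilon \setminus Q^{-}_\epsilon| \le \epsilon |\Lambda|$. Rescaling by $N^{1/d}$, the set $\Lambda_N = N^{1/d}\Lambda$ is squeezed between $N^{1/d} Q^{-}_\epsilon$ and $N^{1/d} Q^{+}_\epsilon$, and the symmetric difference has volume $o(|\Lambda_N|)$ up to a factor $\epsilon$. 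Remark \ref{Lreductionvolume} then shows that the random variable $R_N$ built from $\Lambda_N$ and the one built from, say, $N^{1/d}Q^{-}_\epsilon$ differ by at most $C\epsilon$ in the Dudley distance, $\Poisson^1$-almost surely — i.e. they are e.a.s. $C\epsilon$-close in the sense of Definition \ref{defeas}.

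Next I would establish the LDP for the empirical field averaged over a \emph{finite disjoint union of cubes} of comparable (large) sidelength. This is a mild generalization of Proposition \ref{Sanovconnu}: writing the average over the union as a convex combination of the averages over the individual cubes (weighted by their volumes), and using that the restrictions of $\Poisson^1$ to disjoint cubes are independent, one gets the same large deviation upper and lower bounds with rate function $\ERS[\cdot|\Poisson^1]$. Concretely, the upper bound follows from subadditivity of $\log$-probabilities and the lower bound from the standard change-of-measure (tilting) argument exactly as in the single-cube case; alternatively one invokes the abstract result of \cite{georgiz} directly for these shapes, since they also have Lipschitz boundary — but to avoid circularity it is cleaner to derive it from the cube case by the convex-combination/independence observation. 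Letting the common sidelength of the cubes in $Q^{\pm}_\epsilon$ grow, one obtains the LDP at speed $N|Q^{\pm}_\epsilon|$ with rate function $\ERS[\cdot|\Poisson^1]$.

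Then I would run the sandwiching. For a closed set $A$, monotonicity of the averaging maps and the volume comparison give $R_N(\Lambda_N) \in A$ implies $R_N(N^{1/d}Q^{-}_\epsilon)$ lies in a $C\epsilon$-neighborhood $A^{C\epsilon}$ of $A$ (up to an e.a.s.-null event), whence
$$
\limsup_{N \to \infty} \frac{1}{N|\Lambda|} \log \Poisson^1\big(R_N(\Lambda_N) \in A\big) \le - \frac{|Q^{-}_\epsilon|}{|\Lambda|} \inf_{A^{C\epsilon}} \ERS[\cdot|\Poisson^1];
$$
letting $\epsilon \to 0$, using $|Q^{-}_\epsilon|/|\Lambda| \to 1$, the lower semicontinuity of $\ERS$ and the fact that $A^{C\epsilon} \downarrow \bar{A} = A$, yields the upper bound with constant $1$. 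Symmetrically, for an open set $A$ one uses $Q^{+}_\epsilon$ and the lower bound of the cube-union LDP together with the e.a.s.-closeness to get the matching liminf inequality. More efficiently, Lemma \ref{transfertLDP} (via exponential equivalence) lets me transfer the LDP directly once I observe that $R_N(\Lambda_N)$ is eventually almost surely equivalent — after first sending the sidelength to infinity and then $\epsilon$ to $0$ in a diagonal fashion — to the cube-union process; the only care needed is that the speeds $N|\Lambda|$ and $N|Q^{\pm}_\epsilon|$ agree in the limit, which they do since $|Q^{\pm}_\epsilon| \to |\Lambda|$.

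\textbf{Main obstacle.} The genuinely delicate point is the interplay of the two limits $\epsilon \to 0$ (refining the polyhedral approximation of $\Lambda$) and the cube sidelength $\to \infty$ (needed for the cube-union LDP to hold with the correct rate function): the e.a.s.-closeness constant from Remark \ref{Lreductionvolume} is $C\epsilon$ regardless of sidelength, but the error terms $o(|\Lambda_N|)$ in passing from $\Lambda_N$ to the rescaled cube-union hide a dependence on how many cubes are used, which grows as $\epsilon \to 0$. One must therefore order the limits carefully — fix the approximation $Q^{\pm}_\epsilon$ first (finitely many cubes), take $N \to \infty$, then $\epsilon \to 0$ — and check that the rate-function side behaves well under this order, which is where lower semicontinuity and the goodness of $\ERS[\cdot|\Poisson^1]$ (its compact sub-level sets, recalled after \eqref{defERS}) do the work. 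Everything else is bookkeeping with the Dudley distance and the elementary inequality \eqref{reductionvolume}.
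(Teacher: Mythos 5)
Your plan is correct in outline but takes a genuinely different route from the paper. You approximate $\Lambda$ from inside and outside by finite unions of equal cubes, prove an LDP for the empirical field averaged over a disjoint union of blown-up cubes, and then sandwich, sending $\epsilon \to 0$ only after $N \to \infty$ and using goodness and lower semi-continuity of $\ERS[\cdot|\Poisson^1]$ to recover the constant $1$ in the speed and the infimum over the original set; the ordering of limits you insist on is indeed the right one. The paper proceeds differently: it tiles all but a negligible fraction of $\Lambda$ by equal small hypercubes and then applies a measure-preserving, piecewise-translation rearrangement $\Phi_N$ sending the union of tiles onto a single large hypercube. Since $\Poisson^1$ is invariant under such a rearrangement, Proposition \ref{Sanovconnu} applies directly to the rearranged empirical field, and one transfers back through eventual almost-sure equivalence (Remark \ref{Lreductionvolume}, Lemma \ref{transfertLDP}), the only extra work being that translates based near the internal tile boundaries see different configurations before and after $\Phi_N$, which is a negligible fraction of translates (arguing as in Lemma \ref{LLDPinduit}). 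The paper's trick buys that no ``union-of-cubes'' LDP is ever needed and no $\epsilon \to 0$ limit appears: a single invocation of the cube result suffices. Your route buys that no rearrangement map and no internal-boundary bookkeeping are needed, at the price of one extra lemma.

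That extra lemma is the one place where your sketch is thinner than it should be. The lower bound for the average over $M$ independent equal blocks is indeed immediate (force every block-field to be $\delta$-close to the target; no tilting is needed). The upper bound, however, does not follow from ``subadditivity of log-probabilities'': the event that the average of the $M$ block-fields lies near $P$ does not force each block-field near $P$. The correct argument uses exponential tightness of the block-fields, a finite covering of a compact set by small balls, independence to multiply block probabilities, and then the affinity (at least convexity) of $\ERS[\cdot|\Poisson^1]$, stated after \eqref{defERS}, to bound $\frac{1}{M}\sum_i \ERS[P_i|\Poisson^1]$ from below by $\ERS\big[\frac{1}{M}\sum_i P_i\big|\Poisson^1\big]$, followed by lower semi-continuity as the ball radii shrink. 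With that step made explicit, your sandwich closes as you describe, including the harmless $|Q^{\pm}_\epsilon|/|\Lambda| \to 1$ correction of the speed.
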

\begin{proof} 
The idea is to tile $\Lambda_N$ into large hypercubes (up to some boundary part whose volume will be negligible) and to apply  Proposition \ref{Sanovconnu} on each hypercube, together with Lemma \ref{LLDPinduit}.

For any $r > 0$ let us consider the hypercubes centered at the points of $\Lambda \cap \frac{1}{r} \Z^\d$ and of sidelength $\frac{1}{r}$. We remove those that are centered at points in 
\begin{equation*}
A_r:=\left\lbrace x \in \Lambda \cap \frac{1}{r} \Z^\d, \dist(x, \partial  \Lambda) \leq 2 C r^{-1/{\d}}\right\rbrace,
\end{equation*}
 where $C$ is the distance between the center of the unit hypercube in dimension $\d$ and any vertex of this hypercube.
Since the boundary of $\Lambda$ is Lipschitz, we have 
 \begin{equation} \label{volumelostAn}
 \lim_{r \ti} |A_r| = 0.
 \end{equation}
 
 Now, let $N \geq 1$. From \eqref{volumelostAn} we see that for $r$ large enough (depending on $N$), the volume lost when removing the boundary hypercubes (with center in $A_r$) is less than $2^{-N} |\Lambda|$. We may thus construct a family of $m = m(N)$ hypercubes $\{\Lambda^{(i, N)}\}_{i=1}^{m(N)}$ included in $\Lambda$ and such that 
\begin{equation} \label{volumelostAn2}
\frac{1}{|\Lambda|} |\Lambda \triangle \left(\cup_{i=1}^m \Lambda^{(i,N)}\right)| \leq 2^{-N}.
\end{equation}
Let us also observe that we may also choose $r$ arbitrarily large such that the following technical point is satisfied
\begin{equation} \label{technicality}
\sum_{i=1}^{m} |\Lambda^{(i,N)}| \in r \N^{\d},
\end{equation}
as follows e.g. from an elementary argument based on the intermediate value theorem.

Next, we define $\tilde{\Lambda}^{(N)}$ as the hypercube of center $0$ and such that
\begin{equation}
\label{goodtiling}
|\tilde{\Lambda}^{(N)}| = \sum_{i=1}^{m} |\Lambda^{(i,N)}|.
\end{equation}
Since \eqref{technicality} holds, we may split $\tilde{\Lambda}^{(N)}$ into hypercubes of sidelength $r$, and there exists a measurable bijection $\Phi_{N} : \cup_{i=1}^{m} \Lambda^{(i,N)} \rightarrow \tilde{\Lambda}^{(N)}$ which is a translation on each hypercube $\Lambda^{(i,N)}$ ($i=1, \dots, m$). 

As defined above, $R_N$ is the push-forward of $\Poisson^1$ by the map
\begin{equation*}
\mathcal{C} \mapsto \frac{1}{|\Lambda_N|} \int_{\Lambda_N} \delta_{\theta_x \cdot \mc{C}}\, dx.
\end{equation*}
We also introduce $R'_N$ as the push-forward of $\Poisson^1$ by the map 
\begin{equation*}
\mathcal{C} \mapsto \frac{1}{ Nm |\Lambda^{(1,N)}|} \int_{\cup_{i=1}^{m} N^{1/{\d}} \Lambda^{(i,N)}} \delta_{\theta_x \cdot \mc{C}} \, dx.
\end{equation*}
Finally, from any configuration of points $\mathcal{C}$ in $\cup_{i=1}^{m} N^{1/{\d}} \Lambda^{(i,N)}$ we get by applying $x \mapsto N^{1/{\d}} \Phi_{N}(N^{-1/{\d}}(x))$ a configuration in $N^{1/{\d}} \tilde{\Lambda}^{(N)}$, which by abusing notation we denote again by $\Phi_{N}(\mathcal{C})$. We denote by $R''_N$ the push-forward of $\Poisson^1_{|\Lambda_N}$ by:
$$
\mathcal{C} \mapsto \frac{1}{N|\tilde{\Lambda}^{(N)}|} \int_{\Lambda_N} \delta_{\theta_{\Phi_{N}(x)} \cdot \Phi_{N}(\mathcal{C})} dx.
$$
We impose that the random variables $R_N, R'_N, R''_N$ are coupled together the natural way.

It is easily seen that the push-forward of $\Poisson^1_{|\Lambda_N}$ -- or more precisely  of the process induced on the subset $\cup_{i=1}^{m} N^{1/{\d}} \Lambda^{(i,N)}$ -- by the map $\mc{C} \mapsto \Phi_{N}(\mc{C})$ is equal in law to $\Poisson^1_{N^{1/{\d}}\tilde{\Lambda}^{(N)}}$. The sequence of hypercubes $\{N^{1/{\d}}\tilde{\Lambda}^{(N)}\}_N$ satisfies the hypothesis of Proposition \ref{Sanovconnu} hence a LDP holds at speed $|\Lambda| N$ for the sequence $\{R''_N\}_N$ (the fact that we consider the push-forward of $\Poisson^1_{|\Lambda_N}$ instead of that of $\Poisson^1$ is irrelevant thanks to Lemma \ref{LLDPinduit}). To show that the same principle holds for $\{R_N\}_N$ it is enough to show that the two sequences are e.a.s. equivalent in the sense of Definition \ref{defeas}.

Let us first observe that the sequences $\{R_N\}_N$ and $\{R'_N\}$ are e.a.s.e. because as a consequence of \eqref{goodtiling} the tiling of $\Lambda_N$ by the hypercubes $\cup_{i=1}^{m} N^{1/{\d}} \Lambda^{(i,N)}$ only misses a $o(1)$ fraction of the volume of $\Lambda_N$ and e.a.s. equivalence is then a consequence of Lemma  \ref{lem:reductionvolume}.

As for the pair of sequences $\{R'_N\}_N$ and $\{R''_N\}_N$, let us observe that for any $k \geq 1$ we have
 $$
\left( \theta_x \cdot \mathcal{C} \right) \cap \carr_k = \left( \theta_{\Phi_{N}(x)} \cdot \Phi_{N}(\mathcal{C}) \right) \cap \carr_k
 $$
 for any $x$ in one of the tiling hypercubes $\cup_{i=1}^{m} N^{1/{\d}} \Lambda^{(i,N)}$ except for the points that are near the boundary of their hypercube - those such that 
 $$\dist \left(x, \cup_{i=1}^{m} \partial  N^{1/{\d}} \Lambda^{(i,N)} \right) \leq |\carr_k|^{1/{\d}}.$$
For any $k$ the fraction of volume of points in the hypercube that are close to the boundary in the previous sense is negligible as $N \ti$. Arguing as in the proof of Lemma \ref{LLDPinduit} gives the result.
\end{proof}

\subsubsection{Tagged point processes}\label{sectagged}
We now recast the result of Lemma \ref{Sanovdomaine} in the context of tagged point processes (as defined in Section \ref{sec-ppp}) which necessitates to replace the specific relative entropy $\ERS$ by its analogue with tags.
\begin{lem} \label{SanovStep1} Let $\Lambda$ be a compact set of $\Rd$ with $C^1$ boundary and non-empty interior and let $\bar{R}_N$ be the push-forward of $\Poisson^1$ by the map
\begin{equation*}
\mathcal{C} \mapsto \frac{1}{|\Lambda|} \int_{\Lambda} \delta_{(x,\theta_{N^{1/{\d}} x} \cdot \mc{C})}\, dx.
\end{equation*}
Then $\{\bar{R}_N\}_N$ satisfies a large deviation principle at speed $N$ with rate function $\bERS[\bP|\Poisson^1]$.\end{lem}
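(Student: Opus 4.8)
The plan is to deduce this tagged LDP from the untagged version established in Lemma \ref{Sanovdomaine} by a combination of the contraction-type argument and a careful ``glueing over a partition of $\Lambda$'' procedure, in the same spirit as the passage from Lemma \ref{Sanovdomaine} to the present statement in the Poisson case. Concretely, I would first observe that the map $\mc{C} \mapsto \bar{R}_N(\mc{C})$ can be understood as recording, simultaneously for each $x\in\Lambda$, the translate $\theta_{N^{1/d}x}\cdot\mc{C}$, tagged by $x$. Since the first marginal of $\bar{R}_N$ is (exactly) the normalized Lebesgue measure on $\Lambda$, the candidate rate function only sees the disintegration slices $\bPst^x$, and the natural guess for the rate function is $\int_\Lambda \ERS[\bPst^x|\Poisson^1]\,dx$.

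\textbf{Key steps.} First I would fix a small mesh size and tile $\Lambda$ (up to a boundary layer of negligible volume, exactly as in the proof of Lemma \ref{Sanovdomaine}, using that $\partial\Lambda$ is $C^1$) by finitely many small hypercubes $\Lambda^{(1)},\dots,\Lambda^{(p)}$ of equal volume. On each such cube the tag $x$ varies over a set of small diameter, so the tagged empirical field restricted to $\Lambda^{(j)}\times\config$ is, up to an error controlled by the mesh size, the same as the untagged empirical field of $\mc{C}$ averaged over translations in $N^{1/d}\Lambda^{(j)}$, carrying the constant tag equal to (say) the center of $\Lambda^{(j)}$. For a finite collection of such cubes, the joint law of the untagged empirical fields $(R_N^{(1)},\dots,R_N^{(p)})$ under $\Poisson^1$ satisfies an LDP at speed $N$: indeed the cubes $N^{1/d}\Lambda^{(j)}$ are disjoint and, since $\Poisson^1$ has independent increments over disjoint regions, the $R_N^{(j)}$ are (asymptotically) independent, so by Lemma \ref{Sanovdomaine} and the independence-LDP (a special case of the contraction principle / combination of LDPs, cf. \cite[Exercise 4.2.7]{dz}) the joint rate function is $\sum_{j=1}^p \ERS[\cdot^{(j)}|\Poisson^1]$, with the same goodness. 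Pushing this forward by the continuous map $(P^{(1)},\dots,P^{(p)})\mapsto \frac{1}{p}\sum_j \delta_{(x_j,\cdot^{(j)})}\cdot(\text{interpolated over }\Lambda^{(j)})$ that reassembles a tagged point process, and using the contraction principle, gives an LDP for the ``discretized-tag'' empirical field with rate function the restriction of $\bPst\mapsto\sum_j \ERS[\bPst^{x_j}|\Poisson^1]\,|\Lambda^{(j)}|$ to tagged processes constant on each cube. Then I would let the mesh size tend to $0$: on one hand, by Remark \ref{Lreductionvolume} and the analogue of Lemma \ref{LLDPinduit}, the discretized-tag empirical field and the true $\bar{R}_N$ are eventually almost surely equivalent (the error between averaging the tag continuously and stepwise over a cube of small diameter is uniformly small), so by Lemma \ref{transfertLDP} the LDP transfers; on the other hand, the Riemann sums $\sum_j \ERS[\bPst^{x_j}|\Poisson^1]\,|\Lambda^{(j)}|$ converge to $\int_\Lambda \ERS[\bPst^x|\Poisson^1]\,dx$, and because $\ERS[\cdot|\Poisson^1]$ is lower semi-continuous with compact sub-level sets the liminf/limsup inequalities pass to the limit (a standard $\Gamma$-convergence-type argument on rate functions, using that coarser partitions give smaller functionals by affineness of $\ERS$ in its first argument, cf. \cite{seppalainen}). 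Goodness of the limiting rate function follows from goodness of $\ERS[\cdot|\Poisson^1]$ together with Fatou's lemma and the compactness of the tag space $\Lambda$, exactly as quoted after \eqref{defERS}.

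\textbf{Main obstacle.} The delicate point is the interchange of the two limits (in $N$ and in the mesh size) and more precisely the uniformity needed to conclude: I must ensure that the error between the discretized-tag field and $\bar{R}_N$ is controlled uniformly in $\mc{C}$ by the mesh diameter (so that exponential/e.a.s. equivalence holds independently of the coarseness of the partition), and simultaneously that the combinatorial/independence structure of $\Poisson^1$ over the cubes does not degrade as the number $p$ of cubes grows with $N$. The cleanest way around this is the standard one: prove the upper bound using an arbitrarily fine but $N$-independent partition (so the number of cubes stays fixed while $N\to\infty$), prove the lower bound around a fixed $\bPst$ by approximating $\bPst$ by tagged processes constant on the cells of a fine partition (using density of such ``piecewise-constant-in-tag'' processes, which is where the $C^1$ regularity of $\partial\Lambda$ and the mild continuity properties of $\ERS$ enter), and then invoke exponential tightness (which is immediate here since $\Poisson^1$ restricted to $\Lambda_N$ produces at most finitely many points and one controls $\Nn(0,R)$ as in the proof of Lemma \ref{expotight}) to upgrade the weak formulation to a genuine LDP over all Borel subsets of $\probas(\Lambda\times\config)$.
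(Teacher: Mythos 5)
Your overall plan is sound, and your lower bound is essentially the paper's: partition $\Lambda$ into finitely many cells with negligible boundary layer, use the independence of $\Poisson^1$ over disjoint regions together with the untagged LDP of Lemma \ref{Sanovdomaine} on each cell, and use the affineness of $\ERS[\cdot|\Poisson^1]$ to convert cellwise entropies of averaged slices into $\int_\Lambda \ERS[\bPst^x|\Poisson^1]\,dx$ (the paper implements this via the ball inclusion \eqref{boulecontenue}, obtained by Stone--Weierstrass plus step functions in the tag, rather than by approximating $\bPst$ itself by tag-piecewise-constant processes, but these are two phrasings of the same idea). Where you genuinely diverge is the upper bound: the paper does not discretize at all there, it simply pushes $\bar{R}_N$ forward by the continuous ``forgetful'' map $(x,\mc{C})\mapsto\mc{C}$, applies the untagged LDP upper bound of Lemma \ref{Sanovdomaine} on all of $\Lambda$, and then uses affineness in the form $\ERS[\varphi(\bPst)|\Poisson^1]=\frac{1}{|\Lambda|}\int_\Lambda\ERS[\bPst^x|\Poisson^1]\,dx$. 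This is both shorter and free of the continuity issues your route must face (restricting tags to a cell is an indicator-type operation, continuous here only because the first marginal is Lebesgue and cell boundaries are null, via a portmanteau argument you would need to spell out). Your symmetric discretization of both bounds works, but it buys nothing over the contraction argument for the upper bound; its only advantage is that it treats both bounds with one mechanism.

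Two points in your write-up must be tightened. First, the cellwise empirical fields $R_N^{(j)}$ computed from the full configuration are \emph{not} independent (each translate sees points in neighbouring cells); exact independence requires replacing $\mc{C}$ by $\mc{C}\cap N^{1/d}\Lambda^{(j)}$, and the replacement is legitimate, for a fixed mesh as $N\ti$, by the boundary-layer argument of Lemma \ref{LLDPinduit} --- this is precisely step (4) of the paper's lower-bound approximation, so ``(asymptotically) independent'' should be made precise in exactly this way rather than invoked loosely. Second, your phrase ``coarser partitions give smaller functionals by affineness'' is off: affineness gives \emph{equality} of $\ERS$ of the cell average with the cell average of $\ERS$, and this equality (not an inequality, which is all convexity would give) is what makes the discretized upper bound tight and the piecewise-constant approximation of $\bPst$ lossless in the lower bound. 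With these two corrections, and your proposed order of limits (fixed partition, $N\ti$, then mesh $\to0$, plus exponential tightness to upgrade the weak LDP), the argument goes through.
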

Let us recall that the quantity $\bERS[\bP | \Poisson^1]$ has been defined in \eqref{def:bERS}.
\begin{proof}
\textbf{Upper bound.} Let $\bPst$ be a stationary tagged point process. We claim that
\begin{equation}\label{MarkUB}
\limsup_{\epsilon \t0} \limsup_{N\ti} \frac{1}{N} \log \bR_N \left( B(\bPst, \epsilon) \right) \leq - \int_{\Lambda} \ERS[\bPst^x | \Poisson^1]dx.
\end{equation}
Let us observe that the \textit{forgetful map} $\varphi : \probas(\Lambda \times \config) \rightarrow \probas(\config)$ obtained by pushing forward with the map $(x, \mc{C}) \mapsto \mc{C}$ is continuous. This yields 
\begin{equation} 
\label{compoubli}
\limsup_{\epsilon \t0} \limsup_{N\ti} \frac{1}{N} \log \bR_N \left( B(\bPst, \epsilon) \right) \leq \limsup_{\epsilon \t0} \limsup_{N\ti} \frac{1}{N} \log \varphi \sharp \bR_N \left( B(\varphi(\bPst), \epsilon)\right),
\end{equation}
where $\varphi \sharp \bR_N $ is the push-forward of $\bR_N$ by $\varphi$. By definition, we may observe that this coincides with the push-forward of $\Poisson^1$ by the map
\begin{equation*}
\mathcal{C} \mapsto \frac{1}{N|\Lambda|} \int_{\Lambda_N} \delta_{\theta_x \cdot \mc{C}} \, dx.
\end{equation*}
 From Lemma \ref{Sanovdomaine} we know that an LDP holds for $R_N$ at speed $|\Lambda| N$ with rate function $\ERS[\cdot|\Poisson^1]$ (or equivalently at speed $N$ with rate function $|\Lambda| \ERS[\cdot | \Poisson^1]$) hence the right-hand side of \eqref{compoubli} is bounded by $|\Lambda|\ERS[\varphi(\bPst)|\Poisson^1]$. 
 
Now, since the relative specific entropy is affine, we have
\begin{equation*}
\ERS[\varphi(\bPst)|\Poisson^1] = \frac{1}{|\Lambda|} \int_{\Lambda} \ERS[\bPst^x|\Poisson^1] \, dx
\end{equation*}
 which yields the LDP upper bound.

\textbf{Lower bound.} Let $\bPst$ be a tagged point process. We want to prove that
\begin{equation}\label{MarkLB}
\liminf_{\epsilon \t0} \liminf_{N\ti} \frac{1}{N} \log \bR_N \left( B(\bPst, \epsilon)\right) \geq -  \int_{\Lambda} \ERS[\bPst^x | \Poisson^1] dx.
\end{equation}

Let $\epsilon > 0$. By a standard approximation argument, we see that there exists $M \geq 1$ and $k > 0$ (both depending only on $\epsilon$) such that for any function $F \in \Lip(\Lambda \times \config)$, there exists 
\begin{itemize}
\item a covering of $\Lambda$ by compact sets\footnote{This covering can actually be chosen independently of $F$ in $\Lip(\Lambda \times \config)$.} $A_1, \dots, A_M \subset \Lambda$ of pairwise disjoint non-empty interiors  such that each set $A_i$ has a Lipschitz boundary,
\item for any $i \in \{1, \dots, M\}$, a point $x_i$ in $A_i$,
\item a family $\{F_i\}_{i =1, \dots, M}$ of functions in $\Lip(\config) \cap \Loc_k(\config)$,
\end{itemize}
such that
\begin{equation*}
\|F(x,\C) - \sum_{i=1}^M \mathbf{1}_{A_i}(x) F_i(\C) \|_{L^{\infty}(\Lambda \times \config)} \leq \epsilon.
\end{equation*}
The fact that the $F_i$'s can be taken in $\Loc_k$ (with $k$ depending only on $\epsilon$) follows from the approximation result of Lemma \ref{Ldistconfig} (in fact we can take $F_i = F(x_i, \cdot \cap \carr_k)$ for $k$ large enough).

For any $\bar{Q}$ in $\probas(\Lambda \times \config)$, and $F \in \Lip(\Lambda \times \config)$ we now have
\begin{equation*}
\left| \int F(x, \C) d\bar{Q}(x,\C) - \sum_{i=1}^N \int_{x \in A_i} F_i(\C) d\bar{Q}(x,\C) \right| \leq C \epsilon, 
\end{equation*}
for some $C$ independent on $\epsilon, N$. 


For each $i$ we define $P_i \in \probas(\config)$ as
\begin{equation*}
 P_i := \frac{1}{|A_i|} \int_{x \in A_i} \delta_{\C}\, d\bP(x, \C).
 \end{equation*}
 We may observe that
\begin{equation*}
\ERS[P_i | \Poisson^1] = \frac{1}{|A_i|} \int_{x \in A_i} \ERS[\bP^{x} | \Poisson^1].
\end{equation*}
Applying the lower bound of Lemma \ref{Sanovdomaine} on $B(P_i, \epsilon)$, we see that
$$
\lim_{N \ti} \frac{1}{N} \log R^{(i)}_N(B(P_i(\epsilon)) \geq - \frac{1}{|A_i|} \int_{x \in A_i} \ERS[\bP^{x} | \Poisson^1], 
$$
where $R^{(i)}_N$ is the empirical field of $\Poisson^1$ with average on $A_i$. We can paste such empirical fields together to obtain an empirical field with average on $\Lambda$ which is close to $\bP$, up to a negligible error (arguing as in Lemma \ref{lem:reductionvolume}, Lemma \ref{LLDPinduit}). Combining the LDP lower bounds we see that 
\begin{equation*}
\lim_{N \ti} \log \bR_N \left( B(\bPst, \epsilon)\right) \geq  - \sum_{i=1}^M |A_i| \ERS[P_i | \Poisson^1] + o_{\epsilon}(1).
\end{equation*}
Since $ - \sum_{i=1}^M |A_i| \ERS[P_i | \Poisson^1] = \int_{x \in \Lambda} \ERS[\bP^x|\Poisson^1] dx$ coincides with the definition of $\bERS[\bP|\Poisson^1]$, the lower bound follows by taking $\epsilon \to 0$.

\textbf{Conclusion.}
From \eqref{MarkUB} and \eqref{MarkLB} we get a weak LDP for the sequence $\{\bR_N\}_N$. The full LDP is obtained by observing that $\{\bR_N\}_N$ is exponentially tight, a fact for which we only sketch the (elementary) proof : for any integer $M$ we may find an integer $T(M)$ large enough such that a point process has less than $T(M)$ points in $\carr_M$ expect for a fraction $\leq \frac{1}{M}$ of the configurations, with $\bR_N$-probability bounded below (when $N \ti$) by $1 - e^{-NM}$. The union (on $N \geq N_0$ large enough) of such events has a large $\bR_N$-probability (bounded below by $1 - e^{-NM}$ when $N \ti$) and is easily seen to be compact.
\end{proof}

\subsubsection{From Poisson to Bernoulli}\label{secstep3}
In what follows, when $\Lambda$ is fixed, for $N \geq 1$ we let $\B_N$ be the Bernoulli point process with $N$ points in $N^{1/{\d}} \Lambda$ (in other words, it is the law of $N$ points chosen uniformly and independently in $N^{1/{\d}} \Lambda$).

If we replace $\Poisson^1$ by $\B_N$ as the reference measure i.e. if we constrain $\Poisson^1$ to have a fixed number of points in $N^{1/{\d}} \Lambda$ then the LDP for (tagged) empirical fields is modified: the large deviation upper bound holds but the large deviation lower bound needs a technical adaption. Let us recall that we denote by $\probas_{s,1}(\Lambda \times \config)$ the set of stationary tagged point processes such that the integral on $x \in \Lambda$ of the intensity of the disintegration measure $\bPst^x$ is $1$.

\begin{lem} \label{SanovBernou} 
Let $\Lambda$ be a compact set of $\Rd$ with $C^1$ boundary and non-empty interior and let $\bS_N$ be the push-forward of $\B_N$ by the map
\begin{equation*}
\mathcal{C} \mapsto \frac{1}{N|\Lambda|} \int_{N^{1/{\d}}\Lambda} \delta_{(N^{-1/{\d}}x,\theta_{x} \cdot \mc{C})} dx.
\end{equation*}
Then for any $A \subset \probas_s(\Lambda \times \config)$ we have:
\begin{multline} \label{EqSB5}
- \inf_{\bPst \in \mathring{A} \cap \probas_{s,1}(\Lambda \times \config)} \bERS[\bP | \Poisson^1]  - (\log |\Lambda| - |\Lambda| + 1) \leq \liminf_{N \ti} \frac{1}{N} \log \bS_N (A) \\ \leq  \limsup_{N \ti} \frac{1}{N} \log \bS_N (A) \leq - \inf_{\bPst \in \bar{A}} \bERS[\bP | \Poisson^1] - (\log |\Lambda| - |\Lambda| + 1).
\end{multline}
\end{lem}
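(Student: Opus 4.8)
The plan is to realise the Bernoulli process $\B_N$ as the Poisson process $\Poisson^1$ restricted to $N^{1/d}\Lambda$ and \emph{conditioned} to have exactly $N$ points there, and to push the large deviation principle of Lemma \ref{SanovStep1} through this conditioning. Write $p_{N,M}:=e^{-N|\Lambda|}(N|\Lambda|)^M/M!$ for the probability that $\mc{C}\cap N^{1/d}\Lambda$ has $M$ points under $\Poisson^1$, and let $\bR_N$ be the push-forward of $\Poisson^1$ restricted to $N^{1/d}\Lambda$ by $\mc{C}\mapsto\frac1{N|\Lambda|}\int_{N^{1/d}\Lambda}\delta_{(N^{-1/d}x,\theta_x\cdot\mc{C})}\,dx$; by Lemma \ref{LLDPinduit} together with Lemma \ref{transfertLDP}, $\{\bR_N\}_N$ still obeys the LDP of Lemma \ref{SanovStep1}, and by construction $\bS_N$ is the law of $\bR_N$ conditioned on $\{\mc{C}\cap N^{1/d}\Lambda\ \text{has}\ N\ \text{points}\}$, so that $\bS_N(A)=p_{N,N}^{-1}\,\Poisson^1\big(\bR_N\in A,\ \#(\mc{C}\cap N^{1/d}\Lambda)=N\big)$. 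Stirling's formula gives $\frac1N\log p_{N,N}\to \log|\Lambda|-|\Lambda|+1$, which explains the additive constant in \eqref{EqSB5}, and the upper bound of \eqref{EqSB5} then follows at once, by bounding the numerator by $\Poisson^1(\bR_N\in\bar A)$ and invoking the LDP upper bound of Lemma \ref{SanovStep1}.

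For the lower bound, fix $\bPst\in\mathring A\cap\probas_{s,1}(\Lambda\times\config)$ and $\epsilon>0$ with $B(\bPst,\epsilon)\subset\mathring A$, and decompose over the number of points: $\Poisson^1(\bR_N\in B(\bPst,\epsilon))=\sum_{M\ge0} p_{N,M}\,\B_{M,N}\big(\bR_N^{(M)}\in B(\bPst,\epsilon)\big)$, where $\bR_N^{(M)}$ is the analogous empirical field built from an $M$-point configuration. The key point is that, because $\bPst$ is stationary of total intensity $1$, testing $\bR_N^{(M)}$ against bounded $1$-Lipschitz local functions approximating truncated point counts forces $|M-N|\le\epsilon'N$ (for $N$ large) whenever $\bR_N^{(M)}\in B(\bPst,\epsilon)$, with $\epsilon'=\epsilon'(\epsilon)\to0$ as $\epsilon\to0$; hence for large $N$ the sum ranges over at most $2\epsilon'N+1$ values of $M$. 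There is therefore some $M^\ast=M^\ast(N)$ with $|M^\ast-N|\le\epsilon'N$ such that
\[
p_{N,M^\ast}\,\B_{M^\ast,N}\big(\bR_N^{(M^\ast)}\in B(\bPst,\epsilon)\big)\ \ge\ \frac{\Poisson^1(\bR_N\in B(\bPst,\epsilon))}{2\epsilon'N+1}.
\]
Since $|M^\ast/N-1|\le\epsilon'$, Stirling gives $\frac1N\log p_{N,M^\ast}=\log|\Lambda|-|\Lambda|+1+o_\epsilon(1)+o_N(1)$, so the LDP lower bound of Lemma \ref{SanovStep1} yields $\liminf_N\frac1N\log\B_{M^\ast,N}(\bR_N^{(M^\ast)}\in B(\bPst,\epsilon))\ge -\int_\Lambda\ERS[\bPst^x|\Poisson^1]\,dx-(\log|\Lambda|-|\Lambda|+1)-o_\epsilon(1)$.

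Next one passes from $\B_{M^\ast,N}$ to $\B_N=\B_{N,N}$ at no exponential cost: if $M^\ast>N$, a configuration sampled from $\B_{M^\ast,N}$ has its first $N$ points distributed as $\B_N$, and deleting the remaining $M^\ast-N=O(\epsilon'N)$ points displaces the empirical field by $O(\epsilon')$ in $d_{\probas(\config)}$; if $M^\ast<N$, a configuration sampled from $\B_N$ contains a sub-configuration of $M^\ast$ points distributed as $\B_{M^\ast,N}$, and the $N-M^\ast$ extra points again displace the field by $O(\epsilon')$. In either case $\B_N(\bR_N\in B(\bPst,\epsilon+C\epsilon'))\ge\B_{M^\ast,N}(\bR_N^{(M^\ast)}\in B(\bPst,\epsilon))$, whence $\liminf_N\frac1N\log\B_N(A)\ge -\int_\Lambda\ERS[\bPst^x|\Poisson^1]\,dx-(\log|\Lambda|-|\Lambda|+1)-o_\epsilon(1)$; letting $\epsilon\to0$, taking the supremum over $\bPst\in\mathring A\cap\probas_{s,1}(\Lambda\times\config)$, and upgrading from balls to arbitrary $A$ by exponential tightness (established exactly as at the end of the proof of Lemma \ref{SanovStep1}) gives the left-hand inequality of \eqref{EqSB5}.

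I expect the main obstacle to be the concentration estimate $|M-N|\le\epsilon'N$: one must show, using only the topology of weak convergence on $\config$ (hence bounded Lipschitz, not point-counting, test functions), together with stationarity and the normalisation $\int_\Lambda\rho_{1,\bPst^x}\,dx=1$, that no $M$-point empirical field with $M$ bounded away from $N$ can come $\epsilon$-close to $\bPst$. This is also precisely where the restriction $\bPst\in\probas_{s,1}$ enters: without the total-intensity-one condition the number of points would be pinned at a value $\ne N$, $\B_N$ would charge no neighbourhood of $\bPst$, and the rate function would be $+\infty$ off $\probas_{s,1}$, which is why the infimum in \eqref{EqSB5} is restricted to $\mathring A\cap\probas_{s,1}(\Lambda\times\config)$.
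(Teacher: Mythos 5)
Your architecture is essentially the paper's. The upper bound (condition the restricted Poisson process on having exactly $N$ points, apply the LDP upper bound of Lemma \ref{SanovStep1} via Lemmas \ref{LLDPinduit}--\ref{transfertLDP}, and use Stirling for $p_{N,N}$) is identical. Your lower bound follows the same scheme as well: decompose the Poisson ball probability over the number of points $M$, restrict $M$ to a window of width $\epsilon' N$ around $N$ using the total-intensity-one normalisation of $\bPst$, pigeonhole over the remaining polynomially many values, apply Stirling to the Poisson weight, and compare the $M$-point Bernoulli field to the $N$-point one. The only methodological difference is in that last comparison: the paper goes through Lemma \ref{changementpoints}, conditioning a larger Bernoulli process $\B_L$ with $L=N(1+\epsilon)$ to have $M$ points in $N^{1/d}\Lambda$ and controlling the change of normalisation as in \eqref{compaLN}, while you use a direct add/delete coupling of $\B_{M^\ast,N}$ with $\B_N$. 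Your coupling is fine; note only that deleting or adding $\epsilon' N$ points moves the empirical field by $o_{\epsilon'}(1)$ (with a logarithmic correction coming from the weights in \eqref{dconfig}) rather than literally $O(\epsilon')$, which changes nothing.

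The obstacle you flag is real, and as you state it the claim is false: $\bR_N^{(M)}\in B(\bPst,\epsilon)$ does \emph{not} force $M\le N(1+\epsilon')$. Bounded local (truncated-count) test functions detect only a \emph{deficiency} of points: by uniform integrability of $\Nn(0,1)$ under $\bPst$ (this is where $\probas_{s,1}$ enters) they yield $M\ge N(1-\epsilon')$, but an excess of order $N$ points piled into a region occupying a negligible fraction of the volume of $N^{1/d}\Lambda$ alters $\left(\theta_x\cdot\mc{C}\right)\cap\carr_k$ only for a vanishing fraction of the translates $x$, so such configurations remain in $B(\bPst,\epsilon)$. Hence there is no deterministic two-sided window; and with only the one-sided bound your pigeonholed $M^\ast$ could be of order $cN$ with $c>1$, which ruins both the Stirling constant and the transfer to $\B_N$. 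The correct statement is probabilistic, not topological: under the Poisson (or Bernoulli) reference measure, an excess of $\epsilon N$ points that ball-membership confines to a volume fraction $O(\delta)$ of $N^{1/d}\Lambda$ has probability of order $\exp\left(-cN\epsilon\log(\epsilon/\delta)\right)$, which after the iterated limit $N\to\infty$ then $\delta\to0$ is negligible at the exponential scale $e^{-N\int_{\Lambda}\ERS[\bPst^x|\Poisson^1]dx}$, so the ball probability is indeed carried by $\{|M-N|\le\epsilon' N\}$. This is precisely the role of the restriction step \eqref{pibonpoint} in the paper (phrased there via the auxiliary function $\tilde{\chi}$ and the boundary-point estimate), a step the paper itself passes over quickly; your proof needs that estimate inserted at your step 3, after which it coincides with the paper's argument.
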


\begin{proof}
\textit{Step 1.} \textbf{Upper bound.} \\
The upper bound of \eqref{EqSB5} follows from the LDP upper bound of Lemma \ref{SanovStep1}. The conditional expectation of $\Poisson^1$ conditioned into having $N$ points in $N^{1/{\d}} \Lambda$, is equal to the law of a Bernoulli point process. In particular, for any $A \subset \probas_s(\Lambda \times \config)$ we have (with $\bR_N$ as in Lemma \ref{SanovStep1})
\begin{multline}
\frac{1}{N} \log \bR_N (A) \geq \frac{1}{N} \log \bS_N(A) + \frac{1}{N} \log \Poisson^1 \left( \text{$N$ points in $N^{1/{\d}} \Lambda$} \right) \\ = \frac{1}{N} \log \bS_N (A) + \frac{1}{N} \log \left(\exp\left(-N|\Lambda|\right) \frac{1}{N!} (N|\Lambda|)^N\right) \\ = \frac{1}{N} \log \bS_N (A) + (\log |\Lambda| - |\Lambda| + 1) + o_{N \ti}(1), 
\end{multline}
which yields the upper bound in \eqref{EqSB5}.

\textit{Step 2.} \textbf{Lower bound.} \\
For simplicity, we will prove a weak lower bound\footnote{The general LDP lower bound follows from a similar argument or by using exponential tightness.}, namely for any $\bP$ in $\probas_{s,1}(\Lambda \times \config)$ we claim that:
\begin{equation}
\label{weaklowerboundbernou} 
\liminf_{\epsilon \to 0} \liminf_{N \ti} \frac{1}{N} \log \bS_N (B(\bP, \epsilon))  \geq - \bERS[\bP | \Poisson^1] - (\log |\Lambda| - |\Lambda| + 1).
\end{equation}
We want to use the LDP lower bound of Lemma \ref{SanovStep1}, but we have to argue that (enough) configurations $\C$ in $N^{1/\d} \Lambda$, for which the tagged empirical field is close from $\bP$ as in \eqref{MarkLB}, have approximately $N$ points (then the constant $- (\log |\Lambda| - |\Lambda| + 1)$ comes from normalizing the probability measure).

We sketch the argument here. Let $\chi$ be a smooth, non-negative function with compact support in the unit ball of $\Rd$, and such that $\int \chi = 1$, we denote by $\tilde{\chi}$ the map $\C \mapsto \int_{\Rd} \chi\, d\C$. We have
\begin{equation*}
\Esp_{\bP}[\tilde{\chi}] = \Esp_{\bP} [\Numb_1] = 1,
\end{equation*}
and by continuity, if $\bar{Q}$ is close from $\bP$ we must have $\Esp_{\bar{Q}} [ \tilde{\chi} ] \approx 1$. But on the other hand, if $\bar{Q}$ is an empirical field obtained by averaging over translations in some domain, $\Esp_{\bar{Q}} [ \tilde{\chi} ]$ detects the average number of points per unit volume in the domain, thus the associated realization of the Poisson process has $N$ points. Three points must be precised in order to make the previous argument rigorous:
\begin{itemize}
\item First, the map $\C \mapsto \tilde{\chi}(\C)$ is continuous but not bounded (it can be of the same order as the number of points in the unit ball), hence cannot be tested against weakly converging point processes. 
\item Next, if $\bar{Q}$ is an empirical field, then $\Esp_{\bar{Q}} [ \tilde{\chi} ]$ is slightly less than the average density due to some boundary effect.
\item Finally, we may only deduce that the associated realization of the Poisson process has $N(1+o_N(1))$ points.
\end{itemize}
For the first issue, for any $M > 0$ we may replace $\tilde{\chi}$ by $\tilde{\chi} \wedge M$ (which can now be tested against weak convergence). Using elementary properties of the Poisson point process, we see that the errors due to this truncation become negligible as $M \ti$, up to an event of negligible probability. The second point can be dealt with by observing that the number of boundary points is negligible (with respect to $N$), again up to an event of negligible probability. Finally, once we have a realization with $N(1+o_N(1))$ points, we may add or delete $No_N(1)$ points without changing the empirical field too much.
\end{proof}

\subsubsection{From Bernoulli to imperfect confinement} \label{sec:preuveSanovbQNoui}
Finally, we extend the previous results to the case where the points are sampled according to $\QNbeta$ (as defined in \eqref{def:QNbeta}) and not a Bernoulli process. Let us recall that $\QNbeta$ has a constant density on $\omega^N$ ($\zeta$ vanishes on $\omega$, see definitions \eqref{defzeta}, \eqref{defomega}), and that its marginal density tends to zero as $\exp(-\beta N \zeta(x))$ outside $\omega$. Therefore we expect $\QNbeta$ to behave like a Bernoulli point process with $N$ points on $N^{1/{\d}} \omega$ (which would correspond to a “perfect confinement” where $\zeta = + \infty$ outside $\omega$). 

We may now turn to proving Proposition \ref{SanovbQN}.
\begin{proof} 
\textit{Step 1.} \textbf{Lower bound.} \\
The probability under $\QNbeta$ that all the points fall inside $N^{1/{\d}}\Sigma$ is equal to
\begin{equation*}
\left( \frac{|\Sigma|}{ \int_{\R^{\d} } \exp\left(-\beta N^{1-\frac{\s}{\d}} \zeta(x) \right)\, dx} \right)^N.
\end{equation*}
Using the dominated convergence theorem, we see that 
\begin{equation} \label{pointshorsuppb}
\lim_{N \ti} \frac{1}{N} \log \QNbeta\left( \text{$N$ points in $N^{1/\d} \Sigma$} \right) = \log |\Sigma| - \log |\omega|.
\end{equation}

The conditional expectation with respect to this event is a Bernoulli point process with $N$ points in $N^{1/\d} \Sigma$. Using the LDP lower bound of Lemma \ref{SanovBernou} together with \eqref{pointshorsuppb} we obtain the lower bound for Proposition \ref{SanovbQN}.

\textit{Step 2.} \textbf{Upper bound.} \\
The law of total probabilities yields, for any $A \subset \probas_s(\Sigma \times \config)$ (denoting by $\# \Sigma_{N}$ the number of points in $N^{1/\d} \Sigma$):
\begin{equation*}
\limsup_{N \ti} \frac{1}{N} \log \bQpN(A) \leq \limsup_{N \ti} \frac{1}{N} \log \sum_{k = 0}^N \bQpN(A \cap  \# \Sigma_{N} = k) \QNbeta( \# \Sigma_{N} = k).
\end{equation*}
Conditionally to the event $\{\# \Sigma_{N} = k\}$ the law of $\QNbeta$ is equal to that of a  Bernoulli point process with $k \leq N$ points in $N^{1/{\d}} \Sigma$, and the LDP upper bound of Lemma \ref{SanovBernou}  allows us to bound each term.

More precisely it is easy to see that with overwhelming probability the number of points $\# \Sigma_N$ tends to infinity (e.g.  as $\sqrt{N}$) so that we may bound 
\begin{multline*}
\frac{1}{N} \log \sum_{k = 0}^N \bQpN(A \cap  \# \Sigma_{N} = k) \QNbeta( \# \Sigma_{N} = k)\\ \leq \frac{1}{N} \log  \sum_{k = \sqrt{N}}^N \bQpN(A \cap  \# \Sigma_{N} = k) \QNbeta( \# \Sigma_{N} = k).
\end{multline*}
Bounding $\QNbeta( \# \Sigma_{N} = k)$ by $1$ and the terms $\frac{1}{N} \log \bQpN(A \cap  \# \Sigma_{N} = k)$ by $$\frac{1}{k} \log \bQpN(A \cap  \# \Sigma_{N} = k),$$ and using Lemma \ref{SanovBernou} we get the result.
\end{proof}

\subsection{Discrete average} \label{sec:preuvebQN2} \label{sec:preuvesanovdiscret}
In this section we give the proof of Lemma \ref{SanovbQN2}. The argument is analogous as the proof of the continuous case and we will only sketch it here.
\begin{proof} 
First, let us forget about the condition on the total number of points (i.e. we consider independent Poisson point processes) and about the tags (i.e. the coordinate in $\Sigmatil$), then there holds for any fixed $R$ a LDP for $\{\bM_{N,R}\}_N$ at speed $m_{N,R}$ with rate function $\Ent[ \cdot | \Poisson^1_{|\carr_R}]$. This is a consequence of the classical Sanov theorem (see\cite[Section 6.2]{dz}) since in this case the random variables $\theta_{x_i} \cdot \mathbf{C}_i$ are independent and identically distributed Poisson point processes on each hypercube. Taking the limit $R \ti$ yields, in view of the asymptotics \eqref{volumetiling} on $m_{N,R}$ and the definition \eqref{def:ERS} of the specific relative entropy,  
\begin{equation} \label{SanDiscPoisslim}
\lim_{R \ti} \liminf_{\nu \t0} \liminf_{N \ti} \frac{1}{N} \log \bM_{N,R}( B(\Pst_{\um|\carr_R}, \epsilon))  \geq - \ERS\left[\Pst_{\um} | \Poisson^1\right].
\end{equation}
We may then extend this LDP to the context of tagged point processes by following the same argument as in the continuous case.

Then, we argue as above (when passing from Poisson to Bernoulli). To condition the point process into having $\Ntil \approx N\mu_V(\Sigma_{\um})$ points in $\Sigmatil \approx \Sigma'_{\um}$ modifies the LDP lower bound obtained from Sanov's theorem by a quantity
$$
\limsup_{N \ti} \frac{1}{N} \log\left( e^{-N|\Sigma_{\um}|} \frac{\left(N|\Sigma_{\um}|\right)^{N\mu_V(\Sigma_{\um})}}{\left(N\mu_V(\Sigma_{\um})\right)!}\right),
$$
hence the constant $\cmuvum$ in \eqref{EqSB2}. This settles the first point of Lemma \ref{SanovbQN2}. 

The second point follows from the first one by elementary manipulations. The idea is that if one knows that a discrete average of large hypercubes is very close to some point process $\Pst$, then the continuous average of much smaller hypercubes is also close to $\Pst$ since it can be re-written using the discrete average up to a small error. More precisely for any fixed $\delta > 0$ establishing that a point process is in $B(\Pst, \delta)$ can be done by testing against local functions in $\Loc_k$ for some $k$ large enough. For $R,N$ large enough, an overwhelming majority of all translates of $\carr_k$ by a point in $\Sigma'_{\um}$ is included in one of the hypercubes $\barcarr_i$ ($i = 1 \dots m_{N,R}$) (this follows from the definitions and \eqref{volumetiling}).

For any such local function $f \in \Loc_k$ we have
\begin{equation} \label{discrtocont}
\frac{1}{|\Sigma_{\um}'|} \int_{\Sigma_{ \um}'} f(\theta_{x} \cdot \mc{C}) \approx \frac{1}{m_{N,R}} \sum_{i=1}^{m_{N,R}} \frac{1}{R^{\d}} \int_{\bar{\carr}_i} f(\theta_{x} \cdot \mc{C}) dx, 
\end{equation}
which allows us to pass from the assumption that the discrete average (in the right-hand side of \eqref{discrtocont}) of a configuration is close to $\Pst$ to the fact that the continuous average (in the left-hand side of \eqref{discrtocont}) is close to $\Pst$. These considerations are easily adapted to the situation of tagged point processes.
\end{proof}

\section{Additional proofs}\label{annex}
We collect here the proofs of various lemmas used in the course of the paper.
\subsection{Proof of Lemma \ref{Ldistconfig}}
\label{sec:preuveLdistconfig}
\begin{proof}
The first point ($\config$ is a Polish space) is well-known, see e.g. \cite[Proposition 9.1.IV]{dvj2}). 

It is easy to see that $\dconfig$ is a well-defined distance (the only point to check is the separation property). It is also clear that any sequence converging for $\dconfig$ converges for the topology on $\config$. Conversely, let $\{\mu_n\}_n$ be a sequence in $\config$ which converges vaguely to $\mu$ and let $\ep > 0$. There exists an integer $K$ such that $\sum_{k \geq K} \frac{1}{2^k} \leq \frac{\ep}{2}$ so we might restrict ourselves to the first $K$ terms in the series defining $\dconfig(\mu_n, \mu)$. 

For each $k =1, \dots, K$ and for any $n$, let $\mu_{n,k}$ (resp. $\mu_k$) be the restriction to the hypercube $\carr_k$ of $\mu_n$ (resp. of $\mu$). For any $k = 1, \dots, K$ the masses $(\mu_{n,k}(\carr_k))_{n \geq 1}$ are integers, and up to passing to a common subsequence (using a standard diagonal argument) we may assume that for each $k$ the sequence $\{\mu_{n,k}(\carr_k)\}_{n \geq 1}$ is either constant or diverging to $+ \infty$. The terms for which this could diverge give a negligible contribution to the distance.

We may then restrict ourselves to the terms $k$ for which $\mu_{n,k}$ is some constant $N_k$. By compactness we may then assume that the $N_k$ points of the configuration converge to some $N_k$-tuple $x_1, \dots, x_{N_k}$ of points in $\carr_k$. It is easy to see that $N_k$ must be equal to $\mu_k(\carr_k)$ and that the points $x_1 \dots x_{N_k}$ must correspond to the points of the configuration $\mu_k$. This implies the convergence in the sense of $\dconfig$. 

In conclusion, from any sequence $\{\mu_n\}_n$ converging to $\mu$ for the topology on $\config$, we may extract a subsequence which converges to $\mu$ in the sense of $\dconfig$, and conversely. Therefore $\dconfig$ is compatible with the topology on $\config$.

We now prove the approximation property. Let $F$ be in $\Lip(\config)$ and $\delta > 0$. From the definition \eqref{dconfig} of $d_{\config}$ we see that there exists $k$ such that if two configurations $\mc{C}, \mc{C}'$ coincide on $\carr_k$ then $d_{\config}(\mc{C}, \mc{C}') \leq \delta$. We let $f_k := F(\mc{C} \cap \carr_k)$. By definition $f$ is a local function in $\Loc_k$, we have chosen $k$ such that $d_{\config}(\mc{C}, \mc{C} \cap \carr_k) \leq \delta$ for any configuration $\mc{C}$ and since by assumption $F$ is $1$-Lipschitz we have
$$
|F(\mc{C}) - f(\mc{C})| = |F(\mc{C}) - F(\mc{C} \cap \carr_k)| \leq d_{\config}(\mc{C}, \mc{C} \cap \carr_k) \leq \delta,
$$
and $k$ here depends only on $\delta$, which concludes the proof of Lemma \ref{Ldistconfig}.
\end{proof}

\subsection{Proof of Lemma \ref{infatteint}}
\label{sec:proofinfatteint}
\begin{proof}
Let us denote as in Section \ref{sec:deflap} by $X=(x,y)$ the coordinates in $\R^{\d} \times \R^\k$. We also recall that $\gamma\in (-1,1)$.

Let $E_1$ and $E_2$ be elements of $\Elec_m$ compatible with the same configuration. Then we have $E_1- E_2= \nab u$ where $u$ solves $-\div (\yg \nab u)=0$. We can also observe that $\nab_x u$ (where $\nab_x$ denote the vector of derivatives in the $x$ directions only), is also a solution to the same equation (this should be understood component by component).
This is  a divergence form equation with a weight $\yg$ which belongs to the so-called Muckenhoupt class $A_2$. The result of \cite[Theorem 2.3.12] {KFS} then says 
that there exists $\lambda>0$ such that for $0<r<R$,
\begin{equation}\label{kfs}
\mathrm{osc}(\nab_x u, B(X, r)) \le C \left( \frac{1}{\int_{B(X,R)} \yg} \int_{B(X,R)} \yg |\nab_x u|^2 \right)^{1/2} (r/R)^{\lambda},
\end{equation} 
where $\mathrm{osc}(u, B(X,r))= \max_{B(X,r)}u -\min_{B(X,r)} u$.
We note that the condition that $\mc{W}(E_1, m) $ and $\mc{W}(E_2,m)$ are finite implies without difficulty that 
\begin{equation}\label{limsu}
\limsup_{R\to \infty}\frac{1}{R^{\d}} \int_{K_R\times \R^{\d} }\yg|\nab u|^2 <+\infty.
\end{equation} 
Applying \eqref{kfs} to $X$ which belongs to a fixed compact set, and inserting \eqref{limsu} we find that 
\begin{align*}
&\mathrm{osc}(\nab_x u, B(X,r))\leq C\left( R^{-(\d+1+\gamma)} R^{\d}\right)^{1/2} (r/R)^{\lambda} 
\quad \text{ for $\k =1$,} \\
&\mathrm{osc}(\nab_x u, B(X,r)) \le C \left( R^{-\d} R^{\d} \right)^{1/2} (r/R)^\lambda \quad \text{for $\k = 0$.}
\end{align*} 
In both cases, letting $R \to \infty$, we deduce that $\mathrm{osc}(\nab_x u, B(X,r))=0$, which means that $\nab_x u$ is constant on every compact set of $\R^{\d+\k}$.

 In the case $\k=0$, this concludes the proof that $u$ is affine, and then $E_1$ and $E_2$ differ by a constant vector.

In the case $\k=1$, this implies that $u$ is an affine function of $x$, for each given $y$. We may thus write $u(x,y)= a(y)\cdot  x+ b(y).$
Inserting into the equation $\div (\yg \nab u)=0$, we find that 
$\partial_y(\yg ( a'(y) x+b'(y))=0$, i.e. $a'(y)x+b'(y)= \frac{c(x)}{\yg}$.
But the fact that $\int_\R \yg |\partial_y u|^2\, dy $ is convergent implies that $\int \frac{c(x)^2}{\yg}\, dy$ must be, which  implies that $c(x)=0$ and thus $\partial_y u=0$. This means that $u(x,y)=f(x)$. But then again $\int \yg |\nab u|^2\, dy $ is convergent so we must have $\nab f(x)=0$  and $u$ is constant. Thus $E_1=E_2$ as claimed.

In the case $\k=1$, it follows that $\W(\mc{C}, m)$ (if it is finite) becomes an $\inf$ over a singleton, hence is achieved.

Let us now turn to the case $\k=0$ (i.e. \LogD \ or \Riesz \ with $\s = \d-2$). Let $E$ be in $\Elec(\C, m)$ such that $\mc{W}(E, m)$ is finite, and let $c$ be a constant vector in $\R^{\d}$,  then 
\begin{equation}\label{unifconti}
 \dashint_{\carr_R}  |E_\eta+c |^2 - m\c \g(\eta)=\dashint_{\carr_R} |E_\eta|^2- m \c  \g(\eta)+ |c|^2 + 2 c\cdot \dashint_{\carr_R}E_\eta.
 \end{equation}
We claim that $\dashint_{\carr_R} E_\eta$ is bounded independently of $\eta$ and $R$.  So the right-hand side of \eqref{unifconti} is a quadratic function of $c$, with fixed  quadratic coefficients and  linear and constant coefficients  which are bounded with respect to $R $ and $\eta$. A little bit of convex analysis implies that  $c \mapsto \mc{W}(E+c, m)$ being a limsup (over $R $ and $\eta$) of such functions is strictly convex, coercive and locally Lipschitz, hence it achieves its minimum for a unique $c$.
This means that the infimum defining $\W( \cdot , m)$ is a uniquely achieved minimum.

To conclude the proof, we just need to justify that $\dashint_{\carr_R} E_\eta$ is bounded independently of $\eta$ and $R$. We may write 
$$\dashint_{\carr_R} E_\eta= \dashint_{\carr_R} E_1+ \dashint_{\carr_R} (\nab \f_1 - \nab \f_\eta) * \mc{C} ,$$
where $\f_\eta$ is as in \eqref{def:truncation}.

Since, in the cases $\k = 0$, the functions $\nab \f_\eta$ and $\nab \f_1$ are integrable, we may check that 
\begin{equation*}
\int_{\carr_R}  (\nab \f_1 - \nab f_\eta) * \mc{C} \leq C \Numb_R(\C),
\end{equation*}
 where $C$ is independent of $R$ and $\eta$. But since $\mc{W}(E, m) <\infty$ and $E$ is $\Elec_m$, we have 
$$
\lim_{R \to \infty} \frac{1}{R^\d} \Numb_R(\C) = m,
$$ 
as follows e.g. from \cite[Lemma 2.1]{petrache2014next}. We deduce that
$$
\left|\dashint_{\carr_R} E_\eta\right|\le C(1+  \mc{W}(E, 1) + m)
$$
and by almost monotonicity of $\mc{W}$ (Lemma \ref{prodecr2}) the claim follows.
\end{proof}

\subsection{Proof of Lemma \ref{discerrtronc}}
\label{sec:preuvelowerboundtrunca}
\begin{proof}
From \cite[(2.29)]{petrache2014next} we know the following: let $\C$ be in $\config$, $m \geq 0$ and $E \in \Elec(\C, m)$ such that $\mc{W}(E,m)$ is finite. For any $\eta \in (0,1)$  we have 
\begin{equation} \label{prodecr2} \mc{W}_{\tau} (E,m)-\mc{W}_\eta (E,m)  \geq 
C \limsup_{R\to \infty} \frac{1}{R^{\d}} \sum_{p\neq q \in \mc{C} \cap \carr_R, |p-q|\le \eta} (\g(|p-q|+\tau)- \g(\eta))_+ -  m^2 o_{\eta}(1).
\end{equation}
where $C$ depends only on $\s$ and $\d$.

To prove \eqref{eqlemdisc} we take the expectation under $\bP$ (more precisely, under a tagged electric process compatible with $(\bP, \meseq)$) of \eqref{prodecr2}. Using stationarity, we obtain
\begin{multline*}
\bttW_{\tau} (\bP, \meseq)- \bttW_\eta(\bP, \meseq)
\\ \geq  C \Esp_{\bP} \left[\limsup_{R\to \infty} \sum_{p\neq q\in \mc{C}\cap \carr_R} \left( g(|p-q|+ \tau) - \g(\eta)\right)_+\right] - o_{\eta}(1)
\\ = C \limsup_{\tau\to 0} \Esp_{\bP}\left[ \sum_{p\neq q\in \mc{C}\cap \carr_1} \( \g(|p-q|+ \tau) - \g(\eta)\right)_+\right] - o_{\eta}(1).
\end{multline*}

In all cases \LogU \ , \LogD \ , \Riesz, there exists $C>0$ depending only on $\d, \s$ such that for any $0 < \tau<\eta^2/2$, 
\begin{multline*}
\sum_{p\neq q \in \mc{C}\cap \carr_1} \left( \g(|p-q| + \tau) - \g(\eta)\right)_+
\\
\ge C \g(2\tau)  \sum_{\vec {i} \in  \carr_1 \cap   \tau \Z^d}   ( \Numb_{\tau}(\vec{i})(\C)^2 -1)_{+}, + C \sum_{p\neq q \in \mc{C}\cap \carr_1, |p-q| \leq \eta^2/2} \left( \g(p-q) \right)_+, 
 \end{multline*}
where we denote by  $ \Numb_{\tau}(\vec{i})(\C)$ the number of points of the configuration $\mc{C}$ in the hypercube of center $\vec{i}$ and sidelength $\tau$,  with  $\vec{i}$ on the lattice  $\tau \Z^d$. On the other hand, stationarity implies
$$
\Esp_{\bP} \left[\sum_{\vec {i} \in  \carr_1 \cap   \tau \Z^d}   ( \Numb_{\tau}(\vec{i})(\C)^2 -1)_{+}\right] = \frac{1}{\tau^d} \Esp_{\bP} \left[ (\Numb_{\tau}^2 -1)_{+}\right],
$$
which concludes the proof of the lemma. 
\end{proof}

\subsection{Proof of Lemma \ref{lem:concordance}} \label{sec:preuveconcordance}
\begin{proof}
Let $P\in \probas_s(\config)$ be  such that $\Esp_{P} [ \W(\C, 1)]$ is finite. For $P$-a.e. $\C$, the energy $\W(\C,1)$ is thus finite, and we may find (according to Lemma \ref{infatteint}) an electric field $E$ compatible with $(\C, 1)$ and such that $\mc{W}(E, 1) = \W(\C,1)$. Let $\Pelec$ be the push-forward of $P$ by this map $\C \mapsto E$. By definition, $\Pelec$ is compatible with $P$ and its energy is that of $P$, but it may happen that $\Pelec$ is not stationary. In that case, we consider a \textit{stationarizing sequence}, namely a sequence of averages of translations of $\Pelec$ over large hypercubes. Each element of this sequence is still compatible with $(P,1)$ (because $P$ is stationary) and has the correct energy. Any limit point is compatible, has the correct energy, and moreover is stationary.  We thus have
$$
\ttW(\Pst,1) \geq \min \left\lbrace \Esp_{\Pelec}\left[\tW( \cdot, 1)\right] \ | \ \Pelec \textit{ stationary and compatible with } (P,1) \right\rbrace.
$$
The reverse inequality is obvious by definition of $\ttW_1$. 
\end{proof}

\subsection{Proof of Lemma \ref{LemmeDiscr}}
\label{sec:proofLemDiscr}
\begin{proof} 
If we admit \eqref{borneDiscr}, we easily get that $P$ has intensity $1$. Indeed, Jensen's inequality yields
\begin{equation*} 
\lim_{R \ti} \frac{\Esp_{\Pst}[\dis_R ]}{R^\d} = 0,
\end{equation*}
and on the other hand the stationarity assumption implies that for any $R > 0$
\begin{equation*}
\frac{\Esp_{\Pst}[\dis_R]}{R^{\d}} = \Esp_{\Pst}[\dis_1].
\end{equation*} 
Hence we get $\Esp_{\Pst}[\dis_R] = 0$ for any $R > 0$, which implies that $\Pst$ has intensity $1$. 

We now turn to proving \eqref{borneDiscr}.\\
\textit{Step 1.} \textbf{Preliminary bounds.} \\
 From Lemma~\ref{lem:concordance} we obtain a stationary electric process $\Pelec$ compatible with $(P,1)$ and such that $\tW(\Pelec,1) = \ttW(P,1)$.  We set $\eta_0 :=\frac{1}{4}$, and the monotonicity property \eqref{prodecr2} implies that
$$
\Esp_{\Pelec} \left[ \mc{W}_{\eta_0}(E,1) \right] \leq \tW(\Pelec,1) + C \leq \ttW(P, 1) + C,
$$
with a constant $C$ depending only on $\d,\s$. 

In the case $\k=1$, by stationarity and the definition of $\mc{W}$ we see that
\begin{equation*}
\Esp_{\Pelec} \left[\int_{\carr_1 \times \R^\k} \yg |E_{\eta_0}|^2\right] =\Esp_{\Pelec} \left[ \mc{W}_{\eta_0}(E, 1) \right] + \cds \g(\eta_0) \leq \tW(\Pelec, 1)  + C = \ttW(P,1) + C,
\end{equation*}
with a constant $C$ depending only on $\d,\s$. For any $R > 0$, by a mean value argument we may find $T \in (R, 2R)$ such that
\begin{equation} \label{choixTdiscr}
\Esp_{\Pelec} \left[ \int_{\carr_1 \times \{-T,T\}} \yg |E_{\eta_0}|^2\right] \leq  \frac{1}{R} \left(\ttW(P,1)  + C\right).
\end{equation}

\textit{Step 2.} \textbf{Expressing the discrepancy in terms of the field.} \\
Letting $\tcarr_R$ be the hyperrectangle $\carr_R \times[-T, T]^\k$ we have, integrating by parts
\begin{equation} \label{circulation}
\int_{\partial   \tcarr_R} \yg E_{\eta_0} \cdot \vec{\nu} = \int_{\tcarr_R} - \div(\yg E_{\eta_0}) = \c(\dis_R(\C) + r_{\eta_0}),
\end{equation}
where $\C$ is the point configuration with which $E$ is compatible, and the error term $r_{\eta_0}$ is bounded by  $n_{\eta_0}$, the number of points of $\C$ in a $\eta_0$-neighborhood of $\partial \carr_R$. We may roughly include this neighborhood in the disjoint union of $O(R^{\d-1})$ hypercubes of sidelength $1$. By stationarity we have 
\begin{equation} \label{controleneta0}
\Esp_{\Pst}[n_{\eta_0}^2] \leq CR^{2\d-2} \Esp_P \left[\Numb_1^2\right].
\end{equation}
Taking the expectation of \eqref{circulation} against $\Pelec$ and using elementary inequalities and \eqref{controleneta0} we get
\begin{equation} \label{discrnelle}
\Esp_{\Pst} \left[\dis_R^2\right] \leq C \Esp_{\Pelec} \left[ \int_{\partial  \tcarr_R} \yg |E_{\eta_0}|^2 \right] \left(\int_{\partial  \tcarr_R} \yg \right) + C R^{2\d-2} \Esp_P\left[\Numb_1^2\right].
\end{equation}

We can compute
\begin{align}
\label{estimygdiscr1} & \int_{\partial  \tcarr_R} \yg = CR^{\d-1} = CR^{\s+1} \quad \text{ for $\k = 0$},\\
\label{estimygdiscr2} & \int_{\partial  \tcarr_R} \yg \leq C R^{\d-1} \int_{0}^{T} \yg + CR^{\d} T^{\gamma}  \leq CR^{\s+1} \quad \text{ for $\k =1$}.
\end{align}

\textit{Step 3.} \textbf{Control on the boundary terms.} \\
We split $\partial  \tcarr_R$ as the disjoint union of
\begin{enumerate}
\item $2\d$ faces of the type $[-R/2, -R/2] \times \dots \{\pm R/2\} \times \dots \times [-R/2, R/2] \times [-T, T]^\k$, 
\item $0$ (if $\k=0$) or $2$ (if $\k =1$) faces of the type $\carr_R \times \{\pm T\}^\k$.
\end{enumerate}
For each of the $2\d$ faces of the first type we may write, using the stationarity of $\Pelec$,
\begin{multline} \label{usestatdiscr}
\Esp_{\Pelec} \left[ \int_{[-\frac{R}{2}, \frac{R}{2}] \times \cdots \{\pm \frac{R}{2}\} \times \cdots \times [-\frac{R}{2}, \frac{R}{2}] \times [-T, T]^\k} \yg |E_{\eta_0}|^2 \right] \\ = \frac{1}{R} \Esp_{\Pelec} \left[ \int_{\carr_R \times [-T,T]^\k} \yg |E_{\eta_0}|^2 \right] \\
\leq \frac{1}{R}  \Esp_{\Pelec} \left[  \int_{\carr_R \times \R^\k} \yg |E_{\eta_0}|^2 \right] \leq CR^{\d-1} \left(\ttW(\Pst, 1) + C\right),
\end{multline}
whereas for the second type of faces we have, using \eqref{choixTdiscr} and the stationarity of $\Pelec$
\begin{multline} \label{uppfacediscr}
\Esp_{\Pelec} \left[ \int_{\carr_R \times \{-T,T\}^\k} \yg |E_{\eta_0}|^2 \right] = R^{\d} \Esp_{\Pelec} \left[ \int_{\carr_1 \times \{-T, T\}^\k} \yg |E_{\eta_0}|^2\right] \\ \leq CR^{\d-1}(\ttW(\Pst, 1) + C).
\end{multline}
Inserting  \eqref{estimygdiscr1} (if $\k=0$) or \eqref{estimygdiscr2} (if $\k=1$), \eqref{usestatdiscr} and \eqref{uppfacediscr} (if $\k=1$) into \eqref{discrnelle} we obtain
\begin{equation} \label{discrnelle2}
\Esp_{\Pst} \left[\dis_R^2\right] \leq C \left(\ttW(P, 1) + C\right) R^{\d-1+\s+1} + C R^{2\d-2} \Esp\left[\Numb_1^2\right]. 
\end{equation}

\textit{Step 4.} \textbf{Conclusion.} \\
We have thus proven that
\begin{equation}
\Esp_{\Pst} \left[\dis_R^2\right] \leq C \left(\ttW(P, 1) + C\right) R^{\d+\s} + C R^{2\d -2} + C R^{2\d-2} \Esp\left[\dis_1^2\right],
\end{equation}
it remains to show that $\Esp\left[\dis_1^2\right]$ is itself bounded by $C \left(\ttW(P,1) + C\right)$, which would conclude the proof. For $L > 2$ let us define an auxiliary function
$$
\Psi_L(x) := \frac{x^2}{L^\s} \min\left(1, \frac{|x|}{L^\d} \right).
$$ 
We may re-write \cite[Lemma 2.2]{petrache2014next} as 
\begin{equation} \label{auxiliarydiscr}
\Psi_L(\dis_L) \leq C \int_{\carr_{2L} \times [-L,L]^{\k}} \yg |E_{\eta_0}|^2.
\end{equation}
Taking the expectation we get as above
\begin{equation*}
\Esp_{P} \left[ \Psi_L(\dis_L) \right]  \leq CL^\d (\ttW(P, 1) + C).
\end{equation*}
Distinguish between the events $|\dis_L| \leq L^{\d}$ and $|\dis_L| > L^{\d}$, we obtain
\begin{equation} \label{discrplusfaible}
\Esp_{P} \left[ \dis^2_L \right]  \leq \min \left( L^{2\d}, C L^{ \d+ \s} (\ttW(P, 1) + C) \right).
\end{equation}
This is enough for our purpose. We could somehow improve \eqref{discrplusfaible} but we would still get a weaker bound than \eqref{borneDiscr}.
\end{proof}

We now give a proof of the result stated in Remark \ref{rem:Discr1d}.
\begin{proof}
We follow the same line as in the previous proof. We replace \eqref{choixTdiscr} by the following observation: since $\int_{\carr_1 \times \R} \yg |E_{\eta_0}|^2$ is finite we have
\[
\liminf_{T \ti} (T \log T) \int_{\carr_1 \times \{-T, T\}} \yg |E_{\eta_0}|^2 = 0.
\]
In particular we might find an increasing sequence $\{T_k\}_{k}$ with $\lim_{k \ti} T_k = + \infty$ and
\[
\lim_{k \ti} (T_k \log T_k) \int_{\carr_1 \times \{-T_k, T_k\}} \yg |E_{\eta_0}|^2  = 0.
\]
Setting $R_k = T_k \sqrt{\log T_k}$ and keeping the same notation as in the previous proof we see that for $k$ large enough we have
\[
\Esp_{P} [\dis_{R_k}^2] \leq C T_k\left(1 + \frac{R_k}{T_k \log T_k}\right) = o(R_k),
\]
which proves \eqref{discrcas1d}.
\end{proof}

\subsection{Proof of Lemma \ref{constructreseau}}
\label{sec:constructionreseau}
\begin{proof}
Let $m= \dashint_K \mu$ be the average of $\mu $ over $K$. 
We may (see e.g.  \cite[Lemma 6.3]{petrache2014next}) partition $K$ into  $N_K$ hyperrectangles $\mathcal R_i$, which all have volume $1/m$, and whose sidelengths are in $[2^{-\d} m^{-1/{\d}}, 2^\d m^{1/{\d}}]$.
In each of these hyperrectangles we solve 
\begin{equation}\left\lbrace\begin{array}{ll}
-\div(\yg \nab h_i) = \c \left( \delta_{X_i} - m \delta_{\Rd} \right)  & \text{ in } \mathcal R_i  \times [-1,1]^\k\\
\nab h_i \cdot \vec{\nu} =0 & \text{on} \ \partial  (\mathcal R_i\times [-1,1]^\k)\end{array}\right.
\end{equation} 
According to \cite[Lemma 6.5]{petrache2014next}, if $X_i\subset \R^{\d} \times\{0\} $ is at distance $\le 2^{-(\d+1)} m^{-1/{\d}}$ from the center $p_i$ of $\mathcal R_i$ then we have 
$$\lim_{\eta\to 0}\left|\int_{\mathcal R_i \times [-1,1]^\k} \yg |\nab (h_i)_\eta|^2 - \c \g(\eta)\right|\le C$$ where $C$ depends only on $\d$ and $m$.
We may then define $E_i= \nab h_i\indic_{\mathcal R_i \times [-1,1]^\k}$, and by compatibility of the normal components, the vector field $\Egen=\sum_i E_i$ satisfies 
   \begin{equation}\left\lbrace\begin{array}{ll}
-\div(\yg \Egen) = \c \left( \sum_i \delta_{X_i} - m \delta_{\Rd} \right)  & \text{ in } K \times \R^\k\\
\Egen \cdot \vec{\nu} =0 & \text{on}\  \partial  (K \times \R^\k)\end{array}\right.
\end{equation} 
and if $\eta<\eta_0 <2^{-(\d+2)} m^{-1/{\d}}$, 
\begin{equation}\label{estbrut}
\int_{K\times \R^\k} \yg|\Egene|^2 - \c N_K  \g(\eta) \le C N_K\end{equation} with 
$C$ depending only on $\d$ and $m$.
The last step is to rectify for the error made by replacing $\mu$ by $m$. For that, we use the following result from \cite[Lemma 6.4]{petrache2014next}.
 \begin{lem} \label{constvsmoy}
Let $K_R$ be a hyperrectangle whose sidelengths are in $[R, 2R]$, and $\mu$ a bounded measurable function such that $\int_{K_R} \mu$ is an integer, and let $m= \dashint_{K_R} \mu$. The solution (unique up to constant) to 
\begin{equation}\label{espygh}
\left\lbrace\begin{array}{ll}
-\div (\yg \nab h) = \c (\mu -m)  \drd & \quad \text{ in } K_R\times [-R,R]^\k\\
\nab h \cdot\vec{ \nu }  = 0 &  \text{ on } \partial  (K_R\times [-R,R]^\k),\end{array}\right.
\end{equation}
 exists and satisfies 
\begin{equation}\label{esterrect}
\int_{K_R \times [-R,R]^\k} \yg |\nab h|^2 \leq C R^{\d+1-\gamma}\|\mu-m\|_{L^\infty(K_R)}^2.
\end{equation}
\end{lem}

Applying this lemma provides a function $h$, 
and we  let $$\hat{E}= E+ \nab h \indic_{K\times [-R,R]^k}.$$
It is obvious that $\hat E$  solves 
\begin{equation}\left\lbrace\begin{array}{ll}
-\div(\yg \hat E) = \c \left( \sum_i \delta_{X_i} -\mu \delta_{\Rd} \right)  & \text{ in } K \times \R^\k\\
\hat E \cdot \vec{\nu} =0 & \text{on}\  \partial  (K \times \R^\k). \end{array}\right.
\end{equation} 
Combining \eqref{esterrect} and \eqref{estbrut} and using the Cauchy-Schwarz inequality, we obtain 
\begin{multline*}
\int_{K\times \R^\k} \yg| \hat E_\eta|^2 \le \c N_K  (\g(\eta) +C) +C R^{\d+1-\gamma}\|\mu-m\|_{L^\infty(K_R)}^2\\
+ C \(N_K \g(\eta)\)^\hal R^{\frac{\d+1-\gamma}{2}} \|\mu - m\|_{L^\infty(K_R)}.
\end{multline*}

Letting then $\mathcal {R}(K, \mu)$ be the family of configurations $\{X_i\}_{i=1}^{N_K}$ above where each $X_i$ varies in $B(p_i, 2^{-(\d+1)} m^{-1/{\d}})$ ($p_i$ being the center of $\mathcal R_i$), and with all possible permutations of the labels, we have thus obtained that for every $\mc{C}\in \mathcal  R(K,\mu)$ the desired results hold.
\end{proof}

\subsection{Proof of Lemma \ref{lem:pointsexterieurs}}
\label{preuve:finirconstruction}
\begin{proof}
\def \Bound{\mathrm{Bound}_{\mu}}
Let us return to the notation of Section \ref{sec611}. Since $\Sigma' \backslash \Sigma_T'$ is formed of disjoint sets of the form $\{x\in \Sigma', \dist (x,  \Gammapj)\le T_j\}$, we may work component by component, and from now on 
drop the indices $j$.
Let us set 
$$
\Bound(t) :=\int_{\{x\in \Sigma', \dist(x, \Gamma) \leq t\}} \meseq'(x)\, dx.
$$
In view of \eqref{reass5} and \eqref{longueur} we have 
\begin{align}
\label{717}
 & \Bound(t) \ge \cc \int_0^t \mathcal{H}^{\ll} (\Gamma) s^{\d-\ll-1}\left(\frac{s}{N^{1/\d}}\right)^\alpha ds\ge  \cc N^{\frac{\ll-\alpha}{\d}}  t^{\alpha+ \d - \ll}, \\
\label{717b}
& \Bound(t) \leq \cC \int_0^t \mathcal{H}^{\ll} (\Gamma) s^{\d-\ll-1}\left(\frac{s}{N^{1/\d}}\right)^\alpha ds \leq \cC N^{\frac{\ll-\alpha}{\d}}  t^{\alpha+ \d - \ll}.
\end{align}

The goal is to build layers that are thick enough to contain a large number of points, and so that the points can be placed in cells of bounded aspect ratios. 
To do so, we define inductively a sequence of $t$'s terminating at $T$, by letting $t_0=0$, and $t_1$ be the smallest such that 
\begin{equation}
\label{def:t1}
\Bound(t_1)\in \mathbb{N}, \quad t_1\ge N^{\frac{\alpha}{\d(\alpha+\d)}}.
\end{equation}
In view of \eqref{717} we have
$$
t_1 \le C N^{\frac{\alpha}{\d(\alpha+\d)}},
$$
for some $C$ depending on the constant $c_1$. We let $n_1 := \Bound(t_1)$ and from \eqref{717}, \eqref{717b} we observe that
$$
\cc N^{\frac{\ell}{\alpha+\d}}\le n_1 \le \cC N^{\frac{\ell}{\alpha+\d}}.
$$

Next, let us assume that $t_{j-1}$ has been constructed for some $j \geq 2$. We define $t_j$ as the smallest positive number such that 
\begin{equation}
\label{def:tj}
\Bound(t_j)-\Bound(t_{j-1})\in \mathbb{N}, \quad  t_j - t_{j-1} \geq  N^{\frac{\alpha}{\d^2}} t_{j-1}^{-\alpha/\d}.
\end{equation}
Elementary computations of exponents yield that for every $j \ge 2$, 
\begin{align*}
& t_j-t_{j-1} \leq  \cC N^{\alpha/\d^2 } t_{j-1}^{-\alpha/\d} \\
& t_j-t_{j-1} \leq  \cC N^{\frac{\alpha}{\d(\alpha+\d)}}.
\end{align*}
We set $n_j := \Bound(t_j)-\Bound(t_{j-1})$, and using \eqref{717}, \eqref{717b} we get 
\begin{equation*}
\cC  (t_j-t_{j-1}) t_{j-1}^{\alpha+\d-\ll-1}  N^{\frac{\ll-\alpha}{\d}} \geq n_j \ge \cc (t_j-t_{j-1}) t_{j-1}^{\alpha+\d-\ll-1}  N^{\frac{\ll-\alpha}{\d}}.
\end{equation*}

The construction terminates at $j=J$ such that $t_J=T$.
We may then partition each slice $\Gamma_{t_{j-1}}  \backslash \Gamma_{t_j}$ into $n_j$ regions $\mathcal{R}$ of “size” comparable to $t_j-t_{j-1}$ (which means that they contain a ball of radius $\cc (t_j-t_{j-1})$ and can be included in a ball of radius $\cC(t_j-t_{j-1})$) on which $\meseq'$ has mass $1$ and on which we place $1$ point each. We can construct a screened electric field for each region, as described in \cite{petrache2014next}.

The energy cost of the union of all these cells, for $j$ ranging from $1$ to $J$, is evaluated as in \cite{petrache2014next}, except for the part corresponding to the correction between $\meseq'$ and its average on the cell, which depends on the regularity of $\meseq$ and the size of cell: since Assumption \ref{H5} is more general than the one in \cite{petrache2014next}, this requires some changes.

We show that the sum of these contributions over all the regions $\mathcal{R}$ is negligible compared to $N$ (like all the other energy contributions of this boundary layer). Using Lemma \ref{constvsmoy} (or its analogue over more general shapes than hyperrectangles), we get for each cell $\mathcal{R}$ a term of order
$$
(t_j-t_{j-1})^{\d+1-\gamma} \left\|\meseq'- \( \frac{1}{|\mathcal{R}|} \int_{\mathcal{R}}  \meseq' \right)\right\|^2_{L^\infty}. 
$$
On the one hand, the H\"older control of $\muv$ as in \eqref{reass5b} yields
$$
|\mu_V'- \frac{1}{|\mathcal{R}|} \int_{\mathcal{R}}  \meseq'|\le |t_j-t_{j-1}|^{\min(\alpha,1)} N^{-\min(\alpha,1)/\d},
$$
and since we have seen above that  $t_j-t_{j-1}$ is bounded by $\cC N^{\frac{\alpha}{\d(\alpha+\d)}}$,  we find  that the contribution of each cell $\mathcal{R}$ is bounded by 
\begin{equation}  \label{premiereborne}
N^{\frac{\alpha}{\d(\alpha+\d)}(2\alpha' +\d+1-\gamma)  } N^{-2\alpha'/\d} \le N^{ \frac{\alpha}{\d(\alpha+\d)} ( 2\alpha'+\d+1-\gamma  -2\alpha'  - 2 \frac{\alpha'}{\alpha} \d)   },
\end{equation}
where $\alpha' :=\min(\alpha, 1)$.

On the other hand, the number of such cells is bounded by $\cC \um^{1/\alpha} N$, because they all contribute to a mass at least $1$ of $\meseq'$ and the total mass of this boundary layer is bounded by $\cC \um^{1/\alpha} N$. Therefore, if 
$\d+1-\gamma - 2\d \alpha'/\alpha \le 0$, these contributions will be negligible compared to $N$, as desired. 
In particular, if $\alpha\le 1$, since we always have $\d -1+\gamma=\s\ge 0$, we are done. 

If $\alpha \ge 1$, we can use \eqref{reass5} instead. We then bound the contributions of the cells by 
$$
\cC \sum_{j=1}^J n_j (t_j-t_{j-1})^{\d+3-\gamma} t_{j}^{2(\alpha-1)} N^{-2\alpha/\d}.
$$
Using that $n_j$ is comparable to $ (t_j-t_{j-1})t_{j-1}^{\alpha +\d-\ll-1} N^{\frac{\ll-\alpha}{\d}}$, we bound the previous expression by
$$
\sum_{j=1}^J  (t_j-t_{j-1})^{\d+4-\gamma}     t_{j-1}^{3\alpha+\d-\ll-3   }N^{\frac{\ll-3\alpha}{\d}},
$$
and then, since $t_j-t_{j-1}$ is bounded by $N^{\frac{\alpha}{\d^2} }  t_{j-1}^{-\alpha/\d}$, we bound again the sum by
$$
\sum_{j=1}^J (t_j-t_{j-1})    N^{\frac{\alpha}{\d^2} (\d+3-\gamma) +\frac{\ll-3\alpha}{\d}}  t_{j-1}^{-\alpha (\d+3-\gamma)/\d +3\alpha+\d-\ll-3 }. 
$$ 
This sum is comparable to the integral 
$$
N^{\frac{\alpha}{\d^2} (\d+3-\gamma) +\frac{\ll-3\alpha}{\d}} \int_{t_1}^{T} t^p \, dt, \qquad p= -\alpha(\d+3-\gamma)/\d +3\alpha+\d-\ll-3.$$
If $p > -1$, then we may bound it (using \eqref{choixdeT}) by
$$N^{\frac{\alpha}{\d^2} (\d+3-\gamma) +\frac{\ll-3\alpha}{\d}} T^{p+1} \le \um^{p+1/\alpha} N^{\frac{\alpha}{\d^2} (\d+3-\gamma) +\frac{\ll-3\alpha+p+1 }{\d}}.$$
After some elementary algebra, the  exponent of $N$ turns out to be $1 -\frac{2}{\d} < 1$, hence the contribution in this case is negligible with respect to $N$ any time $p > - 1$.

If $p\le -1$, then the integral is dominated by $t_1^{p+1}$, which in view of $t_1 \geq   N^{\frac{\alpha}{\d(\alpha + \d)}}$ yields a total contribution bounded by 
$$
N^{\frac{\alpha}{\d^2} (\d+3-\gamma) +\frac{\ll-3\alpha}{\d} + \frac{\alpha}{\d (\alpha+\d) } \left( -\alpha (\d+3-\gamma) /\d+ 3\alpha +\d-\ll -2 \right) }.
$$ 
The exponent is 
$$
\frac{\alpha + \d \ll - \gamma \alpha - \alpha \d}{\d (\alpha + \d)}.
$$
Using the fact that $\ll \leq \d -1$ and the expression of $\gamma$ as in \eqref{gs}, we find that the exponent is always $<1$, so the whole contribution is $o(N)$ as needed in that case as well. 
\end{proof}

\bibliographystyle{alpha}
\bibliography{ldpbiblio}

\end{document}